\documentclass[11pt,a4paper]{article}
\usepackage[latin1]{inputenc}
\usepackage{amsmath}
\usepackage{amsfonts}
\usepackage{amssymb}
\usepackage{graphicx}
\usepackage{amsthm}
\usepackage[width=17.10cm, height=25.00cm]{geometry}
\usepackage{color}
\usepackage{pict2e}
\usepackage{epic}

\newtheorem{theorem}{Theorem}[section]
\newtheorem{lemma}[theorem]{Lemma}

\newtheorem{remark}[theorem]{Remark}

\newcommand{\norm}[2]{\|{#1}\|_{{#2}}}
\newcommand{\tnorm}[1]{\|\kern-.4mm| {#1} |\kern-.4mm\|}
\newcommand{\ud}{\,{\rm d}}
\newcommand{\jump}[1]{[\kern-0.6mm [{#1}]\kern-0.6mm]}
\newcommand{\av}[1]{\{\kern-1.5mm \{{#1}\}\kern-1.5mm\}}
\newcommand{\ex}{{\rm e}}
\newcommand{\nx}{{\nabla}_{\! x}}
\newcommand{\nv}{{\nabla}_{\!v}}
\newcommand{\nvx}{{\nabla}_{\! v,x}}

\newcommand{\tltwo}[2]{(\kern-0.6mm ({#1},{#2})\kern-0.6mm)}

\newcommand{\tmesh}{\tau_{n}}

\newcommand{\stnfes}[1]{V^{q_{#1}}_h}

\newcommand{\stfes}{\mathcal{V}^\textbf{q}_{h}}

\newcommand{\ujump}[1]{\lfloor #1\rfloor}

\newcommand{\mbf}[1]{\mathbf{{#1}}}

\newcommand{\dip}[2]{(\!({#1},{#2})\!)}

\newcommand{\Anorm}[1]{|\!|\!| {#1} |\!|\!|}

\newcommand{\bx}{\mbf{x}}
\newcommand{\by}{\mbf{y}}

\newcommand{\bs}{\mbf{s}}

\newcommand{\dlangle}{\langle\kern-0.6mm \langle}
\newcommand{\drangle}{\rangle\kern-0.6mm \rangle}

\author{Zhaonan Dong\thanks{
		1) Inria, 48 rue Barrault, 75647 Paris, France
		and 2) CERMICS, ENPC, Institut Polytechnique de Paris, CNRS, 6 \& 8 avenue B.~Pascal, 77455 Marne-la-Vall\'{e}e, France.   Email: {\tt zhaonan.dong@inria.fr}}
\and Emmanuil H.~Georgoulis\thanks{
	The Maxwell Institute for Mathematical Sciences and Department of Mathematics, Heriot-Watt University, Edinburgh
EH14 4AS, United Kingdom,
	{AND} Department of Mathematics, School of Applied Mathematical and Physical Sciences, National Technical University of Athens, Zografou 15780, Greece,
{AND} Institute of Applied and Computational Mathematics, FORTH, 700 13, Heraklion, Crete, Greece. Email: {\tt E.Georgoulis@hw.ac.uk}
}}

\title{Hypocoercivity-preserving space-time Galerkin methods \\ for kinetic Fokker-Planck equations}
\begin{document}
	\maketitle

	\begin{abstract}
We design and analyse a family of hypocoercivity-preserving fully discrete Galerkin methods for the (inhomogeneous) kinetic Fokker--Planck (kFP) equations, a class of evolution partial differential equations (PDE) with \emph{degenerate} diffusion. The key idea in the method design is to mimic the framework of enhanced quadratic forms of Villani \cite{villani} and, thus, to arrive at a coercive bilinear form in a sufficiently strong exponentially weighted norm, admitting spectral gap/Poincar\'e inequality, despite the degeneracy. The problem is posed as a fourth order in space evolution PDE on the whole space $\mathbb{R}^{d}\times\mathbb{R}^d$. The spatial discretisation employs continuous piecewise polynomial approximation spaces, subordinate to simplicial and/or box-type meshes comprising finite, as well as `infinite' elements. The non-conformity is treated by constructing appropriate numerical fluxes in the spirit of $C^0$-interior penalty ($C^0$-IP) methods for fourth order elliptic problems.  The proof of coercivity of the method requires new trace inverse inequalities for exponentially weighted norms on $d$-dimensional simplicial and box-type domains, and also for prismatic domains with infinite length. We prove weighted trace inverse inequalities for all the above cases of domains and for a variety of exponential weights. These may be of independent interest.  Once coercivity is established for the Galerkin method, the proof of exponential convergence to the equilibrium state follows by employing an exponentially weighted Poincar\'e inequality. A fully discrete extension of the method is also considered
by coupling the non-standard spatial discretisation with an $hp$-version discontinuous Galerkin time-stepping scheme, allowing for arbitrary-order temporal approximations. The exponential convergence to the equilibrium state for the fully discrete scheme is also proven. The proposed methods preserve the total mass and exhibit \emph{provably} exponential convergence to the equilibrium state, a manifestation of `numerical hypocoercivity,' making them well suited for long-time kFP simulations.  A series of numerical experiments is presented to validate the theoretical results and investigate the convergence behaviour of the proposed method.
\end{abstract}

	\section{Introduction}
	 Numerous physical, chemical, biological and social dynamic processes are often characterised by convergence to long-time equilibria. In many important cases the diffusion/dissipation required to arrive to such equilibria is explicitly present in \emph{some} of the spatial directions only. This, somewhat counter-intuitive at first, state of affairs suggests that decay to equilibrium is due to finer hidden structure, which allows for the transport terms in the dynamics to `propagate dissipation' also to the directions which no explicit dissipation exists in the model.

An archetypical such model problem is the classical inhomogeneous kinetic Fok\-ker-Planck equation, reading: find  $f:(0,t_f]:\mathbb{R}^d\times \mathbb{R}^d\to \mathbb{R}$, $d\in\mathbb{N}$, such that
	\begin{equation}\label{eq:FP}
 f_t+v\cdot \nx f-\nabla V(x)\cdot \nv f-\nv \cdot(\nv  f+vf)= 0,\quad \text{ for }(t,v,x)\in(0,t_f]\times \mathbb{R}^d\times \mathbb{R}^d,
	\end{equation}
	with $\nx$, $\nv $ denoting the gradient with respect to the components of $x$ and $v$, respectively, for some potential $V:\mathbb{R}^d\to\mathbb{R}$, 
	subject to the initial condition
	\begin{equation}\label{eq:ic}
	f(0,v,x)=f_0(v,x),\qquad (v,x)\in \mathbb{R}^d\times \mathbb{R}^d.
	\end{equation}
	The solution $f$ is understood as (a multiple of a) probability density function for a particle with velocity $v$ to be in position $x$ at time $t$.
 The external potential $V$ is assumed to satisfy $V\in C^2(\mathbb{R}^d)$ and
	\begin{equation}\label{eq:Vgrowth}
	|\mathcal{H}(V)|_F\le C_0 (1+|\nabla V|),
	\end{equation}
	pointwise for all $x\in\mathbb{R}^d$ for some $C_0>0$, with  $\mathcal{H}(V)$ denoting the Hessian of $V$ and $|\cdot|_F$ the standard Frobenius matrix norm. The potential $V$ confines the particles from escaping to infinity. For the numerical analysis below, we will further quantify $V$ to be of the form $V(x)=\phi(x) + |x|^{2k}$, $k\in\mathbb{N}$, for $\phi$ bounded function.

	Assuming that $f_0\in L_2((1+E)\ud v\ud x)$, where
	$
	L_2((1+E)\ud v\ud x):=\{f: \int f \ud (1+E)\ud v\ud x<+\infty\}$,
	and
	\begin{equation}\label{exponent}
		E(v,x):= V(x)+\frac{1}{2}|v|^2,
	\end{equation} the problem \eqref{eq:FP}, \eqref{eq:ic} has a unique distributional solution \cite[Theorem 7]{villani}; see also \cite{helffer_nier} for the first result in this direction under stronger regularity and growth assumptions on $V$. To avoid notational overload, we do \emph{not} normalise $ \ex^{-E}$ and $f$.

To highlight the challenges arising from the explicit presence of diffusion in the $v$-variables \emph{only}, we work as follows.	Performing the standard substitution $f= u \ex^{-E}$ into \eqref{eq:FP}, we deduce
		\begin{equation}\label{eq:FP_u_classical}
	u_t+v\cdot \nx  u-\nabla V\cdot \nv u=\Delta_v u -v \cdot \nv u, \qquad u(0,\cdot,\cdot)=u_0(\cdot,\cdot),
	\end{equation}
	with $u_0:=f_0\ex^E\in L_2(\mu)$, with $\ud \mu := \ex^{-E(v,x)}\ud v\ud x$; see Section \ref{sec:weak_form} for the formal definitions of the weighted inner product and function spaces. Under these assumptions, \eqref{eq:FP_u_classical} admits a unique distributional solution; see \cite[Theorem 6]{villani} for details.

  In spite of the degeneracy of the diffusion in \eqref{eq:FP_u_classical}, the solution $u$ decays to the equilibrium state $\int u_0\ud \mu$ exponentially fast in time. This is a manifestation of the concept of \emph{hypocoercivity} introduced by Villani in \cite{villani}. Still, employing a straightforward energy argument does not allow for proof of decay to equilibrium in a straightforward fashion, as no evident spectral gap property is apparent. Indeed, observing the identity $\Delta_v u -v \cdot \nv u =\nv \cdot(\nv  u \ex^{-E})\ex^E$, \eqref{eq:FP_u_classical} gives
	\begin{equation}\label{eq:FP_u_exp}
  	u_t+v\cdot \nx u-\nabla V\cdot \nv u-\nv \cdot(\nv  u \ex^{-E})\ex^E= 0,
  \end{equation}
  which, upon testing against $u\ex^{-E}$ and integrating over $\mathbb{R}^d\times \mathbb{R}^d$, gives
  \begin{equation}\label{eq:fp_energy}
\frac{1}{2}\frac{\ud}{\ud t}\norm{u}{L_2(\mu)}^2+\norm{\nv  u}{L_2(\mu)}^2=0,
  \end{equation}
  since $(v\cdot \nx  u-\nabla V \cdot \nv  u,u)_{L_2(\mu)}=0$, for $\ud \mu = \ex^{-E(v,x)}\ud v\ud x$.

  At this point it is useful to note the conservation of mass property of \eqref{eq:FP_u_classical}:
  \begin{equation}\label{eq:fp_cons_mass}
  	\int u\ud \mu= 	\int u_0\ud \mu=:\bar{u},\quad t\in[0,t_f],
  \end{equation}
  for all $t_f>0$, which follows by testing \eqref{eq:FP_u_classical} against $\ex^{-E}$, (i.e., test against the constant function $1$ in the weighted inner product of $L_2(\mu)$,)  and integrating over $\mathbb{R}^d\times\mathbb{R}^d$.  Thus, one would need a spectral gap/Poincar\'e-type inequality of the form
  \begin{equation}\label{eq:PF_gen}
\ \norm{u-\int u\ud \mu }{L_2(\mu)}^2\le \frac{2}{\kappa}\norm{\nv  u}{L_2(\mu)}^2,
  \end{equation}
  for some $\kappa>0$,
  to yield decay to equilibrium. Then, using \eqref{eq:PF_gen} into \eqref{eq:fp_energy} and employing Gr\"onwall Lemma, we would arrive at
\begin{equation}\label{hypco_decay}
 \norm{u(t_f)-\bar{u}}{L_2(\mu)}^2\le \ex^{-\kappa t_f} \norm{u_0-\bar{u}}{L_2(\mu)}^2,
  \end{equation}
  i.e., exponential decay to the equilibrium state $\bar{u}$ as $t_f\to\infty$.  However, \eqref{eq:PF_gen} is \emph{not} available, in general, since $\nv  u$ vanishes for all functions $u(v,x)=w(x)$. The crucial challenge, therefore, is to circumvent the lack of coercivity of the natural energy quadratic form.

  The proof of trend to equilibrium for the kinetic Fokker-Planck equation was first given by H\'erau \& Nier in  \cite{herau_nier} upon making the crucial observation that certain entropies admitting mixed derivatives $f_{vx}$ give rise to full gradients in certain $\mu$-weighted Sobolev spaces. Some related ideas  can also be found in the work of Eckmann \& Hairer  \cite{eckmann_hairer}, whereby the spectral properties of certain hypoelliptic operators involving degenerate diffusions are studied and by Mouhot \& Neumann \cite{mouhot_neumann} in the study of kinetic models with integral-type collision operators, among others. The key observation of using quadratic functionals involving mixed derivatives was elevated to a general framework in proving decay to equilibrium for kinetic equations by Villani \cite{villani} via the introduction of the concept of \emph{hypocoercivity}. Roughly speaking, hypocoercivity is the property of certain degenerate evolution operators to yield dissipation also in the directions where \emph{no} diffusion is explicitly present. 
  By employing non-standard quadratic forms, Villani proved coercivity in sufficiently strong norms, for which Poincar\'e inequalities apply.   The theory of hypocoercivity has been applied to various settings in recent years; see \cite{villani} for a non-exhaustive sample. Further approaches in the context of hypocoercivity have appeared since, starting with the seminal work of Dolbeault, Mouhot \& Schmeiser \cite{DMS_TAMS}.

  The importance of designing computational methods for kinetic equations cannot be overestimated, due to their abundance of in modern statistical physics, which has sprawled into other modelling areas also \cite{Risken,pareschi_toscani} in recent years.  Monte-Carlo (MC) approaches have enjoyed the attention of the computational physics and chemistry community over the last 50 years or so \cite{pareschi_toscani}. The slow and statistical-in-nature convergence rate of MC simulations and their structural difficulty in retaining conservation properties of the solutions at the discrete level, have lead researchers to seek alternatives; see \cite{dimarco_pareschi} and the references therein, including finite differences, spectral methods, and Galerkin/finite element approaches. In particular, discontinuous Galerkin (dG) methods have been used for the approximation of Fokker-Planck; see, the influential work \cite{newpaper} for a DG method for a class of equations including Fokker-Planck.

  Despite their important scientific value, the aforementioned works are not concerned with preserving hypocoercivity-inducing structures at the discrete level.  Nonetheless, preserving such hypocoercivity properties appears to be important to guarantee quantitatively correct simulations, especially for long-time simulations. To see this, we write \eqref{eq:FP} in the abbreviated form $f_t+\mathcal{L}f=0$ and we consider a discretisation method
  \[
   \partial f_h +\mathcal{L}_hf_h = 0,
   \] of \eqref{eq:FP}. Let $e:=f-f_h$ be the discretisation error. Subtracting the method from the equation gives
   \[
    \partial e +\mathcal{L}_he = \mathcal{R}
    \] with $\mathcal{R}:=\partial f-f_t+(\mathcal{L}_h-\mathcal{L})f $ the discretisation residual.
  If $\mathcal{L}_h$ does not preserve hypocoercivity, we cannot infer a spectral gap for $\mathcal{L}_h$. Thus, we can only estimate the error via classical energy argument, for which classical (discrete) Gr\"onwall-type estimates give bounds of the form
  \begin{equation}\label{gronwall}
  \norm{e(t)}{L_2(\mu)}^2\le e^{\delta t} \big(\delta^{-1}\norm{\mathcal{R}}{L_2(I;L_2(\mu))}^2+\norm{e(0)}{L_2(\mu)}^2\big).
  \end{equation}
  for any $\delta>0$ and $I=(0,t)$; such estimates hold for number of classical choices of temporal discretisation $\partial$. The same argument also holds for the case of \eqref{eq:FP} with non-homogeneous right-hand sides, which can be important in applications \cite{pareschi_toscani}. Hence, not preserving the hypocoercivity structure of $\mathcal{L}$ at the discrete level, may lead to exponentially diminishing performance or, at least, estimation of accuracy, with respect to the final time. Given the importance of kinetic simulations in various high-risk (e.g., nuclear, space, etc.) industries, the development of numerical methods with realistic error behaviour is of significance.

  To address this challenge, it is desirable to port hypocoercivity ideas to numerical methods. To date, there exist extremely few, yet quite inspiring, works investigating the hypocoercivity properties of finite difference operators.  By applying the abstract semigroup result of Villani \cite[Theorem 18]{villani}, Porretta \& Zuazua \cite{porretta_zuazua} prove the hypocoercivity of classical finite difference operators discretising the related Kolmogorov's equation. This was later extended by Dujardin, Herau, \& Lafitte \cite{dujardin_herau_lafitte} to proof of decay to equilibrium for finite difference approximations for the Fokker-Planck equation \eqref{eq:FP}, \eqref{eq:ic} for the case $d=1$. Further, hypocoercivity structures for finite volume discretisations of various kinetic equations for $d$ are presented in \cite{BessemoulinHerdaRey2024,BessemoulinLadineRey2025,BlausteinFilbet24}. We also note the related works \cite{foster_loheac_tran} and \cite{besse_filbet} which, nonetheless, do not focus any proof of hypocoercivity at the fully discrete level.

  The first proof of a non-standard hypocoercivity-compatible Galerkin finite element method for the Kolmogorov equation given \cite{georgoulis21}. The key idea in \cite{georgoulis21} is the variational interpretation of the hypocoercivity structure as a degenerate 4th-order PDE problem in weak form. The respective numerical method, based on this weak form, employs $C^0$-Lagrange finite elements of any order in space. 
  We also mention the recent result \cite{SUPGhypo25}, whereby a carefully constructed streamline-upwinded Galerkin method is shown to admit a mesh-dependent numerical hypocoercivity property for the Kolmogorov equation, followed by a corresponding result for the classical space-time DG method for the Kolmogorov problem in \cite{dong:hal-05080993}.

In this work, we are concerned with designing and analysing the first provably hypocoercivity-preserving fully discrete Galerkin method for the kinetic Fokker-Planck (kFP) equation \eqref{eq:FP}, \eqref{eq:ic}. This is achieved by first constructing a special variational form of of the kFP equation in the spirit of \cite{georgoulis21} and, subsequently, by addressing  the additional challenges posed by 1) the lack of coercivity  of the resulting bilinear form due to the presence of the confining potential $V$,  2) by the potential lack of $C^1$-regularity of the Galerkin approximations for the 3rd and 4th order partial derivatives appearing in the special variational form, and 3) the problem posed on the whole of $\mathbb{R}^d\times \mathbb{R}^d$. To address the first challenge, we extend a technical result by Villani from \cite{villani} estimating the Hessian of the potential $V$ (see Lemma \ref{lem:villani_control} below).

The second challenge is considerably more involved and requires a number of technical developments. First, the 3rd order derivatives are treated via a non-standard set of numerical fluxes, which are able to transport information to the correct directions without introducing additional numerical diffusion and, crucially, are also compatible with respect to the algebraic structure that we seek to preserve. Furthermore, to address the discretization of the 4th-order partial derivatives in $v$ appearing in the modified variational form, we employ a non-conforming $C^0$-interior penalty-type set of numerical fluxes. The proof of stability of such discretisations require the availability of, so-called, trace inverse inequalities bounding norms of polynomials on the boundary of a domain (element) by the same norms on the domain itself. To that end, we prove new trace inverse inequalities in $L_2$-norms with a variety of exponentially weighted measures. Moreover, we also prove trace inverse inequalities with exponential weights on \emph{unbounded} domains also. The proof of these estimates hinges on new, to the best of our knowledge, inverse estimates in one dimension. The above may be of independent interest.

The third challenge, i.e., the problem posed on the whole $\mathbb{R}^d\times \mathbb{R}^d$ for the spatial variables is addressed by considering piecewise polynomials spaces subordinate to subdivisions (meshes) of $\mathbb{R}^d\times \mathbb{R}^d$ comprising both \emph{bounded as well as unbounded} subdomains; we refer to Figure \ref{fig:meshes} for an illustration. In particular, in regions of significant evolution, typical finite element meshes are used, which are further extended to the whole space by `infinite' elements.  Once coercivity is established,  for the Galerkin method, the proof of exponential convergence to the equilibrium state follows by employing an exponentially weighted Poincar\'e inequality.

The spatial discretisation is complemented by an $hp$-version discontinuous Galer\-kin time-stepping method, allowing for arbitrary order temporal approximations and facilitating the design of local time-stepping. Also, we discuss briefly the challenge of high-dimensionality when $d\ge2$.

The remainder of this work is structured as follows. In Section~\ref{sec:weak_form}, we introduce a nonstandard weak formulation of the model problem, before proving the hypocoercivity property for the PDE in Section~\ref{sec: hypocoecivity}. In Section~\ref{sec: Numerical methods}, we introduce a $C^0$-interior penalty Galerkin method for the spatial discretization. Section~\ref{sec: Inverse estimates with exponential weights} is devoted to the derivation of novel inverse trace inequalities with exponential weights for bounded and unbounded domains. These inequalities are then employed in Section~\ref{sec: Numerical hypocoercivity} to establish numerical hypocoercivity for the spatially semidiscrete scheme. In Section~\ref{sec: A fully discrete scheme}, we introduce a discontinuous Galerkin time-stepping method and establish numerical hypocoercivity for the fully discrete scheme. Finally, in Section~\ref{sec: Numerical examples} we provide some numerical experiments showcasing the practical relevance of the proposed method.

	\section{A non-standard weak form}\label{sec:weak_form}
	In the spirit of the seminal work of H\'erau \& Nier \cite{herau_nier}, which led to the celebrated general theory of hypocoercivity by Villani \cite{villani}, we will construct a weak form for \eqref{eq:FP_u_classical}, giving rise to a quadratic form involving norms of $\nx$ and of the mixed derivatives $\nv \nx^T u$.

Consider the equilibrium measure
	$
	\ud \mu := \ex^{-E(v,x)}\ud v\ud x
	$.
	We denote by $L_2(\mu)\equiv H^0(\mu)$ and, in general, $H^r(\mu)$, $r=0,1,2,\dots$ the weighted Hilbertian Sobolev spaces	\[
	H^r(\mu):=\Big\{w:\mathbb{R}^{d}\times \mathbb{R}^{d}\to\mathbb{R}:\norm{w}{H^{r}(\mu)}:=\Big(\int \big( \sum_{s=0}^r|\nvx^{\,s}w|^2\big)\ud \mu\Big)^{1/2}\Big\},
	\]
	with $\nvx :=(\nv^T ,\nx^T )^T$ and the integration taking place over $\mathbb{R}^d\times\mathbb{R}^d$; the respective inner product is denoted by $(\cdot,\cdot)_{H^r(\mu)}$ and defined by polarisation. Whenever the integrand is a tensor, by $|\cdot|$ we mean its Frobenius norm; for instance
	$
	\norm{\nv\nx^Tw}{L_2(\mu)}^2 = \sum_{i,j=1}^d\norm{w_{x_i,v_j}}{L_2(\mu)}^2,
	$
	and so on.

	In what follows, we prefer to work with \eqref{eq:FP_u_exp}, as it facilitates the explicit imposition of the decay of the solution at infinity.
	We begin by setting $\mathcal{V}:=(- \nabla V^T , v^T)^T$ for brevity.
	To that end, we first test \eqref{eq:FP_u_exp} against $w\ex^{-E}$ for $w\in H^1(\mu)$, to deduce
	\begin{equation}\label{eq:hypo_one}
		(u_t,w)_{L_2(\mu)}+(\mathcal{V}\cdot \nvx u,w)_{L_2(\mu)}+(\nv  u,\nv  w)_{L_2(\mu)}= 0.
	\end{equation}
	Next, we differentiate \eqref{eq:FP_u_classical} with respect to $\nvx$
	we have
\[
\nvx  u_t+ (\mathcal{V}\cdot\nvx ) \nvx  u
+
\Big(\begin{array}{cc}
I  & I \\
-\mathcal{H}(V) & 0
\end{array}\Big)\nvx  u
-\sum_{j=1}^d\nvx   u_{v_jv_j} +(v \cdot \nv )\nvx  u= 0,
\]
with $\mathcal{H}(V)$ denoting the Hessian matrix of $V$ with respect to $x$, and $v=(v_1,\dots,v_d)^T$ and $I\in\mathbb{R}^{d\times d}$ denoting the identity matrix. Testing now against $\mathcal{A}\nvx  w\ex^{-E}$, with $\mathcal{A}\in\mathbb{R}^{2d\times 2d}$ given by
	\[
	\mathcal{A}=\left(
	\begin{array}{cc}
	\alpha I & \beta I \\
	 \beta I & \gamma I
	 \end{array}
	\right),
	\]
	for $\alpha,\beta,\gamma>0$ to be determined below, we deduce
	\begin{equation}\label{eq:hypo_two}
	\begin{aligned}
	&0=(\nvx  u_t, \mathcal{A}\nvx  w)_{L_2(\mu)}+  ( (\mathcal{V}\cdot\nvx)\nvx  u, \mathcal{A}\nvx  w)_{L_2(\mu)}
\\
	+&
(	\left(
	\begin{array}{cc}
	\alpha I -\beta \mathcal{H}(V)& \alpha  I \\
	\beta I -\gamma \mathcal{H}(V) & \beta I
	\end{array}
	\right)\nvx  u,\nvx  w)_{L_2(\mu)}
+\sum_{j=1}^d(\nvx   u_{v_j} , \mathcal{A}\nvx  w_{v_j})_{L_2(\mu)},
	\end{aligned}
	\end{equation}
	upon observing the identity
	\[
	\begin{aligned}
	-\sum_{j=1}^d(\nvx   u_{v_jv_j} , \mathcal{A}\nvx  w)_{L_2(\mu)} \! =& \sum_{j=1}^d(\nvx   u_{v_j} , \mathcal{A}\nvx  w_{v_j})_{L_2(\mu)}\\
	&-((v \cdot \nv )\nvx  u, \mathcal{A}\nvx  w)_{L_2(\mu)}.
	\end{aligned}
	\]
	Adding \eqref{eq:hypo_one} to \eqref{eq:hypo_two} yields the non-standard weak form for \eqref{eq:FP_u_classical}: for a.e. $t\in(0,t_f]$, find $u\in H^2(\mu)$, such that
	\begin{equation}\label{eq:hypo_weak_form}
	(u_t, w)_{L_2(\mu)}+(\nvx  u_t, \mathcal{A}\nvx  w)_{L_2(\mu)}+a(u,w) = 0,
	\end{equation}
	for
	\[
	\begin{aligned}
	a(u,w):=&\  (\mathcal{V}\cdot \nvx  u,w)_{L_2(\mu)}+( (\mathcal{V}\cdot \nvx ) \nvx  u, \mathcal{A}\nvx  w)_{L_2(\mu)}
	\\
	&	+
	(		\mathcal{B}\nvx  u,\nvx  w)_{L_2(\mu)}
	+\sum_{j=1}^d(\nvx   u_{v_j} , \mathcal{A}\nvx  w_{v_j})_{L_2(\mu)}
	\end{aligned}
	\]
	with
	$
	\mathcal{B}\equiv \mathcal{B}(V,\mathcal{A}):=	\Big(
	\begin{array}{cc}
		(1+\alpha) I-\beta\mathcal{H}(V) & \alpha  I \\
		\beta I-\gamma\mathcal{H}(V) & \beta I
	\end{array}
	\Big).
	$

\section{Hypocoercivity}\label{sec: hypocoecivity}
Although \eqref{eq:hypo_weak_form} may appear cumbersome at first sight, it has the crucial property of being coercive with respect to a suitable norm for certain choices of the constants $\alpha,\beta$ and $\gamma$. This norm, in turn, is such that it admits a spectral gap/Poincar\'e inequality; a manifestation of hypocoercivity \cite{villani}.

Crucially, the sign of the terms involving $\mathcal{H}(V)$ in $a(\cdot,\cdot)$ is unclear. Nonetheless, it is possible to show that, under the growth condition \eqref{eq:Vgrowth}, the following bound holds. This will allow to control the sign-indefinite terms involving $\mathcal{H}(V)$ and, eventually, conclude the proof of (hypo)coercivity.
\begin{lemma}[{\cite[Lemma A.24]{villani}}]\label{lem:villani_control}
Let $g\in H^1(\mu)$ and energy $E(v,x)=V(x)+\frac{1}{2} |v|^2$, with $V\in C^2(\mathbb{R}^d)$ satisfying \eqref{eq:Vgrowth}. Then, for $c_1:=16C_0^2(1+\sqrt{2dC_0})^2$, and for $i=1,\dots,d$, we have the estimate
\begin{equation}\label{eq:villani_control}
\norm{\mathcal{H}(V) g}{L_2(\mu)}^2  \le c_1 \big(\norm{g}{L_2(\mu)}^2 +\norm{\nx g}{L_2(\mu)}^2\big).
\end{equation}
\end{lemma}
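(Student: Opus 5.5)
The plan is to reduce the estimate to a weighted bound on $|\nabla V|\, g$ and then obtain that bound by integration by parts against the weight $\ex^{-V}$. The velocity variable plays no role: $\ud\mu=\ex^{-|v|^2/2}\ud v\,\ex^{-V(x)}\ud x$, and neither $\mathcal{H}(V)$ nor $\nx$ acts on $v$. It therefore suffices to prove the inequality for $\ud\nu:=\ex^{-V}\ud x$ on $\mathbb{R}^d$, for each fixed $v$, and then integrate in $v$ against $\ex^{-|v|^2/2}$.

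\textbf{Step 1 (reduction to the gradient).} By \eqref{eq:Vgrowth},
\[
|\mathcal{H}(V)g|^2\le|\mathcal{H}(V)|_F^2|g|^2\le 2C_0^2(1+|\nabla V|^2)|g|^2,
\]
so it is enough to show
\[
\int|\nabla V|^2g^2\ud\nu\le C\Big(\int g^2\ud\nu+\int|\nx g|^2\ud\nu\Big).
\]

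\textbf{Step 2 (integration by parts).} The identity $\nabla V\ex^{-V}=-\nabla(\ex^{-V})$ gives, for $g$ smooth and compactly supported (the general case follows by density in $H^1(\mu)$),
\[
\int|\nabla V|^2g^2\ud\nu=\int g^2\Delta V\ud\nu+2\int g\,\nabla g\cdot\nabla V\ud\nu .
\]

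\textbf{Step 3 (bounding the right-hand side).} I bound $|\Delta V|\le\sqrt d\,|\mathcal{H}(V)|_F\le\sqrt d\,C_0(1+|\nabla V|)$. Young's inequality then absorbs the linear-in-$|\nabla V|$ terms into the left-hand side:
\[
\sqrt d\,C_0|\nabla V|g^2\le\tfrac14|\nabla V|^2g^2+dC_0^2g^2,
\qquad
2|g||\nabla g||\nabla V|\le\tfrac14|\nabla V|^2g^2+4|\nabla g|^2 .
\]
After absorption,
\[
\int|\nabla V|^2g^2\ud\nu\le C(d,C_0)\big(\norm{g}{L_2(\nu)}^2+\norm{\nx g}{L_2(\nu)}^2\big).
\]
Combining this with Step 1 and integrating in $v$ yields \eqref{eq:villani_control}. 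Tracking the Young's parameters more carefully, for instance by optimising the splitting as in Villani's Lemma A.24, should reproduce the stated constant $c_1=16C_0^2(1+\sqrt{2dC_0})^2$. I would not insist on matching it exactly, since any explicit constant suffices for the later coercivity argument.

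\textbf{Main obstacle.} The delicate point is justifying the integration by parts and the absorption. One must know a priori that $\int|\nabla V|^2g^2\ud\nu<\infty$, otherwise subtracting it is illegitimate, and the boundary terms must vanish. I would first run the argument for $g\in C_c^\infty$, where everything is finite, and then extend the resulting inequality to $H^1(\mu)$ by density together with Fatou's lemma on the left-hand side. This requires $C_c^\infty$ to be dense in $H^1(\mu)$, which holds under the standing assumption $V\in C^2$ via standard cutoff arguments.
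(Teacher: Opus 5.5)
Your argument is essentially the paper's. The paper only cites Villani for this lemma, but its proof of the broken version, Lemma \ref{lem:villani_control_discrete}, is exactly your Steps 1--3 (for $g\in H^1(\mu)$ the skeleton term is simply absent). Your two Young splittings reproduce \eqref{magic_growth} verbatim: $\norm{|\nabla V|g}{L_2(\mu)}^2\le(2\sqrt d\,C_0+2dC_0^2)\norm{g}{L_2(\mu)}^2+8\norm{\nx g}{L_2(\mu)}^2$. Your density/Fatou justification is a useful addition that the paper does not spell out. For vector-valued $g$ (as in the application $g=\nv u$), apply the scalar bound componentwise.

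The one thing to correct is your remark on the constant. Combined with Step 1, your route gives $\max\{2C_0^2(1+2\sqrt d\,C_0+2dC_0^2),\,16C_0^2\}$. This is the constant $C_1$ of Lemma \ref{lem:villani_control_discrete} (up to a dropped factor $C_0$ in the paper's middle term), and it is the one the paper actually uses in the coercivity proofs. Retuning the Young parameters will not give $c_1=16C_0^2(1+\sqrt{2dC_0})^2\approx 32dC_0^3$. Step 1 costs a factor $C_0^2$, and absorbing $\sqrt d\,C_0\int|\nabla V|g^2\ud\mu$ costs a factor of order $dC_0^2$, so the coefficient of $\norm{g}{L_2(\mu)}^2$ is of order $dC_0^4$.

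This is not a defect of the method. Take $d=1$ and let $V'$ sweep back and forth between $\pm C_0$ at the maximal admissible rate $|V''|=C_0(1+|V'|)$ over many periods, so that $V$ itself oscillates only by $O(1)$, and confine outside. Then $g\equiv1$ gives $\norm{V''}{L_2(\mu)}^2/\norm{1}{L_2(\mu)}^2\gtrsim C_0^4/\ln C_0$, so the inequality with the stated $c_1$ fails for large $C_0$. This is presumably a misprint, e.g.\ for $16C_0^2(1+\sqrt{2d}\,C_0)^2$, which does dominate your constant. Your proof therefore establishes the estimate with a correct explicit constant, which is all that is used downstream, but you should not claim the stated $c_1$.
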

We are now ready to show the (hypo)coercivity of $a(\cdot,\cdot)$.

\begin{lemma}\label{lem:hypoco}
For  any $u\in H^3(\mu)$, there exist choices of $\alpha,\beta,\gamma>0$ with $\alpha\gamma-\beta^2\ge 0$ (i.e., $\mathcal{A}$ is non-negative definite,) so that we have
		\[
	\begin{aligned}
	a(u,u)\ge
&\ c_{hc,1}\big( \norm{\nv u}{L_2(\mu)}^2 +\beta \norm{\nx u}{L_2(\mu)}^2\big)\\
& +c_{hc,2}\alpha \norm{\nv\nv^T u}{L_2(\mu)}^2 + c_{hc,3}\gamma\norm{\nx\nv^T u}{L_2(\mu)}^2,
\end{aligned}
\]
for constants $c_{hc,i}>0$ $i=1,2$ depending only on $c_1$ for each choice of $\alpha,\beta$ and $\gamma$.
\end{lemma}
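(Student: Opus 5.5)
We set $w=u$ in $a(\cdot,\cdot)$ and treat each term separately. The transport terms should vanish by skew-symmetry. The operator $\mathcal{V}\cdot\nvx = v\cdot\nx-\nabla V\cdot\nv$ is antisymmetric in $L_2(\mu)$: its divergence-free structure gives $\nvx\cdot\mathcal{V}=0$, and $\mathcal{V}\cdot\nvx E=0$. Hence $(\mathcal{V}\cdot\nvx u,u)_{L_2(\mu)}=0$. Likewise, since $\mathcal{A}$ is constant and symmetric, $((\mathcal{V}\cdot\nvx)\nvx u,\mathcal{A}\nvx u)_{L_2(\mu)}=\frac12\int \mathcal{V}\cdot\nvx\big(\nvx u^T\mathcal{A}\nvx u\big)\ud\mu=0$. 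The regularity $u\in H^3(\mu)$ is enough to justify these integrations by parts, with decay at infinity handled through the weight.

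This leaves two pieces. The first is $\sum_j(\nvx u_{v_j},\mathcal{A}\nvx u_{v_j})_{L_2(\mu)}$, which is nonnegative because $\mathcal{A}\succeq0$. Expanding it gives
\[
\alpha\norm{\nv\nv^Tu}{L_2(\mu)}^2+2\beta(\nv\nv^T u,\nx\nv^Tu)_{L_2(\mu)}+\gamma\norm{\nx\nv^Tu}{L_2(\mu)}^2.
\]
Requiring $\beta^2\le\alpha\gamma/4$ (a strict version of $\alpha\gamma-\beta^2\ge0$), Young's inequality bounds this from below by $\frac{\alpha}{2}\norm{\nv\nv^Tu}{}^2+\frac{\gamma}{2}\norm{\nx\nv^Tu}{}^2$.

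The second piece is $(\mathcal{B}\nvx u,\nvx u)_{L_2(\mu)}$, which writes out as
\[
(1+\alpha)\norm{\nv u}{}^2+\beta\norm{\nx u}{}^2+(\alpha+\beta)(\nx u,\nv u)-\beta(\mathcal{H}(V)\nv u,\nv u)-\gamma(\mathcal{H}(V)\nv u,\nx u).
\]
The first two terms are the good ones. The cross term is handled by Young's inequality: $(\alpha+\beta)|(\nx u,\nv u)|\le\frac{\beta}{4}\norm{\nx u}{}^2+\frac{(\alpha+\beta)^2}{\beta}\norm{\nv u}{}^2$.

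The Hessian terms are the main obstacle, and Lemma~\ref{lem:villani_control} is what controls them. Applying it componentwise with $g=u_{v_j}$ gives
\[
\norm{\mathcal{H}(V)\nv u}{}^2\le c_1\big(\norm{\nv u}{}^2+\norm{\nx\nv^Tu}{}^2\big).
\]
Using Cauchy--Schwarz and Young, we then get $\gamma|(\mathcal{H}(V)\nv u,\nx u)|\le\frac{\beta}{4}\norm{\nx u}{}^2+\frac{\gamma^2c_1}{\beta}\big(\norm{\nv u}{}^2+\norm{\nx\nv^Tu}{}^2\big)$. Similarly, $\beta|(\mathcal{H}(V)\nv u,\nv u)|\le\frac{\beta}{2}\big(\norm{\nv u}{}^2+c_1\norm{\nv u}{}^2+c_1\norm{\nx\nv^Tu}{}^2\big)$.

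It remains to choose the parameters so that every bad term is absorbed. The constraints are:
\begin{itemize}
\item the coefficient of $\norm{\nv u}{}^2$ must stay positive, i.e.\ $\frac{(\alpha+\beta)^2}{\beta}+\frac{\gamma^2c_1}{\beta}+\frac{\beta(1+c_1)}{2}\le\frac12$;
\item the mixed-derivative contributions must be absorbed, i.e.\ $\frac{\gamma^2c_1}{\beta}+\frac{\beta c_1}{2}\le\frac{\gamma}{4}$;
\item $\beta^2\le\alpha\gamma/4$.
\end{itemize}
To see that these are compatible, take $\beta=\varepsilon$, $\alpha=\varepsilon^{3/2}$ and $\gamma=\varepsilon^{1/2}/K$ for a large constant $K$.
\begin{itemize}
\item Positivity of $\nv u$: $(\alpha+\beta)^2/\beta\sim\varepsilon$, $\gamma^2c_1/\beta=c_1/K^2$ and $\beta(1+c_1)\sim\varepsilon$, so all are small once $K$ is large and $\varepsilon$ is small.
\item Mixed derivatives: the requirement becomes $c_1/(K^2\varepsilon^{1/2})+c_1\varepsilon/2\le\varepsilon^{1/2}/(4K)$, which needs $K\gtrsim c_1$ and $\varepsilon\lesssim1/(Kc_1)$.
\item Matrix constraint: $\beta^2=\varepsilon^2\le\varepsilon^2/(4K)$ fails, so $\alpha$ needs a smaller $\varepsilon$-power. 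Taking $\alpha=\varepsilon^{5/4}$ gives $\alpha\gamma\sim\varepsilon^{7/4}/K\gg\varepsilon^2$, and $(\alpha+\beta)^2/\beta\sim\varepsilon$ still holds.
\end{itemize}
With these choices $\varepsilon$ depends only on $c_1$. The final inequality holds with, for example, $c_{hc,1}=1/4$ and $c_{hc,2},c_{hc,3}$ positive constants (such as $1/4$ and $1/8$). The remaining bookkeeping is to confirm that the leftover multiple of $\norm{\nx\nv^Tu}{}^2$ is a fixed fraction of $\gamma$.
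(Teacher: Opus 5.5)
\textbf{There is a genuine gap: your three constraints on $\alpha,\beta,\gamma$ cannot hold simultaneously unless $c_1$ is small.} Your decomposition is the paper's: skew-symmetry of $\mathcal{V}\cdot\nvx$ removes both transport terms, Lemma~\ref{lem:villani_control} applied to $\nv u$ controls the two Hessian terms, and Young's inequality handles the rest. Each individual estimate you write is correct; the failure is in the parameter selection.

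\emph{Why the constraints are infeasible.} Set $r=\gamma/\beta$. Your second constraint, $\frac{\gamma^2c_1}{\beta}+\frac{\beta c_1}{2}\le\frac{\gamma}{4}$, becomes $c_1\big(r^2+\frac12\big)\le\frac r4$. This has a solution only if $c_1\le 1/(4\sqrt2)$. But $c_1=16C_0^2(1+\sqrt{2dC_0})^2\ge 16$ whenever $C_0\ge1$, which is already the case for the Gaussian potential.

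\emph{Where the verification goes wrong.} With $\beta=\varepsilon$ and $\gamma=\varepsilon^{1/2}/K$, the check of this constraint contains a slip: $\gamma^2c_1/\beta=c_1/K^2$, not $c_1/(K^2\varepsilon^{1/2})$. Absorbing it into $\gamma/4$ then requires $K\ge 4c_1\varepsilon^{-1/2}$, not $K\gtrsim c_1$. With that $K$ you get $\gamma\le\beta/(4c_1)$, and two things break:
\begin{itemize}
\item the term $c_1\beta/2$ can no longer be absorbed into $\gamma/4$;
\item the matrix condition $\beta^2\le\alpha\gamma/4$ forces $\alpha\ge 16c_1\beta$, contradicting $\alpha=\varepsilon^{5/4}\ll\beta$.
\end{itemize}
So the ordering $\gamma\gg\beta\gg\alpha$ is impossible in this framework. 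The term $\gamma(\mathcal{H}(V)\nv u,\nx u)_{L_2(\mu)}$ forces $\gamma\lesssim\beta/c_1$, and then $\mathcal{A}\succeq0$ forces $\alpha\gtrsim c_1\beta$.

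\emph{The missing step is a weighted Young inequality for $\beta(\mathcal{H}(V)\nv u,\nv u)_{L_2(\mu)}$.} The available coefficient of $\norm{\nv u}{L_2(\mu)}^2$ is of order one, so write
$\beta|(\mathcal{H}(V)\nv u,\nv u)_{L_2(\mu)}|\le\frac14\norm{\nv u}{L_2(\mu)}^2+\beta^2\norm{\mathcal{H}(V)\nv u}{L_2(\mu)}^2$.
This term then leaks only $c_1\beta^2$, instead of $c_1\beta/2$, into $\norm{\nx\nv^Tu}{L_2(\mu)}^2$; that is the role of $\epsilon_1$ in the paper.

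With this change and the reversed hierarchy, your argument closes. For example, take $\gamma=\beta/(4c_1)$ and $\alpha=32c_1\beta$:
\begin{itemize}
\item $\beta^2\le\alpha\gamma/4$ holds;
\item the coefficient of $\norm{\nx\nv^Tu}{L_2(\mu)}^2$ is at least $\frac\gamma4-c_1\beta^2\ge\frac\gamma8$ for $\beta\le 1/(32c_1^2)$;
\item the coefficient of $\norm{\nv u}{L_2(\mu)}^2$ is at least $\frac12$ once $\beta$ is small enough in terms of $c_1$;
\item the coefficients $\frac\beta2$ and $\frac\alpha2$ in front of $\norm{\nx u}{L_2(\mu)}^2$ and $\norm{\nv\nv^Tu}{L_2(\mu)}^2$ are unchanged.
\end{itemize}
The paper makes the analogous choice $\beta=\alpha^2/2$, $\gamma=2\alpha^3/3$ with $\alpha$ small, i.e.\ the ordering $\alpha\gg\beta\gg\gamma$.
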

\begin{proof} A proof to this effect can be found in \cite{villani}, presented in an abstract and generic operator setting. We prefer to prove the result directly for the current setting of the Fokker-Planck equation, to track the explicit dependence of the constants on the matrix $\mathcal{A}$ and, crucially, to highlight the interplay between differential operator commutations in the proof.

	We treat each terms in $a(\cdot,\cdot)$. As before, we have
	$
	(\mathcal{V}\cdot \nvx  u,u)_{L_2(\mu)}=0$,
	and also,
	\[
( (\mathcal{V}\cdot \nvx) \nvx  u, \mathcal{A}\nvx  u)_{L_2(\mu)} =	\frac{1}{2}\int (\mathcal{V}\cdot \nvx)|\sqrt{\mathcal{A}}\nvx  u|^2\ud \mu=0,
	\]
	since $\mathcal{A}$ is non-negative definite.

	Now, using Lemma \ref{lem:villani_control}, we deduce
	\begin{equation}\label{eq:HV_one}
	\beta (\mathcal{H}(V)\nv  u, \nv  u)_{L_2(\mu)}\le \frac{\beta^2\epsilon_1 C_1+\epsilon_1^{-1}}{2}\norm{\nv  u}{L_2(\mu)}^2 +\frac{\beta^2\epsilon_1 C_1}{2}\norm{\nx \nv^T  u}{L_2(\mu)}^2,
	\end{equation}
for any $\epsilon_1>0$,	noting the identity $\sum_{j=1}^d\norm{\nx  u_{v_j}}{L_2(\mu)}^2 = \norm{\nx \nv^T  u}{L_2(\mu)}^2 $. Similarly,
	\begin{equation}\label{eq:HV_two}
\gamma	(\mathcal{H}(V)\nv  u, \nx  u)_{L_2(\mu)} \le \frac{\gamma^2\epsilon_2 C_1}{2\beta}\big(\norm{\nv  u}{L_2(\mu)}^2+\norm{\nx \nv^T  u}{L_2(\mu)}^2\big) +\frac{\beta}{2\epsilon_2}\norm{\nx  u}{L_2(\mu)}^2 ,
	\end{equation}
	for any $\epsilon_2>0$.  Moreover, for any $\epsilon_3>0$, we have
\[
	\begin{aligned}
	\sum_{j=1}^d\!\norm{\sqrt{\mathcal{A}}\nvx   u_{v_j}}{L_2(\mu)}^2 \!=&\,
	 \alpha \norm{\nv\nv^T u}{L_2(\mu)}^2\!+\!2\beta\int \! \nv \nv^T  u: \nx\nv^T   u \ud \mu\!+\! \gamma \norm{\nx\nv^T u}{L_2(\mu)}^2\\
	 \ge&\ \alpha (1-\epsilon_3) \norm{\nv\nv^T u}{L_2(\mu)}^2 + \Big(\gamma-\frac{\beta^2}{\alpha\epsilon_3}\Big)\norm{\nx\nv^T u}{L_2(\mu)}^2.
	 \end{aligned}
\]
	Finally, for any $\epsilon_4>0$, we have
	\[
	\begin{aligned}
		(	\Big(
	\begin{array}{cc}
		(1+\alpha) I & \alpha  I \\
		\beta I & \beta I
	\end{array}
	\Big)\nvx  u,\nvx  w)_{L_2(\mu)} 
	\ge&\  \Big( 1+\alpha - \frac{\epsilon_4(\alpha+\beta)^2}{2\beta }\Big) \norm{\nv u} {L_2(\mu)}^2\\
	&  + \beta\big(1-(2\epsilon_4)^{-1}\big) \norm{\nx u} {L_2(\mu)}^2.
	\end{aligned}
	\]
	Combining the above, we deduce
			\[
	\begin{aligned}
	a(u,u)\ge\
	&\Big(1+\alpha-\frac{\beta^2\epsilon_1 C_1+\epsilon_1^{-1}}{2}-\frac{\gamma^2\epsilon_2 C_1}{2\beta}-\frac{\epsilon_4(\alpha+\beta)^2}{2\beta}\Big)\norm{\nv u}{L_2(\mu)}^2 \\
	&\ +\beta\Big(1-(2\epsilon_2)^{-1}-(2\epsilon_4)^{-1}\Big)\norm{\nx u}{L_2(\mu)}^2+\alpha (1-\epsilon_3) \norm{\nv\nv^T u}{L_2(\mu)}^2\\
	&\  + \Big(\gamma-\frac{\beta^2}{\alpha\epsilon_3} -\frac{\beta^2\epsilon_1 C_1}{2}-\frac{\gamma^2\epsilon_2 C_1}{2\beta}\Big)\norm{\nx\nv^T u}{L_2(\mu)}^2 .
	\end{aligned}
	\]
We want to optimise now all the parameters $\epsilon_i$, $i=1,\dots,4$ and $\alpha,\beta,\gamma$ to maximise the coefficients of $\norm{\nv u}{L_2(\mu)}^2$ and $\norm{\nx u}{L_2(\mu)}^2$.  For instance, letting $\epsilon_1=(C_1\alpha)^{-1}$, $\epsilon_2=(2C_1\alpha)^{-1}$,  $\epsilon_3=5/6$, $\epsilon_4=3/5$,
as well as $\beta=\alpha^2/2$ and $\gamma=2\alpha^3/3$, this becomes
\[
\begin{aligned}
a(u,u)\ge\
&\Big(\frac{2}{5}+\frac{4-5C_1}{10}\alpha-\frac{3}{20}\alpha^2-\frac{25}{72}\alpha^3\Big)\norm{\nv u}{L_2(\mu)}^2 +\frac{\alpha^2}{2}\Big(\frac{1}{6}-\alpha C_1\Big)\norm{\nx u}{L_2(\mu)}^2\\
& +\frac{\alpha}{6}  \norm{\nv\nv^T u}{L_2(\mu)}^2 + \frac{7\alpha^3}{360}\norm{\nx\nv^T u}{L_2(\mu)}^2.
\end{aligned}
\]
Selecting now $\alpha>0$ small enough, the result follows.
\end{proof}

Thus, the presence of a full $H^1(\mu)$ seminorm allows for the use of a Poincar\'e inequality on weighted spaces, thereby, allowing to conclude exponential decay as $t_f\to\infty$.

\begin{remark}
Both the constant $c_{hc,1}$ and the admissible values of $\beta$ can be optimised via different choices of $\mathcal{A}$ to yield larger (hypo)coercivity constant. The particular selection of $\mathcal{A}$ above is made in the interest of showing existence of at least one such matrix with the desired properties.
\end{remark}

\section{Numerical methods}\label{sec: Numerical methods}
We now proceed with the design of Galerkin methods for the approximation of the solution to \eqref{eq:FP_u_classical}. For the spatial discretisation, we will employ continuous finite element spaces which, depending on the dimension $d$ of the problem, can be standard Lagrange finite elements (for $d=1$), or reduced complexity finite element spaces for $d\ge 2$, (such as sparse grids; see, e.g.,  \cite{griebel} for a survey,) or, perhaps,  a full $(2+2)$-dimensional finite element space in cases its complexity is computationally tractable. 
Nonetheless, the proofs below are \emph{oblivious} to the specificities of the underlying Galerkin spaces, as long as they are polynomial on each element. For the temporal discretisation, we will consider arbitrary order discontinuous Galerkin time-stepping; see Section \ref{sec: A fully discrete scheme}.

\begin{figure}[h]
	\centering
	\setlength{\unitlength}{0.25mm}
\begin{picture}(220,220)(-110,-110)
	\linethickness{0.35pt}

	\Line(-50,-50)(-40,-50)
	\Line(-40,-50)(-20,-50)
	\Line(-20,-50)(  0,-50)
	\Line(  0,-50)( 20,-50)
	\Line( 20,-50)( 40,-50)
	\Line( 40,-50)( 50,-50)

	\Line(50,-50)(50,-8)
	\Line(50,-8)(50,30)
	\Line(50,30)(50,50)

	\Line(50,50)(40,50)
	\Line(40,50)(20,50)
	\Line(20,50)(0,50)
	\Line(0,50)(-20,50)
	\Line(-20,50)(-40,50)
	\Line(-40,50)(-50,50)

	\Line(-50,50)(-50,30)
	\Line(-50,30)(-50,-10)
	\Line(-50,-10)(-50,-50)

	\Line(-42,-31)(-21,-28)
	\Line(-21,-28)(  2,-32)
	\Line(  2,-32)( 21,-27)
	\Line( 21,-27)( 41,-31)

	\Line(-50,-10)(-31,-8)
	\Line(-31,-8)(-11,-12)
	\Line(-11,-12)(11,-9)
	\Line(11,-9)(29,-12)
	\Line(29,-12)(50,-8)

	\Line(-39,8)(-18,12)
	\Line(-18,12)(1,9)
	\Line(1,9)(19,13)
	\Line(19,13)(42,10)

	\Line(-50,30)(-29,32)
	\Line(-29,32)(-8,28)
	\Line(-8,28)(12,31)
	\Line(12,31)(31,27)
	\Line(31,27)(50,30)

	\Line(-50,-50)(-42,-31)

	\Line(-40,-50)(-42,-31)
	\Line(-40,-50)(-21,-28)

	\Line(-20,-50)(-21,-28)
	\Line(-20,-50)(2,-32)

	\Line(0,-50)(2,-32)
	\Line(0,-50)(21,-27)

	\Line(20,-50)(21,-27)
	\Line(20,-50)(41,-31)

	\Line(50,-50)(41,-31)

	\Line(-42,-31)(-50,-10)
	\Line(-42,-31)(-31,-8)

	\Line(-21,-28)(-31,-8)
	\Line(-21,-28)(-11,-12)

	\Line(2,-32)(-11,-12)
	\Line(2,-32)(11,-9)

	\Line(21,-27)(11,-9)
	\Line(21,-27)(29,-12)

	\Line(41,-31)(29,-12)
	\Line(41,-31)(50,-8)

	\Line(-50,-10)(-39,8)
	\Line(-31,-8)(-39,8)

	\Line(-31,-8)(-18,12)
	\Line(-11,-12)(-18,12)

	\Line(-11,-12)(1,9)
	\Line(11,-9)(1,9)

	\Line(11,-9)(19,13)
	\Line(29,-12)(19,13)

	\Line(29,-12)(42,10)
	\Line(50,-8)(42,10)

	\Line(-39,8)(-50,30)
	\Line(-39,8)(-29,32)

	\Line(-18,12)(-29,32)
	\Line(-18,12)(-8,28)

	\Line(1,9)(-8,28)
	\Line(1,9)(12,31)

	\Line(19,13)(12,31)
	\Line(19,13)(31,27)

	\Line(42,10)(31,27)
	\Line(42,10)(50,30)

	\Line(-50,30)(-40,50)

	\Line(-29,32)(-40,50)
	\Line(-29,32)(-20,50)

	\Line(-8,28)(-20,50)
	\Line(-8,28)(0,50)

	\Line(12,31)(0,50)
	\Line(12,31)(20,50)

	\Line(31,27)(20,50)
	\Line(31,27)(40,50)

	\Line(50,30)(40,50)
	\Line(50,30)(50,50)

	\Line(-50,-50)(-100,-50)
	\Line(-50,-10)(-100,-10)
	\Line(-50, 30)(-100, 30)
	\Line(-50, 50)(-100, 50)

	\Line(50,-50)(100,-50)
	\Line(50,-8 )(100,-8 )
	\Line(50, 30)(100, 30)
	\Line(50, 50)(100, 50)

	\Line(-50,-50)(-50,-100)
	\Line(-40,-50)(-40,-100)
	\Line(-20,-50)(-20,-100)
	\Line(  0,-50)(  0,-100)
	\Line( 20,-50)( 20,-100)
	\Line( 50,-50)( 50,-100)

	\Line(-50,50)(-50,100)
	\Line(-40,50)(-40,100)
	\Line(-20,50)(-20,100)
	\Line(  0,50)(  0,100)
	\Line( 20,50)( 20,100)
	\Line( 40,50)( 40,100)
	\Line( 50,50)( 50,100)

		\linethickness{0.55pt}
	\put(-105,0){\vector(1,0){210}}
	\put(0,-105){\vector(0,1){210}}

	\put(108,-3){$x$}
	\put(3,108){$v$}

	\put(17,18){$T$}
	\put(67,67){$T_\infty$}

\end{picture}
\hspace{1cm}
\begin{picture}(220,220)(-110,-110)
	\linethickness{0.35pt}

	\Line(-50,-50)(-50,50)
	\Line(-30,-50)(-30,50)
	\Line(-10,-50)(-10,50)
	\Line( 10,-50)( 10,50)
	\Line( 30,-50)( 30,50)
	\Line( 50,-50)( 50,50)

	\Line(-50,-50)(50,-50)
	\Line(-50,-30)(50,-30)
	\Line(-50,-10)(50,-10)
	\Line(-50, 10)(50, 10)
	\Line(-50, 30)(50, 30)
	\Line(-50, 50)(50, 50)

	\Line(-50,-50)(-105,-50)\Line(50,-50)(105,-50)
	\Line(-50,-30)(-105,-30)\Line(50,-30)(105,-30)
	\Line(-50,-10)(-105,-10)\Line(50,-10)(105,-10)
	\Line(-50, 10)(-105, 10)\Line(50, 10)(105, 10)
	\Line(-50, 30)(-105, 30)\Line(50, 30)(105, 30)
	\Line(-50, 50)(-105, 50)\Line(50, 50)(105, 50)

	\Line(-50,-50)(-50,-105)\Line(-50,50)(-50,105)
	\Line(-30,-50)(-30,-105)\Line(-30,50)(-30,105)
	\Line(-10,-50)(-10,-105)\Line(-10,50)(-10,105)
	\Line( 10,-50)( 10,-105)\Line( 10,50)( 10,105)
	\Line( 30,-50)( 30,-105)\Line( 30,50)( 30,105)
	\Line( 50,-50)( 50,-105)\Line( 50,50)( 50,105)

	\linethickness{0.55pt}
	\put(-105,0){\vector(1,0){210}}
	\put(0,-105){\vector(0,1){210}}

	\put(108,-3){$x$}
	\put(3,108){$v$}

	\put(17,17){$T$}
	\put(67,67){$T_\infty$}
\end{picture}
\caption{Illustration of meshes comprising of finite and `infinite' elements for $d=1$. Left panel: an `$(x,v)$-unstructured' mesh. Right panel: an `$(x,v)$-tensor-product' mesh.}
\label{fig:meshes}
\end{figure}
We consider \eqref{eq:FP_u_exp} posed over $[0,t_f]\times \mathbb{R}^d\times \mathbb{R}^d$. This requires special treatment. To that end, we subdivide $\mathbb{R}^d\times \mathbb{R}^d$ into a combination of subdomains of finite and of infinite diameter.  An illustration is given in Figure \ref{fig:meshes} for $d=1$, showing a `$(v,x)$-unstructured' scenario, whereby a mesh is constructed directly in $\mathbb{R}^{2d}$, and a `$(v,x)$-tensor-product' option, with the mesh constructed by tensorising element $T_v\subset \mathbb{R}^d$ and $T_x\subset \mathbb{R}^d$. Note that in the latter case for $d>1$, it is by all means possible to have \emph{unstructured}, e.g., simplicial meshes on each bounded domain in $\mathbb{R}^d$. 
Due to the weight $\ex^{-E}$, integrals of polynomials over `infinite' elements ($T_\infty$ in Figure \ref{fig:meshes}) remain bounded.

\subsection{Spatial discretisation}
 We consider a generic finite dimensional continuous approximation space $V_h\equiv V_h^p\subset H^1(\mu)$, consisting of piecewise polynomial functions in $\mathbb{R}^{2d}$ of degree $p$. In all cases of possible finite element spaces, we make the assumption that the global constant functions are included in $V_h$.

Let $T\in\mathcal{T}$ denote a generic element of the domain subdivision $\mathcal{T}$ and $n_v, n_x\in\mathbb{R}^d$ signifying the components of the unit outward normal vector $\mbf{n}=(n_v^T,n_x^T)^T\in \mathbb{R}^d\times\mathbb{R}^d$ to $\partial T$ for the variables $v$ and $x$, respectively. We denote by $\Gamma$ the skeleton of the subdivision $\mathcal{T}$, viz., $\Gamma:=\cup_{T\in\mathcal{T}}\partial T$. Further, let
$\ud \nu:= \ex^{-E}\ud s$ with $\ud s$ the differential of the Hausdorff measure on $\Gamma$.
Also, let $\nx^h$, $\nv^h$, $\nvx^h$ denote the broken (i.e., element-wise) versions of gradients with respect to $x,v, (v,x)$, respectively, subordinate to $V_h$.

We will make use of the jump $\jump{\cdot}_{z}$ and average $\av{\cdot}$ notation across element interfaces: for $W$ a scalar or vector function with well-defined traces on $\Gamma$, we set
\[
\jump{W}_{z} |_F=  \big(W^+|_F\big)\otimes \mbf{n}_z^++\big(W^-|_F\big)\otimes \mbf{n}_z^-,
\qquad
\av{W} |_F= \frac{1}{2}\big( W^+|_F+W^-|_F),
,
\]
for each interface $F=\partial T^+\cap\partial T^-$ of two adjacent elements $T^+,T^-\in\mathcal{T}$, with $W^+$ and $W^-$ denoting the traces taken from within $T^+$ and $T^-$, and $\mbf{n}_z^\pm:=\mbf{n}_z|_{T^\pm}$ for $z\in\{v_1,\dots, v_d, x_1,\dots,x_d\}$ is the (scalar) component of the normal vector along the $z$-th partial derivative direction. For example, $\jump{W}_{v_j}=W^+|_F \mbf{n}_{v_j}^++W^-|_F\mbf{n}_{v_j}^-$, has the \emph{same} dimension as $W$ and denotes the jump along the $v_j$ variable direction.

Also, the jump operator is defined with $z\in\{v,x\}$ for $W$ a $d$-dimensional vector-valued function with well-defined traces on $\Gamma$. For example, $\jump{W}_{x} |_F=  \big(W^+|_F\big)\otimes n_x^++\big(W^-|_F\big)\otimes n_x^-$ denotes a $d\times d$ dimensional tensor-valued function. Similarly, we define the jump operator for  $2d$-dimensional vector-valued $W$ by removing $z$, i.e, $\jump{W} |_F=  \big(W^+|_F\big)\otimes \mbf{n}^++\big(W^-|_F\big)\otimes \mbf{n}^-$ is a $2d\times 2d$ dimensional tensor-valued function.

Recalling the notation $\mathcal{V}=(- \nabla V^T , v^T)^T$, we first consider the spatially discrete problem as means to present the spatial discretisation. In particular, we seek $U\in V_h$, such that, for every $t\in (0,t_f]$, we have
\begin{equation}\label{eq:method_semi_discrete}
\dip{U_t}{W}+a_h(U,W) +s_h(U,W)= 0,
\end{equation}
for all $W\in V_h$, where
$
\dip{U_t}{W}:=(U_t, W)_{L_2(\mu)}+(\nvx  U_t, \mathcal{A}\nvx  W)_{L_2(\mu)}$,
and
\begin{equation}\label{def: discrete bilinear form}
\begin{split}
a_h(U,W):=&\  ( (\mathcal{V}\cdot\nvx) U,W)_{L_2(\mu)}+((\mathcal{V}\cdot\nvx^h )\nvx  U, \mathcal{A}\nvx  W)_{L_2(\mu)}
\\
&	+
(\mathcal{B}\nvx  U,\nvx  W)_{L_2(\mu)}
+\sum_{j=1}^d(\nvx^h   U_{v_j} , \mathcal{A}\nvx^h  W_{v_j})_{L_2(\mu)},
\end{split}
\end{equation}
and
$s_h:V_h\times V_h\to \mathbb{R}$ the stabilisation bilinear form:
\begin{equation}\label{def: discrete stabilsationform}
	\begin{aligned}
		s_h(U,W):=&\  -	\int_{\Gamma}\av{\nvx  W}^T\mathcal{A}\jump{\nvx  U}\mathcal{V}\ud \nu
		 -\tau\int_{\Gamma} \av{\nv W}^T\jump{\nv U}_x \nabla V\ud \nu \\
		&-\sum_{j=1}^d\int_{\Gamma}\Big({ \av{\nvx U_{v_j}}^T\mathcal{A}\jump{\nvx  W}_{v_j}}+{ \av{\nvx W_{v_j}}^T\mathcal{A}\jump{\nvx  U}_{v_j}}\\
	 &	\qquad\qquad\quad
		-\sigma\jump{\nvx U}_{v_j}^T{ \mathcal{C}}\jump{\nvx  W}_{v_j}\Big)\ud \nu,
	\end{aligned}
\end{equation}
with $\sigma:\Gamma\to\mathbb{R}$ a non-negative function, henceforth termed as \emph{interior penalty parameter}, $\tau\ge 0$ constant whose precise value will be determined in the proof of (hypo)coercivity below, and the auxiliary matrix
$
\mathcal{C}=
\big({\rm diag}(\alpha I,\gamma I)\big)^{-1}\mathcal{A}^2$,
with ${\rm diag}(A,B)={\small \Big(
	\begin{array}{cc}
		A & 0 \\
		0 & B
	\end{array}
	\Big)}$, for two square matrices $A,B$.
 The choice of $s_h$ is crucial in preserving a `discrete' version of the hypocoercivity property of Lemma \ref{lem:hypoco}. Note that $s_h(u,W)=0$ for $u\in H^3(\mu)$ and $W\in V_h$, thus, preserving the consistency with respect to the exact problem.
 \begin{remark}
 	In the, physically reasonable and common, case of the elements $T\in\mathcal{T}$ defined as tensor products of $d$-dimensional simplices or boxes, viz., $T=T_v\times T_x$, with $Tv,T_x\subset\mathbb{R}^d$,  we have $\jump{\nv U}_x=\mbf{0}$, resulting to the second term in $s_h$ to vanish.
 \end{remark}

\begin{remark}[mass conservation of semidiscrete scheme]\label{rem:discrete_cons}
The discrete version of \eqref{eq:fp_cons_mass} holds. Indeed, since $V_h$ contains constants, we set $W=1$ into \eqref{eq:method_semi_discrete}, to deduce
	\begin{equation}\label{eq:mass_cons}
		\int U\ud \mu= 	\int u_0\ud \mu=\bar{u},\quad t\in[0,t_f].
	\end{equation}
\end{remark}

\section{Inverse estimates with exponential weights}\label{sec: Inverse estimates with exponential weights}


The proof of (hypo)coerci\-vity for the interior penalty terms in $s_h$ will require the estimation of the third and fourth terms on the right-hand side of \eqref{def: discrete stabilsationform}. To that end, we require, so-called, trace inverse inequalities of the form $\|W\|_{L_2(\nu,F)}\le C(T,p)\|W\|_{L_2(T,\mu)}$ on exponentially weighted norms, with $F$ face of an element $T$, holding for all polynomials $W$ up to degree $p$, for some constant $C(T,p)$ depending on the shape and size of $T$ and $p$. In the present context of weighted norms, we are required to prove new trace inverse inequalities involving exponential weights. These may be of independent interest.

We first prove an one-dimensional estimate for Gaussian-like weights that will be used for the derivation of the multivariate versions.
\begin{theorem}\label{them:weighted_general_1d_full}
	For any $s\in \mathcal{P}_p([0,1])$, $p\in\mathbb{N}$, and for $A\ge0$, we have the bound
	\begin{equation}\label{basic_unit_interval}
		s^2(0)
		\le  (2\lfloor (A+B_+)/4\rfloor +p+3)^2\int_0^1s^2(x) 	\ex^{-(Ax^2+Bx)/2}\ud x,
	\end{equation}
	with $\lfloor\cdot\rfloor$ the integer part and $(\cdot)_+$ the positive part of a real number, respectively.
\end{theorem}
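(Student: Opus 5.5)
The plan is to reduce the Gaussian-like weight to a polynomial weight and then invoke the classical unweighted trace inverse inequality on $[0,1]$. I will not track the sharp constant; the argument should give something slightly better than the one stated.

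\emph{Step 1 (elementary pointwise bounds on the weight).} For $x\in[0,1]$ we have $Ax^2\le Ax$, since $A\ge0$, and $Bx\le B_+x$. Setting $K:=A+B_+$, this gives
\[
\ex^{-(Ax^2+Bx)/2}\ge \ex^{-Kx/2}.
\]
Next let $m:=\lfloor K/4\rfloor+1\in\mathbb{N}$, so that $2m\ge K/2$. The inequality $1-x\le \ex^{-x}$ gives $(1-x)^{2m}\le \ex^{-2mx}\le \ex^{-Kx/2}$ on $[0,1]$. Consequently
\[
\int_0^1 s^2(x)(1-x)^{2m}\ud x\le \int_0^1 s^2(x)\,\ex^{-(Ax^2+Bx)/2}\ud x .
\]

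\emph{Step 2 (absorbing the polynomial weight).} Define $q(x):=s(x)(1-x)^m$. This is a polynomial of degree at most $p+m$, with $q(0)=s(0)$ and $\int_0^1 q^2\ud x=\int_0^1 s^2(1-x)^{2m}\ud x$. Using an even power $2m$ in Step 1 is exactly what makes $q$ a polynomial. It therefore suffices to prove the unweighted inequality
\[
q^2(0)\le (n+1)^2\int_0^1 q^2\ud x\qquad\text{for all } q\in\mathcal{P}_n([0,1]),
\]
and apply it with $n=p+m$.

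\emph{Step 3 (unweighted estimate).} Expand $q=\sum_{k=0}^n c_k\ell_k$ in the $L_2(0,1)$-orthonormal shifted Legendre polynomials $\ell_k(x)=\sqrt{2k+1}\,L_k(1-2x)$. These satisfy $|\ell_k(0)|=\sqrt{2k+1}$. By Cauchy--Schwarz,
\[
q(0)^2\le \Big(\sum_{k=0}^n(2k+1)\Big)\sum_{k=0}^n c_k^2=(n+1)^2\norm{q}{L_2(0,1)}^2 .
\]

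\emph{Conclusion.} Chaining the three steps yields
\[
s^2(0)\le (p+\lfloor K/4\rfloor+2)^2\int_0^1 s^2\,\ex^{-(Ax^2+Bx)/2}\ud x .
\]
Since $p+\lfloor K/4\rfloor+2\le 2\lfloor (A+B_+)/4\rfloor+p+3$, this implies the stated bound. The only delicate point is Step 1: choosing the exponent $m$ as an integer so that $(1-x)^{2m}$ lies below the exponential weight while $m$ stays of size $K/4$. The resulting constant depends only on $p$ and $\lfloor (A+B_+)/4\rfloor$, as claimed.
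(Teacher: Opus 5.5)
Your proof is correct. It shares the paper's skeleton: bound the weight below on $[0,1]$ by an even power of a polynomial equal to $1$ at $x=0$, absorb half of that power into $s$, and apply the unweighted inequality $q^2(0)\le (n+1)^2\|q\|_{L_2(0,1)}^2$.

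The difference is the choice of minorant. The paper keeps the quadratic exponent and uses $\tilde g(x)=1-(Ax^2+Bx)/(4m)$. It proves $\tilde g^{2m}\le \ex^{-(Ax^2+Bx)/2}$ by showing that $g=\tilde g^m\ex^{(Ax^2+Bx)/4}$ is decreasing when $B\ge 0$, and it handles $B<0$ separately by reducing to $B=0$. Since $\tilde g^m$ has degree $2m$, the unweighted inequality is applied in degree $p+2m$, which is where the constant $2\lfloor (A+B_+)/4\rfloor+p+3$ comes from.

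You first linearise the exponent via $Ax^2+Bx\le (A+B_+)x$ on $[0,1]$. This treats both signs of $B$ at once and replaces the derivative computation by the elementary bound $(1-x)^{2m}\le \ex^{-2mx}$. Your multiplier $(1-x)^m$ raises the degree only by $m$, so you get the sharper constant $(p+\lfloor (A+B_+)/4\rfloor+2)^2$.

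The paper's $\tilde g^{2m}$ actually lies above your $(1-x)^{2m}$ on $[0,1]$, so it tracks the weight more closely. However, only the degree of the multiplier enters the trace inequality, so your cruder minorant costs nothing and improves the constant. Your shifted-Legendre proof of the unweighted inequality is also correct ($\ell_k(0)=\sqrt{2k+1}$ and $\sum_{k=0}^n(2k+1)=(n+1)^2$); the paper simply cites it.
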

\begin{proof}

	Let $g:\mathbb{R}\to\mathbb{R}$ with
	$
	g(x) =\tilde{g}^m(x)\ex^{(Ax^2+Bx)/4}$, whereby $\tilde{g}(x):=1-(Ax^2+Bx)/(4m)$,
	for $m\in \mathbb{N}$, to be determined below; cf., also \cite{erdelyi89} for a use of a similar function used in the proof of weighted Markov-type inequalities.
	We compute
	\[
		g'(x)
		= -(2Ax+B)(Ax^2+Bx)\tilde{g}^{m-1}(x)\ex^{(Ax^2+Bx)/4}/(16m)
.
\]
	First, we consider the case $B\ge 0$. Then, $(2Ax+B)(Ax^2+Bx)\ge 0$ for $x\in[0,1]$. If, additionally,  $m:= \lfloor (A+B)/4\rfloor+1$, we have $0\le \tilde{g}(x)$ and, thus,
	$g$ is decreasing. Hence, $g(x)\le g(0)$, for $x\in [0,1]$, we get
	$
	\ex^{-(Ax^2+Bx)/2}\ge \tilde{g}^{2m}(x)
	$.
	Now, let $s\in \mathcal{P}_p([0,1])$.  Then, a standard one-dimensional trace inverse inequality of the form $|w(a)|^2 \le (p+1)^2/(b-a)\norm{w}{L_2((a,b))}^2$, for $w\in\mathcal{P}_p((a,b))$, see, e.g., \cite{warburton2003constants}, yields
	\[
	\begin{aligned}
		s^2(0)=\big(s(0) \tilde{g}^m(0)\big)^2\le&\  (2\lfloor (A+B)/4\rfloor +p+3)^2\int_0^1s^2(x) \tilde{g}^{2m}(x)\ud x\\
		\le&\  (2\lfloor (A+B)/4\rfloor +p+3)^2\int_0^1s^2(x) 	\ex^{-(Ax^2+Bx)/2}\ud x.
	\end{aligned}
	\]
For the case $B<0$, we note that $\ex^{-Bx}\ge1$ for $x\in[0,1]$. Now, starting from \eqref{basic_unit_interval} for $B=0$, we have $ s^2(0)
\le  (2\lfloor A/4\rfloor +p+3)^2\int_0^1s^2(x) 	\ex^{-Ax^2/2}\ud x $ and, since $	\ex^{-Ax^2/2}\le 	\ex^{-(Ax^2+Bx)/2}$, the result follows.
\end{proof}

We now use the above to show the corresponding trace inverse inequality for the Gaussian weight on an arbitrary line segment $[\bx_0,\bx_1]$, with endpoints $\bx_0,\bx_1\in\mathbb{R}^d$. Let  $\mbf{r}(t):=\bx_0+t \mbf{b}$,   $t\in [0,1]$, with $\mbf{b} :=\bx_1-\bx_0$ parametrisation of $[\bx_0,\bx_1]$. Then, $|\mbf{r}(t)|^2=|\bx_0|^2+t^2 |\mbf{b}|^2+2t\bx_0\cdot \mbf{b}$. Also, $|\mbf{r}'(t)|=|\mbf{b}|$. Let also $v\in\mathcal{P}_p(K)$, for an open set $K\subset\mathbb{R}^d$, such that $[\bx_0,\bx_1]\subset K$.  Set $s(t):=v(\mbf{r}(t))$, $t\in [0,1]$. Then, $s\in \mathcal{P}_p([0,1])$. Defining
$
C(\mbf{b},\bx_0,p):= (2\lfloor (|\mbf{b}|^2+(2\bx_0\cdot \mbf{b})_+)/4\rfloor +p+3)^2$,
we employ \eqref{basic_unit_interval} with $A=|\mbf{b}|^2$ and $B=2\bx_0\cdot \mbf{b}$,  to get
\begin{equation}
	\label{magic_one}
	\begin{aligned}
		v^2(\bx_0)\ex^{-|\bx_0|^2/2}
		\le&\   C(\mbf{b},\bx_0,p)\int_0^1s^2(t) 	\ex^{-\big( |\bx_0|^2+t^2|\mbf{b}|^2+2t\bx_0\cdot \mbf{b}\big)/2}\ud t\\
		=&\   C(\mbf{b},\bx_0,p) |\mbf{b}|^{-1} \int_0^1v^2(\mbf{r}(t))	\ex^{- |\mbf{r}(t)|^2/2}|\mbf{r}'(t)|\ud t\\
		=&\   C(\mbf{b},\bx_0,p) |\mbf{b}|^{-1} \int_{[\bx_0,\bx_1]}v^2(\bx)	\ex^{-|\bx|^2/2}\ud\bx.
	\end{aligned}
\end{equation}

From this we can prove a general estimate on simplicial and/or box-type elements $T\in\mathcal{T}$ via tensor-product arguments.

\begin{lemma}\label{lem:inv_est_Gaussian}
	Let $v\in\mathcal{P}_p(T)$, for a simplicial or box-type element $T\in\mathcal{T}$.  Set $\delta_T:=\max\{|\bx|:\bx\in \bar{T}\}$. Then, for each face $F\subset \partial T$, we have the inequality
	\begin{equation}\label{inv_est_Gaussian}
	\int_{F}v^2(\bs)\ex^{-|\bs|^2/2}\ud \bs \le
	c_{\rm inv}(\delta_T) \frac{p^2}{h_T} \int_{T}  v^2(\bx)	\ex^{-|\bx|^2/2}\ud\bx,
	\end{equation}
	 with
	$
	c_{\rm inv}(\delta_T):=M_T^Fh_T \big( h_T^2+2\delta_T h_T+2p+6\big)^2/(4\rho_T^{}p^2\min_\theta\sin \theta)^{-1},
	$  with $\theta$ denoting the angles of $T$ and $\rho_T^{}$ the radius of the largest inscribed ball of $T$.
\end{lemma}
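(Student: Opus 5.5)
We reduce the face integral to a family of one-dimensional segment estimates of the type \eqref{magic_one}, and then integrate over the face. Each segment runs from a point $\bs\in F$ into the interior of $T$, in a fixed direction transversal to $F$. This is a "sweeping" argument in the spirit of the classical proofs of trace inverse inequalities (cf.\ \cite{warburton2003constants}). The shape constants $M_T^F$, $\rho_T^{}$ and $\min_\theta\sin\theta$ come only from the geometry of the sweep; the weight-dependent factor comes from $C(\mbf{b},\bx_0,p)$.

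\emph{Step 1: choose the sweep.} For a box, take $\mbf{b}$ to be the inward normal to $F$ scaled to the width of $T$ in that direction. Then each segment $[\bs,\bs+\mbf{b}]$, $\bs\in F$, lies in $\bar T$, and $T$ is exactly swept. For a simplex, let $\bx_F$ be the vertex opposite $F$. Use the segments $[\bs,\bs+t(\bx_F-\bs)]$ for $t\in[0,1]$, or, to keep a fixed direction, parallel segments $[\bs,\bs+\lambda(\bs)\mbf{b}]$ with $\mbf{b}$ pointing toward the opposite vertex. The lengths $\lambda(\bs)|\mbf{b}|$ are bounded below by a multiple of $\rho_T^{}$ on a sub-face carrying a fixed proportion of $F$, or handled via the conical map. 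The quantity $M_T^F$ will absorb the Jacobian ratio $|F|/|T|$ times the relevant lengths, and $\min_\theta\sin\theta$ will control the angle between $\mbf{b}$ and the normal to $F$.

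\emph{Step 2: apply \eqref{magic_one} uniformly.} On each segment, $|\mbf{b}|\le h_T$ and $|\bx_0|\le\delta_T$. Hence $(|\mbf{b}|^2+(2\bx_0\cdot\mbf{b})_+)/4\le (h_T^2+2\delta_Th_T)/4$, and therefore $C(\mbf{b},\bx_0,p)\le (h_T^2+2\delta_Th_T+2p+6)^2/4$ after bounding the integer part crudely. This gives, for every $\bs\in F$,
\[
v^2(\bs)\ex^{-|\bs|^2/2}\le \frac{(h_T^2+2\delta_Th_T+2p+6)^2}{4\,\ell(\bs)}\int_{\text{segment}(\bs)}v^2\ex^{-|\bx|^2/2}\ud\bx ,
\]
where $\ell(\bs)$ is the segment length. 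Since $v$ restricted to the line is a polynomial of degree $p$, Theorem \ref{them:weighted_general_1d_full} applies directly.

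\emph{Step 3: integrate over $F$.} Integrate in $\bs$ over $F$ and change variables $(\bs,t)\mapsto \bs+t\mbf{b}$. The Jacobian is $|\mbf{b}|\,|\mbf{n}_F\cdot\mbf{b}/|\mbf{b}||\ge |\mbf{b}|\sin\theta_{\min}$, which is where $\min_\theta\sin\theta$ enters. The map is injective into $T$, so the double integral is bounded by $\int_T v^2\ex^{-|\bx|^2/2}\ud\bx$. Finally, use $\ell(\bs)\gtrsim\rho_T^{}$ and rewrite the constant as $c_{\rm inv}(\delta_T)p^2/h_T$ with $M_T^F$ collecting the remaining geometric factor.

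\emph{Main obstacle.} The main obstacle is Step 3 for simplices. Near the boundary of $F$, the segments toward the opposite vertex degenerate: $\ell(\bs)\to$ small for the parallel sweep, or the Jacobian of the conical map vanishes toward the apex. A pointwise bound with $\ell(\bs)^{-1}$ is then not integrable against a uniform constant. I would first try the conical parametrisation $\bx=\bx_F+r(\bs-\bx_F)$, $r\in[0,1]$. Its Jacobian $r^{d-1}|F|\,{\rm dist}(\bx_F,F)/|F|$ is harmless if one restricts to $r\in[1/2,1]$: the segment estimate on half-segments still holds with length at least ${\rm dist}(\bx_F,F)/2\ge\rho_T^{}$, up to the factor already accounted for. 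This yields the $\rho_T^{}$ and $M_T^F$ dependence without any degeneracy. The box case is immediate by the tensor-product sweep.
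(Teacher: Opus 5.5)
Your argument is correct in substance. For simplices, though, the route you settle on in your last paragraph (the conical sweep) is a genuinely different decomposition from the paper's.

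\textbf{The paper's route.} It splits $F$ into $M_T^F$ subfaces $F_i$. Each $F_i$ is the base of an oblique prism $\underline{T}_i\subset T$ with a fixed axis $\mbf{b}_{F_i}$ ending on the mid-section $F_{\rm mid}$. Each prism is straightened by a volume-preserving shear, and \eqref{magic_one} is applied along the fixed axis. The constant then collects $(\min_\theta\sin\theta)^{-1}$ from $|F_i|/|\hat F_i|$, $\rho_T^{-1}$ from $|\mbf{b}_{F_i}|^{-1}$, and $M_T^F$ from bounding each $\int_{\underline{T}_i}$ by $\int_T$.

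\textbf{Your route.} You use the single injective map $\Phi(\bs,r)=\bx_F+r(\bs-\bx_F)$ on $F\times[1/2,1]$. It covers the same region between $F$ and $F_{\rm mid}$, but with $\bs$-dependent directions. Take $\mbf{b}(\bs)=(\bx_F-\bs)/2$ and $t=2(1-r)$, and write $g:=v^2\ex^{-|\cdot|^2/2}$. The first line of \eqref{magic_one} then gives $v^2(\bs)\ex^{-|\bs|^2/2}\le 2C(\mbf{b}(\bs),\bs,p)\int_{1/2}^1 g(\Phi(\bs,r))\ud r$. Moreover $\ud\bx=r^{d-1}H\ud r\ud\bs$ with $H={\rm dist}(\bx_F,F)\ge 2\rho_T^{}$, since the inscribed ball fits in the slab between the two parallel hyperplanes. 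Together with $|\mbf{b}|\le h_T/2$ and $|\bs|\le\delta_T$, this yields
\[
\begin{aligned}
\int_F v^2(\bs)\ex^{-|\bs|^2/2}\ud\bs&\le\frac{2^{d}}{H}\,\max_{\bs\in F}C(\mbf{b}(\bs),\bs,p)\int_T v^2(\bx)\ex^{-|\bx|^2/2}\ud\bx\\
&\le\frac{2^{d-1}}{4\rho_T^{}}\big(h_T^2+2\delta_Th_T+2p+6\big)^2\int_T v^2(\bx)\ex^{-|\bx|^2/2}\ud\bx .
\end{aligned}
\]

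\textbf{Comparison.} Your bound is the stated one, read as $M_T^F(h_T^2+2\delta_Th_T+2p+6)^2/(4\rho_T^{}\min_\theta\sin\theta)$, which is what the paper's proof produces. In it, $M_T^F$ is replaced by $2^{d-1}$, which equals the paper's value for $d=2$ and is below its $7$ for $d=3$. The factor $\min_\theta\sin\theta$ disappears, and the argument is uniform in $d$, whereas the paper specifies its subdivision only for $d=2,3$. What the paper's fixed-axis prisms buy in return is one mechanism that also covers box-type elements and the unbounded elements of Lemma \ref{inf_strip_Gaussian}. There is no opposite vertex there, and the same prism argument is reused with $|\mbf{b}_F|=\sqrt{p}$.

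\textbf{Bookkeeping to tidy.} Keep $\ell(\bs)^{-1}$ paired with the arc-length element, as above (the $\ud t$-form of \eqref{magic_one}). If you bound $\ell(\bs)^{-1}\le\rho_T^{-1}$ and the arc-length $|\bs-\bx_F|\ud r\le h_T\ud r$ separately, you pick up an extra factor $h_T/H$ and do not land on the stated form. Also, the fixed-direction Jacobian with $\sin\theta_{\min}$ from your Step~3 plays no role in the conical route.

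\textbf{Error in your box case.} As written it is wrong. Box-type means a linearly mapped hypercube, so a general parallelepiped. For a non-rectangular one, segments along the inward normal starting from points of $F$ near $\partial F$ leave $T$. Sweep instead along the edge vector $\mbf{b}$ transversal to $F$, which is exactly the paper's oblique prism. Then $(\bs,t)\mapsto\bs+t\mbf{b}$ maps $F\times(0,1)$ bijectively onto $T$ with Jacobian $|\mbf{n}_F\cdot\mbf{b}|$. This is the width of $T$ across $F$, hence at least $2\rho_T^{}$. The result is the bound with constant $(h_T^2+2\delta_Th_T+2p+6)^2/(8\rho_T^{})$, again without any $\sin\theta$.
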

\begin{proof}
	If $T$ is simplicial, each of its faces $F\subset \partial T$ is subdivided into a union of non-overlapping subfaces $F_i$, $i=1,\dots, M_T^F$, $M_T^F\in\mathbb{N}$,  as follows. For $d=2$, we take $M_T^F=2$, and we have two subfaces $F_1$ and $F_2$ of $F$ having equal length separated by the midpoint of $F$; we refer to Figure \ref{fig:oblique} for an illustration, as well as for the respective construction for $d=3$, having $M_T^F=7$.

	\begin{figure}[h]
		\centering
		\setlength{\unitlength}{0.6mm}
		\begin{picture}(100,50)

		\color[gray]{0.8}
			\polygon*(30,40)(60,25)(50,10)(20,25)

			\color{black}

			\Line(10,10)(90,10)
			\Line(10,10)(30,40)
			\Line(30,40)(90,10)

			\Line(20,25)(60,25)
			\Line(60,25)(50,10)
			\Line(50,10)(20,25)

			\put(10,10){\circle*{2}}
			\put(90,10){\circle*{2}}
			\put(30,40){\circle*{2}}

			\put(20,25){\circle*{2}}
			\put(60,25){\circle*{2}}
			\put(50,10){\circle*{2}}

			\put(7,17){$F_2$}

			\put(17,32){$F_1$}

			\multiput(10,10)(2,0){21}{\Line(0,0)(10,15)}

			\put(57,16){$F_{\rm mid}$}
		\end{picture}\hspace{0.5cm}
	\scalebox{1}[0.65]{
		\begin{picture}(80,70)
		\setlength{\unitlength}{0.8mm}

				\put(10,10){\circle*{2}}
				\put(60,10){\circle*{2}}
				\put(35,25){\circle*{2}}
				\put(35,60){\circle*{2}}

				\Line(10,10)(60,10)
				\Line(60,10)(35,25)
				\Line(35,25)(10,10)

				\Line(35,60)(10,10)
				\Line(35,60)(60,10)

				\dashline{1}(35,60)(35,25)

			\put(35,10){\circle*{2}}
			\put(47.5,17.5){\circle*{2}}
			\put(22.5,17.5){\circle*{2}}

			\Line(35,10)(47.5,17.5)
			\Line(47.5,17.5)(22.5,17.5)
			\Line(22.5,17.5)(35,10)

		\put(22.5,35){\circle*{2}}
		\put(47.5,35){\circle*{2}}
		\put(35,42.5){\circle*{2}}

		\Line(22.5,35)(47.5,35)
		\Line(47.5,35)(35,42.5)
		\Line(35,42.5)(22.5,35)

		\put(41.25,13.75){\circle*{2}}
		\put(35,17.5){\circle*{2}}
		\put(28.75,13.75){\circle*{2}}

		\Line(41.25,13.75)(35,17.5)
		\Line(35,17.5)(28.75,13.75)
		\Line(28.75,13.75)(41.25,13.75)

		\put(20,11){$F_1$}
		\put(45,11){$F_2$}
		\put(32,19.5){$F_3$}
		\put(32,13.5){$\tiny F_4$}
		\put(25,15.5){$\tiny F_5$}
		\put(38,15.5){$\tiny F_6$}
		\put(28,3){$\tiny F_7$}
		\put(30,7.5){\vector(1,1){5}}

		\put(32,36){$F_{\rm mid}$}
		\end{picture}
	}

		\caption{Left panel: subdivision of face  $F$ into $F_1$ and $F_2$, that are bases of the oblique prisms (parallelograms) $\underline{T}_1$ (shaded) and $\underline{T}_2$ (hatched).  Right panel: subdivision of face  $F$ (base of tetrahedron) into $F_i$, $i=1,\dots,7$, that are bases of respective oblique prisms $\underline{T}_i\subset T$. (The respective oblique prisms are omitted.)}
		\label{fig:oblique}
	\end{figure}

Thus, each $F_i$ is one of the two bases of the oblique prism in $d$ dimensions:
\[
\underline{T}_i:=\{\bx=\bx_{F_i}+t\mbf{b}_{F_i}: \bx_{F_i}\in F_i, \mbf{b}_{F_i}\in\mathbb{R}^d, t\in(0,1]\} \subset T,
\]
with the second base lying on the $(d-1)$ dimensional simplex parallel to base positioned at half of the total height of the simplex, denoted by $F_{\rm mid}$ in Figure \ref{fig:oblique}. If $T$ is a box-type element, i.e., a linearly mapped hypercube,  then it is of the form $\underline{T}_i$, so the proof below still holds.

Now consider the shear transformation $\mathcal{R}_{\underline{T}_i}:\hat{\underline{T}}_i\to \underline{T}_i$, with $\hat{\underline{T}}_i$ denoting a right-angle prism with the same axis $\mbf{b}_{F_i}$ and having one identical face parallel to $\mbf{b}_{F_i}$ with $\underline{T}_i$ with axis $\mbf{b}_{F_i}$. In other words, $\underline{T}_i$ is constructed by a shear transformation  of the right-angle prism $\hat{\underline{T}}_i$. Thus $\mathcal{R}_{\underline{T}_i}$ can be represented as the action of a matrix $R_{\underline{T}_i}\in \mathbb{R}^{d\times d}$ with $|\det (R_{\underline{T}_i})|=1$. Note also that $ R_{\underline{T}_i}\mbf{b}_{F_i}=\mbf{b}_{F_i}$.

Let also $\hat{F}_i$ denote the (pulled-back) base of $\hat{\underline{T}}_i$ so that $\mathcal{R}_{\underline{T}_i}\hat{F}_i=F_i$. Hence, for each $\hat{\bs}\in \hat{F}_i$,  we have $R_{\underline{T}_i}\hat{\bs}=\bs\in F_i$. Then, \eqref{magic_one}  implies
\[
\begin{aligned}
	v^2(R_{\underline{T}_i}\hat{\bs})\ex^{-|R_{\underline{T}_i}\hat{\bs}|^2/2}=	&\ v^2(\bs)\ex^{-|\bs|^2/2}\le
	   C(\mbf{b}_{F_i},\bs,p) |\mbf{b}_{F_i}|^{-1} \int_{[\bs,\bs+\mbf{b}_{F_i}]} v^2(\by)	\ex^{-|\by|^2/2}\ud\by\\
	   \le&\
	   C(\mbf{b}_{F_i},\bs,p) |\mbf{b}_{F_i}|^{-1} \int_{[\hat{\bs},\hat{\bs}+\mbf{b}_{F_i}]} v^2(R_{\underline{T}_i}\hat{\by})	\ex^{-|R_{\underline{T}_i}\hat{\by}|^2/2}\ud \hat{\by},
\end{aligned}
\]
since the shear transformation does not alter length in the direction of $\mbf{b}_{F_i}$. Integrating over $\hat{F}_i$ and taking the maximum on the constants over $\bs\in F_i$ results to
\[
	\int_{\hat{F}_i}	v^2(R_{\underline{T}_i}\hat{\bs})\ex^{-|R_{\underline{T}_i}\hat{\bs}|^2/2}\ud \hat{\bs}
	\le
	 \max_{\bs\in F_i}  C(\mbf{b}_{F_i},\bs,p) |\mbf{b}_{F_i}|^{-1} \int_{\hat{\underline{T}}_i}  v^2(R_{\underline{T}_i}\hat{\bx})	\ex^{-|R_{\underline{T}_i}\hat{\bx}|^2/2}\ud\hat{\bx}
\]
Changing variables back gives
\begin{equation}\label{basic_inverse_mapped}
\begin{aligned}
	\frac{|\hat{F}_i|}{|F_i|}\int_{F_i}	v^2(\bs)\ex^{-|\bs|^2/2}\ud \bs
	\le
	&\ \max_{\bs\in F_i}  C(\mbf{b}_{F_i},\bs,p) |\mbf{b}_{F_i}|^{-1} \int_{\underline{T}_i}  v^2(\bx)	\ex^{-|\bx|^2/2}\ud\bx,
\end{aligned}
\end{equation}
since $|\det (R_{\underline{T}_i})|=1$, with $|\hat{F}_i|,|F_i|$ denoting the respective $(d-1)$-dimensional volumes. Since $\hat{F}_i$ and $F_i$ are related by a shear transformation, their $(d-1)$-dimensional volumes will be related by the sine of the smallest angle between them. Thus, on the element $T$ we have
\begin{equation}\label{shape_reg2}
	|\hat{F}_i|/|F_i|\ge \min_\theta\sin \theta,
\end{equation}
for all $i=1,\dots,M_T^F$ with $\theta$ an angle of $T$. Also,  that the length of each prism axis vector $\mbf{b}_{F_i}$ is, of course, bounded by $h_T={\rm diam}(T)$. Moreover, since $F_i$ for all element faces $F\subset \partial T$, we have
\begin{equation}\label{shape_reg}
	\rho_T^{}\le |\mbf{b}_{F_i}|\le \frac{h_T}{2},
\end{equation}
 with $\rho_T^{}>0$ denoting the inscribed radius of $T$.
Then, \eqref{shape_reg2}  and \eqref{shape_reg} imply
\[
\begin{aligned}
\frac{	C(\mbf{b}_{F_i},\bs,p)  |\mbf{b}_{F_i}|^{-1}}{\sin \theta}
	\le&\  
	 \frac{c_{\rm inv}(\delta_T)}{M_T^F}\frac{p^2}{h_T}.
\end{aligned}
\]
Summing up for $i=1,\dots,M_T^F$, we arrive at the desired bound.
\end{proof}
\begin{remark}
	Note that for $h_T\sim\delta_T^{-1}$, $c_{\rm inv}(\delta_T)\le C$ for a constant $C$ depending only on the geometry of $T$.
\end{remark}

\begin{lemma}\label{inf_strip_Gaussian}
For `infinite' elements with base $F$, defined by
\begin{equation}\label{inf_prism}
T=\{\bx=\bx_{F}+t\mbf{b}_{F}: \bx_{F}\in F,\ \mbf{b}_{F}\in\mathbb{R}^d, t\in(0,\infty)\},
\end{equation}
we have the trace inverse inequality
	\begin{equation}\label{inv_est_Gaussian_inf}
	\int_{F}v^2(\bs)\ex^{-|\bs|^2/2}\ud \bs \le
	\tilde{c}_{\rm inv}(\delta_T) p^{\frac{3}{2}} \int_{T}  v^2(\bx)	\ex^{-|\bx|^2/2}\ud\bx,
\end{equation}
with $
\tilde{c}_{\rm inv}(\delta_T):=(3p+2\delta_T\sqrt{p}+6)^2/(4p^2\min_\theta\sin\theta)$, with $\theta$ the angles of $T$ as \eqref{shape_reg2}.
\end{lemma}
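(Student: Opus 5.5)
The plan is to mimic the proof of Lemma \ref{lem:inv_est_Gaussian}, but the bounded segment $[\bx_0,\bx_1]$ is replaced by a sub-segment of the infinite ray $\{\bs+t\mbf{b}_F: t>0\}$. The length of this sub-segment is chosen in terms of $p$ rather than $h_T$, which is where the factor $p^{3/2}$ (instead of $p^2/h_T$) comes from.

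First, we rescale the direction vector. Since $\mbf{b}_F$ is only a direction, we may normalise it as $\mbf{b}:=L\,\mbf{b}_F/|\mbf{b}_F|$ with a length $L>0$ to be chosen. For each $\bs\in F$ we apply \eqref{magic_one} on the segment $[\bs,\bs+\mbf{b}]\subset \bar T$. This gives
\[
v^2(\bs)\ex^{-|\bs|^2/2}\le C(\mbf{b},\bs,p)\,L^{-1}\int_{[\bs,\bs+\mbf{b}]}v^2(\by)\ex^{-|\by|^2/2}\ud\by,
\]
with $C(\mbf{b},\bs,p)\le (2\lfloor (L^2+2|\bs|L)/4\rfloor+p+3)^2\le \big((L^2+2\delta_T L)/2+p+3\big)^2$. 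Here $\delta_T$ is now understood as $\max\{|\bx|:\bx\in\bar F\}$ (the base), since $T$ is unbounded. Choosing $L=\sqrt{p}$ gives $C\le \big(\tfrac{3p}{2}+\delta_T\sqrt{p}+3\big)^2=(3p+2\delta_T\sqrt p+6)^2/4$ and $L^{-1}=p^{-1/2}$. The combination $C/L$ therefore equals $(3p+2\delta_T\sqrt p+6)^2/(4p^{1/2})$, which we rewrite as $\tilde c_{\rm inv}\,p^{3/2}\min_\theta\sin\theta$ to match the stated constant. It is worth double-checking the bookkeeping, since $p^{-1/2}=p^{3/2}/p^2$. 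The choice $L\sim\sqrt p$ is natural because it balances the $L^2$ term against $p$ in $C$ while gaining $L^{-1}$.

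Second, we integrate over the base. Exactly as before, we introduce the shear map $R_T$ sending a right prism $\hat T$ with axis $\mbf b_F$ onto $T$; it satisfies $|\det R_T|=1$ and $R_T\mbf b_F=\mbf b_F$. We integrate the pointwise bound over $\hat F$ and use Fubini along the axis direction. Each segment $[\hat\bs,\hat\bs+\mbf b]$ lies in $\hat T$, so the resulting integral is bounded by the integral over the truncated prism and hence over all of $\hat T$. This is trivially allowed since the integrand is nonnegative. Changing variables back and using $|\hat F|/|F|\ge\min_\theta\sin\theta$ as in \eqref{shape_reg2} yields \eqref{inv_est_Gaussian_inf}. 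No subdivision of $F$ is needed, because $T$ is itself a (semi-infinite) oblique prism.

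The main obstacle is only the uniform control of $(2\bs\cdot\mbf b)_+$ over $\bs\in F$: the position term grows with $|\bs|$, which is why $\delta_T$ enters as $2\delta_T\sqrt p$. A lesser subtlety is that integrability of $v^2\ex^{-|\bx|^2/2}$ over the unbounded $T$ is automatic, but the right-hand side should be read as possibly larger than needed, since we discard the tail beyond length $L$.
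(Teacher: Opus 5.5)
Your proposal is correct and follows essentially the paper's own proof: you truncate the infinite prism at axis length $|\mbf{b}_F|=\sqrt{p}$, apply \eqref{magic_one} together with the shear map as in \eqref{basic_inverse_mapped}--\eqref{shape_reg2}, and majorise the truncated prism by $T$; your bookkeeping $C/(L\sin\theta)\le\tilde{c}_{\rm inv}(\delta_T)p^{3/2}$ is right. The one addition is the case of an unbounded base $|F|=+\infty$ (e.g.\ the corner element $T_\infty$ of Figure~\ref{fig:meshes}), which the paper treats separately by exhausting $F$ with bounded $F_m$: your argument covers it verbatim, except that $\delta_T=\max_{\bar F}|\bs|=+\infty$ makes the bound vacuous there, so you should keep $\sup_{\bs\in F}(2\bs\cdot\mbf{b})_+$ instead of $2\delta_T L$, and this supremum is finite for those elements.
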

\begin{proof}
	Consider the prism $\tilde{T}:=\{\bx=\bx_{F}+t\mbf{b}_{F}: \bx_{F}\in F, \mbf{b}_{F}\in\mathbb{R}^d, t\in(0,1]\}$, for which we trivially have $\tilde{T}\subset T$. Select $\mbf{b}_F$ such that $|\mbf{b}_F|^2=p$. Then, assuming that $|F|<\infty$, we employ, as above,  a shear transformation $\mathcal{R}_{\tilde{T}}:\hat{T}\to\tilde{T}$ with $\hat{T}$ denoting the corresponding right-angle prism. Let also $\hat{F}$ denote the base of $\hat{F}$, so that $\mathcal{R}_{\tilde{T}}\hat{F}=F$. Then, \eqref{basic_inverse_mapped} and \eqref{shape_reg2} imply
	\begin{equation}\label{magic_inf}
\sin\theta 	\int_F	v^2(\bs)\ex^{-|\bs|^2/2} \ud \bs \le
	\max_{\bs\in F}C(\mbf{b}_{F},\bs,p) |\mbf{b}_{F}|^{-1} \int_{\tilde{T}}v^2(\bx)	\ex^{-|\bx|^2/2}\ud\bx,
	\end{equation}
	with $\theta$ the angle of the `shear movement'.
	The domain of integral on the right-hand side of the last estimate can be majorised  to $T$, while the choice of $|\mbf{b}_F|=\sqrt{p}$ gives
	$
	C(\mbf{b}_{F},\bx_{F},p) (|\mbf{b}_{F}|\sin \theta)^{-1}\le \tilde{c}_{\rm inv}(\delta_T) p^{\frac{3}{2}}.
	$

	The estimate \eqref{inv_est_Gaussian_inf} also holds for $|F|=+\infty$ also, e.g., when $T=\mathbb{R}^d_+$ and $F=\{0\}\times\mathbb{R}^{d-1}_+$; see $T_\infty$ in Figure \ref{fig:meshes} for an illustration. Indeed, for $|F|=+\infty$, we consider bounded hyperplanes $F_m\subset F$, $m\in\mathbb{N}$ with $|F_m|<\infty$ and $F_m\to F$, as $m\to \infty$. Then, with the same meaning as above $|\hat{F}_m|/|F_m|\ge \min_\theta\sin\theta$, uniformly as $m\to\infty$ and, thus, \eqref{magic_inf} holds, giving the same bound \eqref{inv_est_Gaussian_inf}.
\end{proof}

\begin{remark} The last result holds also for `infinite wedge' elements of any angle, such as $T_\infty$ in Figure \ref{fig:meshes}.  Also, note the improved dependence on the polynomial degree $p$ for \eqref{inv_est_Gaussian_inf} compared to \eqref{inv_est_Gaussian}.
\end{remark}

Next, we also prove inverse inequalities for powers of Gaussian-like weights.

\begin{theorem}\label{them:weighted_general_1d_powers}
	For any $s\in \mathcal{P}_p([0,1])$, $p\in\mathbb{N}$, $k\in\mathbb{N}$, and for $A,B,C\ge0$, we have the bound
	\begin{equation}\label{basic_unit_interval_powers}
		s^2(0)\ex^{-C^k/(2k)} \le  (2k\lfloor (A+B)^k/(4k)\rfloor +p+3)^2\int_0^1s^2(x) 	\ex^{-(Ax^2+Bx+C)^k/(2k)}\ud x.
	\end{equation}
\end{theorem}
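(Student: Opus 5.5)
I will mimic the proof of Theorem~\ref{them:weighted_general_1d_full}, building a polynomial minorant of the exponential weight. Set $\phi(x):=(Ax^2+Bx+C)^k/(2k)$ and $\psi(x):=\phi(x)-C^k/(2k)\ge 0$ on $[0,1]$, since $A,B,C\ge 0$. Dividing \eqref{basic_unit_interval_powers} by $\ex^{-C^k/(2k)}$, it suffices to show
\[
s^2(0)\le (2k\lfloor (A+B)^k/(4k)\rfloor+p+3)^2\int_0^1 s^2(x)\ex^{-\psi(x)}\ud x .
\]
The plan is to produce a polynomial $\tilde g$ with $\tilde g(0)=1$ and $0\le \tilde g^{2m}(x)\le \ex^{-\psi(x)}$ on $[0,1]$. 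I would then apply the unweighted trace inverse inequality $|w(0)|^2\le (\deg w+1)^2\norm{w}{L_2((0,1))}^2$ of \cite{warburton2003constants} to $w=s\tilde g^m$.

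The natural choice is $\tilde g(x):=1-q(x)/m$ with $q(x):=(Ax^2+Bx)^k/(4k)$. This is a polynomial of degree $2k$ vanishing at $0$, so $s\tilde g^m$ has degree $p+2km$. With $m:=\lfloor (A+B)^k/(4k)\rfloor+1$, we have $q(x)\le (A+B)^k/(4k)<m$ on $[0,1]$, hence $0<\tilde g\le 1$. The degree count gives $p+2km+1=2k\lfloor (A+B)^k/(4k)\rfloor+p+2k+1$. This matches the stated constant $p+3$ only when $k=1$. So either the statement tacitly absorbs $2k-2$ into the floor term, or the minorant should be taken as a polynomial in $Ax^2+Bx$ of degree $1$ only, after a reduction to $k=1$. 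I would first try the direct route, and if the constant does not match, fall back on a comparison to reduce to Theorem~\ref{them:weighted_general_1d_full}.

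The key inequality is $\tilde g^{2m}\le \ex^{-\psi}$. First, $(1-q/m)^{m}\le \ex^{-q}$, so $\tilde g^{2m}\le \ex^{-2q}$. It therefore suffices that $\psi\le 2q$. That is, with $y:=Ax^2+Bx\ge 0$, I need $\big((y+C)^k-C^k\big)/(2k)\le y^k/(2k)$. This fails when $C>0$ (e.g.\ $k=1$ gives $y/2\le y/2$, fine, but for $k\ge2$ cross terms $C^{k-j}y^j$ appear). This is the main obstacle: superadditivity goes the wrong way, $(y+C)^k\ge y^k+C^k$. To handle it I would instead try the monotonicity trick of the earlier proof. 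Take $g(x):=\tilde g^m(x)\ex^{\psi(x)/2}$ and show $g'\le 0$ by computing
\[
\frac{g'}{g}=\frac{\psi'}{2}-\frac{q'}{\tilde g},
\]
with $q$ redefined as $q:=\big((y+C)^k-C^k\big)/(4k)$. This $q$ is still a polynomial in $x$ vanishing at $0$, of degree $2k$, and $\psi=2q$ exactly. Then $g'/g=q'\,(1-1/\tilde g)\le 0$, because $q'\ge 0$ (as $y'\ge0$ on $[0,1]$) and $0<\tilde g\le1$. The latter requires $m>q(1)$, i.e.\ $m$ exceeding $\big((A+B+C)^k-C^k\big)/(4k)$. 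This last point is exactly where the stated floor $\lfloor (A+B)^k/(4k)\rfloor$ must be reconciled. I expect it to be the delicate step, possibly needing the assumption that the $C$-dependence is harmless or an additional argument bounding $(A+B+C)^k-C^k$.

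Given $g(x)\le g(0)=1$, we get $\tilde g^{2m}\le \ex^{-\psi}$. The inverse inequality applied to $s\tilde g^m$, together with $\tilde g(0)=1$, then concludes the proof, with the constant read off from the degree $p+2km$.
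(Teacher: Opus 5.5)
The reconciliation you leave open cannot be carried out. For $k\ge2$ the stated inequality is false once $C$ is large, so the $C$-dependence is not harmless, and no constant independent of $C$ can work.

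Take $k=2$, $A=0$, $B=1$, $s\equiv1$. Then $\lfloor (A+B)^k/(4k)\rfloor=0$, and after dividing by $\ex^{-C^2/4}$ the claim reads
\[
1\le (p+3)^2\int_0^1 \ex^{-(2Cx+x^2)/4}\ud x<(p+3)^2\int_0^1 \ex^{-Cx/2}\ud x<\frac{2(p+3)^2}{C},
\]
which fails for $C\ge 2(p+3)^2$. For general $k\ge2$, use $(x+C)^k-C^k\ge kC^{k-1}x$ in the same way.

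The paper's proof takes exactly your first choice $\tilde g=1-(Ax^2+Bx)^k/(4mk)$. It asserts that $g'$ carries the sign-definite factor $-(2Ax+B)(Ax^2+Bx+C)^{2k-1}$, but this is only correct for $C=0$. With $y=Ax^2+Bx$ one actually gets
\[
g'=\tfrac14\,y'\,\tilde g^{m-1}\ex^{(y+C)^k/(4k)}\big[(y+C)^{k-1}\tilde g-y^{k-1}\big].
\]
This is positive near $x=0$ when $k\ge2$, $B>0$, $C>0$, which is precisely the superadditivity obstruction you identified. The paper's constant also overlooks that $s\tilde g^m$ has degree $p+2km$, as you noticed, so its $p+3$ is justified only for $k=1$.

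What your argument does prove is correct. Take $q=((y+C)^k-C^k)/(4k)$, so that $\psi=2q$, and $m=\lfloor q(1)\rfloor+1$. The bound $(1-q/m)^m\le \ex^{-q}$ (equivalent to your monotonicity computation) and the unweighted trace inverse inequality then give the estimate with constant $(2k\lfloor((A+B+C)^k-C^k)/(4k)\rfloor+p+2k+1)^2$.

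Your fallback comparison with Theorem~\ref{them:weighted_general_1d_full} removes the surplus $2k-2$. The function $\psi$ is convex on $[0,1]$, being a convex nondecreasing function of the convex function $y$, and $\psi(0)=0$. Hence $\psi(x)\le x\psi(1)$. Applying Theorem~\ref{them:weighted_general_1d_full} with its parameters set to $(0,2\psi(1))$ yields
\[
s^2(0)\ex^{-C^k/(2k)}\le\Big(2\Big\lfloor \frac{(A+B+C)^k-C^k}{4k}\Big\rfloor+p+3\Big)^2\int_0^1 s^2(x)\ex^{-(Ax^2+Bx+C)^k/(2k)}\ud x.
\]
For $k=1$ or $C=0$ this constant is at most the stated one, so the theorem holds in those cases. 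For $k\ge2$ and $C>0$, it is the floor in the statement that must be corrected, not the proof. The corrected version still suffices for the line-segment estimate that follows in the paper, whose constant $C(k,\mbf{b},\bx_0,p)$ already places $|\bx_0|^2$, i.e.\ $C$, inside the floor.
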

\begin{proof}

	Redefining $g:\mathbb{R}\to\mathbb{R}$ with
	$
	g(x) =\tilde{g}^m(x)\ex^{(Ax^2+Bx+C)^k/(4k)}$,
	 whereby
$
	\tilde{g}(x):=1-(Ax^{2}+Bx)^k/(4mk)$,
	for $m\in \mathbb{N}$, to be determined below.
	We compute
	\[
	g'(x)
	= -(2Ax+B)(Ax^2+Bx+C)^{2k-1}\tilde{g}^{m-1}(x)\ex^{(Ax^2+Bx+C)^k/(4k)}/(16km)
	.
	\]
	Since $A,B,C\ge 0$, setting  $m:= \lfloor (A+B)^k/{4k}\rfloor+1$, we have $0\le \tilde{g}(x)$ and, thus,
	$g$ is decreasing. Hence, $g(x)\le g(0)$, for $x\in [0,1]$ and, so,
	$\ex^{-(Ax^2+Bx+C)^k/(2k)}\ge \tilde{g}^{2m}(x)	\ex^{-C^k/(2k)}$
	. Let $s\in \mathcal{P}_p([0,1])$.  A standard one-dimensional trace inverse inequality gives
	\[
	\begin{aligned}
		s^2(0)	\ex^{-C^k/(2k)}=&\ s^2(0) \tilde{g}^{2m}(0)	\ex^{-C^k/(2k)}\\
		\le&\  (2k\lfloor (A+B)^k/(4k)\rfloor +p+3)^2\int_0^1s^2(x) \tilde{g}^{2m}(x)	\ex^{-C^k/(2k)}\ud x,
	\end{aligned}
	\]
	and the result follows by combing the last two inequalities.
\end{proof}

\begin{remark}
	We note a small, yet distinct, difference between the inequality we deduce upon setting $k=1$ on \eqref{basic_unit_interval_powers} compared to \eqref{basic_unit_interval}: \eqref{basic_unit_interval} is sharper for $B<0$.
\end{remark}

Using the last bound to prove the respective inverse inequality for an arbitrary line segment $[\bx_0,\bx_1]$, with endpoints $\bx_0,\bx_1\in\mathbb{R}^d$, again set $\mbf{r}(t)=\bx_0+t \mbf{b}$,   $t\in [0,1]$, with $\mbf{b} :=\bx_1-\bx_0$. As before, let $v\in\mathcal{P}_p(K)$, for an open set $K\subset\mathbb{R}^d$, such that $[\bx_0,\bx_1]\subset K$ and $s(t)=v(\mbf{r}(t))$, $t\in [0,1]$. Defining
\[
C(k,\mbf{b},\bx_0,p):= (2k\lfloor (|\mbf{b}|^2+2|\bx_0\cdot \mbf{b}|+|\bx_0|^2)^k/(4k)\rfloor +p+3)^2,
\]
we employ \eqref{basic_unit_interval_powers} with $A=|\mbf{b}|^2$, $B=2|\bx_0\cdot \mbf{b}|$, and $C=|\bx_0|^2$,  to get
\[
	\begin{aligned}
		v^2(\bx_0)\ex^{-|\bx_0|^{2k}/(2k)}
		=&\ s^2(0)\ex^{-|\bx_0|^{2k}/(2k)}\\
		\le&\   C(k,\mbf{b},\bx_0,p)\int_0^1s^2(t) \ex^{-\big(t^2 |\mbf{b}|^2+2t|\bx_0\cdot \mbf{b}|+|\bx_0|^2\big)^k/(2k)}\ud t\\
		\le&\   C(k,\mbf{b},\bx_0,p)\int_0^1v^2(\mbf{r}(t)) 	\ex^{- |\mbf{r}(t)|^{2k}/(2k)}\ud t\\
		=&\   C(k,\mbf{b},\bx_0,p) |\mbf{b}|^{-1} \int_{[\bx_0,\bx_1]}v^2(\bx)	\ex^{-|\bx|^{2k}/(2k)}\ud\bx.
	\end{aligned}
\]
For the general inverse estimate for elements $T\in\mathcal{T}$, we work as before.

\begin{lemma}\label{lem:inv_est_powers}
	Let $v\in\mathcal{P}_p(T)$, for a simplicial or box-type element $T\in\mathcal{T}$.  Set $\delta_T:=\max\{|\bx|:\bx\in \bar{T}\}$. Then, for each face $F$ of $T$, the following estimate
	\begin{equation}\label{inv_est_powers}
		\int_{F}v^2(\bs)\ex^{-|\bs|^{2k}/(2k)}\ud \bs \le
		c_{\rm inv}(k,\delta_T) \frac{p^2}{h_T} \int_{T}  v^2(\bx)	\ex^{-|\bx|^{2k}/(2k)}\ud\bx,
	\end{equation}
	holds, with
	$
	c_{\rm inv}(k,\delta_T):=M_T^Fh_T \big( (h_T+\delta_T)^{2k}+2p+6\big)^2)/(4\rho_T^{}p^2\min_\theta\sin \theta),
	$ and $\theta$ denoting the angles of $T$ and $\rho_T^{}$ the radius of the largest inscribed ball of $T$.
\end{lemma}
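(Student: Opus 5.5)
The plan is to repeat the proof of Lemma~\ref{lem:inv_est_Gaussian} almost verbatim, replacing the Gaussian segment estimate \eqref{magic_one} by the power-weight segment estimate derived right after Theorem~\ref{them:weighted_general_1d_powers}. That estimate reads
\[
v^2(\bx_0)\ex^{-|\bx_0|^{2k}/(2k)}\le C(k,\mbf{b},\bx_0,p)\,|\mbf{b}|^{-1}\int_{[\bx_0,\bx_0+\mbf{b}]}v^2(\bx)\ex^{-|\bx|^{2k}/(2k)}\ud\bx .
\]

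First, if $T$ is simplicial, I subdivide each face $F\subset\partial T$ into the $M_T^F$ subfaces $F_i$. These are bases of oblique prisms $\underline{T}_i\subset T$ with axes $\mbf{b}_{F_i}$ reaching up to $F_{\rm mid}$. If $T$ is box-type, it is already such a prism.

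Next, I introduce the unit-determinant shear map $R_{\underline{T}_i}$ and apply the segment estimate at each point $\bs=R_{\underline{T}_i}\hat{\bs}\in F_i$ along the segment $[\bs,\bs+\mbf{b}_{F_i}]$. The shear preserves lengths in the direction $\mbf{b}_{F_i}$. I then integrate over $\hat F_i$, use Fubini on the right-angle prism $\hat{\underline{T}}_i$, and change variables back using $|\det R_{\underline{T}_i}|=1$. This gives the analogue of \eqref{basic_inverse_mapped} with weight $\ex^{-|\cdot|^{2k}/(2k)}$ and constant $\max_{\bs\in F_i}C(k,\mbf{b}_{F_i},\bs,p)|\mbf{b}_{F_i}|^{-1}$.

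Finally, I combine this with \eqref{shape_reg2}, i.e.\ $|\hat F_i|/|F_i|\ge\min_\theta\sin\theta$, and \eqref{shape_reg}, i.e.\ $\rho_T\le|\mbf{b}_{F_i}|\le h_T/2$. Summing over $i$ then produces the factor $M_T^F/(\rho_T\min_\theta\sin\theta)$.

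The only genuinely new step, and the one I expect to be the main obstacle, is bounding $C(k,\mbf{b},\bs,p)$ uniformly over $\bs\in F_i$ by the quantity appearing in $c_{\rm inv}(k,\delta_T)$. For this I observe that $|\mbf{b}|^2+2|\bs\cdot\mbf{b}|+|\bs|^2\le(|\mbf{b}|+|\bs|)^2\le(h_T+\delta_T)^2$, since $|\bs|\le\delta_T$ and $|\mbf{b}|\le h_T$. Dropping the integer part gives
\[
2k\lfloor(\cdot)^k/(4k)\rfloor+p+3\le\tfrac12(h_T+\delta_T)^{2k}+p+3=\tfrac12\big((h_T+\delta_T)^{2k}+2p+6\big).
\]
Hence $C\le\big((h_T+\delta_T)^{2k}+2p+6\big)^2/4$.

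Multiplying by $|\mbf{b}_{F_i}|^{-1}\le\rho_T^{-1}$ and $(\min_\theta\sin\theta)^{-1}$, summing the $M_T^F$ contributions, and writing the result as $c_{\rm inv}(k,\delta_T)\,p^2/h_T$ then yields \eqref{inv_est_powers} with exactly the stated constant. The factors $h_T$ and $p^2$ are inserted and divided out only to match the normalisation of the statement.

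One subtlety I will check is that the segment estimate was derived with $B=2|\bx_0\cdot\mbf{b}|$ and $C=|\bx_0|^2$. It uses $|\mbf{r}(t)|^2\le t^2|\mbf{b}|^2+2t|\bx_0\cdot\mbf{b}|+|\bx_0|^2$, so the weight comparison goes in the correct direction, $\ex^{-(\cdots)^k/(2k)}\le\ex^{-|\mbf{r}(t)|^{2k}/(2k)}$. Consequently no sign case distinction of the type needed in Theorem~\ref{them:weighted_general_1d_full} arises.
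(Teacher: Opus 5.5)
Your proposal is correct and is essentially the paper's own proof: the same prism subdivision and shear map, the power-weight segment estimate in place of \eqref{magic_one}, the bound $|\mbf{b}|^2+2|\bs\cdot\mbf{b}|+|\bs|^2\le(h_T+\delta_T)^2$ with the integer part dropped, then $|\mbf{b}_{F_i}|^{-1}\le\rho_T^{-1}$ and summation over the $M_T^F$ subfaces. One caveat, shared with the paper, concerns the input you take as given: the proof of Theorem~\ref{them:weighted_general_1d_powers} only works for $C=0$, since for $C>0$, $k\ge 2$ the auxiliary $g$ is increasing near $0$, and $k=2$, $p=1$, $A=0$, $B=1$, $C=100$, $s\equiv 1$ in fact violates the theorem; the segment estimate you need still holds, because $(Ax^2+Bx+C)^k-C^k\le k(A+B+C)^{k-1}(Ax^2+Bx)$ lets you apply Theorem~\ref{them:weighted_general_1d_full} with $A,B$ multiplied by $(A+B+C)^{k-1}$, which gives the factor $\big((A+B+C)^k/2+p+3\big)^2\le\big((h_T+\delta_T)^{2k}+2p+6\big)^2/4$, exactly the constant of the lemma.
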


\begin{proof}
Working as in the proof of Lemma \ref{lem:inv_est_Gaussian}, we arrive at
	\[
	\sin\theta\!	\int_{F_i}\!v^2(\bs)\ex^{-|\bs|^{2k}/(2k)}\ud \bs \le
		\! \max_{i=1,\dots, M_T^F}C(k,\mbf{b}_{F_i},\bx_{F_i},p)  |\mbf{b}_{F_i}|^{-1}\!\int_{\underline{T}_i}\!  v^2(\bx)	\ex^{-|\bx|^{2k}/(2k)}\ud\bx.
	\]
	Then, \eqref{shape_reg} implies
	\[
	\begin{aligned}
		\frac{C(k,\mbf{b}_{F_i},\bx_{F_i},p)  |\mbf{b}_{F_i}|^{-1}}{\sin \theta}
		\le&\  \frac{1}{\sin \theta\rho_T}\Big( \frac{ (h_T+\delta_T)^{2k}}{2} +p+3\Big)^2\le  \frac{c_{\rm inv}(k,\delta_T)}{M_T^F}\frac{p^2}{h_T}.
	\end{aligned}
	\]
	Summing up for $i=1,\dots,M_T^F$, we arrive at the desired bound.
\end{proof}

\begin{lemma}
	For `infinite' elements with base $F$ of the form \eqref{inf_prism},
we have
	\begin{equation}\label{inv_est_Gaussian_inf}
		\int_{F}v^2(\bs)\ex^{-|\bs|^{2k}/(2k)}\ud \bs \le
		\tilde{c}_{\rm inv}(k,\delta_T) p^{2-1/(2k)} \int_{T}  v^2(\bx)	\ex^{-|\bx|^{2k}/(2k)}\ud\bx,
	\end{equation}
	with  $
	\tilde{c}_{\rm inv}(k,\delta_T):=\big(  (p^{1/(2k)}+\delta_T)^{2k} +2p+6\big)^2/(4p^2\min_\theta\sin\theta)$.
\end{lemma}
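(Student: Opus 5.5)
The plan is to mirror the proof of Lemma \ref{inf_strip_Gaussian} for the Gaussian weight, replacing the one-dimensional estimate \eqref{basic_unit_interval} by its power-weight analogue \eqref{basic_unit_interval_powers} and the segment estimate derived from it. Concretely, I will work with the truncated prism
\[
\tilde{T}:=\{\bx=\bx_{F}+t\mbf{b}_{F}: \bx_{F}\in F,\ t\in(0,1]\}\subset T,
\]
and rescale the axis vector so that $|\mbf{b}_F|=p^{1/(2k)}$. This choice balances the two competing factors, $C(k,\mbf{b}_F,\bs,p)$, which grows like $|\mbf{b}_F|^{2k}$, and $|\mbf{b}_F|^{-1}$, which decays. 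For $k=1$ it reduces to the choice $|\mbf{b}_F|=\sqrt{p}$ used before.

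First, for each $\bs\in F$, I apply the segment inequality preceding Lemma \ref{lem:inv_est_powers} with $\bx_0=\bs$ and $\mbf{b}=\mbf{b}_F$. This gives
\[
v^2(\bs)\ex^{-|\bs|^{2k}/(2k)}\le C(k,\mbf{b}_F,\bs,p)\,|\mbf{b}_F|^{-1}\int_{[\bs,\bs+\mbf{b}_F]}v^2(\by)\ex^{-|\by|^{2k}/(2k)}\ud\by .
\]
Next, assuming $|F|<\infty$, I use the shear map $\mathcal{R}_{\tilde{T}}$ onto the right prism $\hat{T}$, exactly as in Lemma \ref{lem:inv_est_Gaussian}. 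The map has unit determinant and preserves length along $\mbf{b}_F$, so integrating over $\hat F$ and changing variables back with \eqref{shape_reg2} gives the analogue of \eqref{magic_inf}:
\[
\min_\theta\sin\theta\int_F v^2\ex^{-|\bs|^{2k}/(2k)}\ud\bs\le \max_{\bs\in F}C(k,\mbf{b}_F,\bs,p)\,|\mbf{b}_F|^{-1}\int_{\tilde T}v^2\ex^{-|\bx|^{2k}/(2k)}\ud\bx .
\]
I then enlarge the integration domain from $\tilde T$ to $T$.

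The remaining step is bounding the constant. Using $|\bs|\le\delta_T$ and Cauchy--Schwarz, $|\mbf{b}_F|^2+2|\bs\cdot\mbf{b}_F|+|\bs|^2\le(|\mbf{b}_F|+\delta_T)^2$. Dropping the integer part then gives
\[
C\le\big(\tfrac12(p^{1/(2k)}+\delta_T)^{2k}+p+3\big)^2=\tfrac14\big((p^{1/(2k)}+\delta_T)^{2k}+2p+6\big)^2 .
\]
Multiplying by $|\mbf{b}_F|^{-1}=p^{-1/(2k)}$ and dividing by $\min_\theta\sin\theta$ yields exactly $\tilde{c}_{\rm inv}(k,\delta_T)\,p^{2-1/(2k)}$, since the $p^{2}$ in the denominator of $\tilde c_{\rm inv}$ is compensated by the factor $p^2$.

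The case $|F|=+\infty$ follows by the same exhaustion argument as in Lemma \ref{inf_strip_Gaussian}. I take bounded $F_m\uparrow F$ with $|\hat F_m|/|F_m|\ge\min_\theta\sin\theta$ uniformly in $m$, and pass to the limit by monotone convergence on the left-hand side. I expect the main obstacle to be this step. The quantity $\delta_T$ is then $\sup_{\bs\in F}|\bs|$, and it is infinite if $F$ is unbounded. So the constant must be interpreted via the supremum over the relevant face points, as implicitly done in the Gaussian case, or the result stated only for bounded $F$. Everything else is a direct transcription of the earlier proofs.
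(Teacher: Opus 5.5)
Your proposal is correct and is essentially the paper's proof written out in full. Like the paper, you truncate to the unit-parameter prism with $|\mbf{b}_F|=p^{1/(2k)}$, apply the power-weight segment estimate together with the shear argument behind \eqref{magic_inf}, majorise the integration domain to $T$, and bound $C(k,\mbf{b}_F,\bs,p)\,|\mbf{b}_F|^{-1}/\min_\theta\sin\theta$ by $\tilde{c}_{\rm inv}(k,\delta_T)\,p^{2-1/(2k)}$. Your caveat about $|F|=+\infty$ is also well founded: the paper only invokes the bounded-base argument here, and since $|\bs|^2$ (not just $\bs\cdot\mbf{b}_F$) enters the constant, $\delta_T$ must be read as $\max_{\bs\in\bar F}|\bs|$, so the estimate is meaningful only for bounded $F$.
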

\begin{proof}
	Working completely analogously to the proof of \eqref{magic_inf},
	majorising the integration domains on right-hand side to $T$, and observing that the choice of $|\mbf{b}_F|=p^{1/(2k)}$ gives
	$
(\sin\theta)^{-1}	C(k,\mbf{b}_{F},\bx_{F},p) |\mbf{b}_{F}|^{-1}\le
	\tilde{c}_{\rm inv}(k,\delta_T) p^{2-1/(2k)}$,
	the result follows.
\end{proof}

\begin{remark}\label{rem:sharper}
	We observe a favourable dependence on $\delta_T$ in $c_{\rm inv}(\delta_T)$ of Lemma \ref{lem:inv_est_Gaussian} against $c_{\rm inv}(k,\delta_T)$ of Lemma \ref{lem:inv_est_powers} for $k=1$. The same observation applies to the constants $\tilde{c}_{\rm inv}(\delta_T)$ and $\tilde{c}_{\rm inv}(k,\delta_T)$ of the respective estimates for the `infinite' elements.
\end{remark}

We now discuss trace inverse inequalities with weight $\ex^{-E}$, for $E$ as in \eqref{exponent}.
%
To that end, we consider the family of general potentials satisfying  \eqref{eq:Vgrowth}.
We begin by observing that when $V(x)=c+|x|^{2k}/(2k)$ for $r\in\mathbb{N}$ and any $c\in\mathbb{R}$, then  \eqref{eq:Vgrowth} holds with $C_0=2k-1$. At the same time, $V(x)=\ex^{|x|^2}$ does not satisfy \eqref{eq:Vgrowth}. Roughly speaking,  \eqref{eq:Vgrowth} is satisfied by at least superlinear and at most exponentially growing $V(x)$ as $|x|\to\infty$.

\begin{lemma}\label{lem:gen_in_est_final}
	Set $G_k(x):=|x|^{2k}/(2k)$, $k\in\mathbb{N}$, and assume that $V$ satisfies
	\begin{equation}\label{growth_V_second}
	G_{k}(x)-c_V^{}\le V(x)\le C_V+	G_{k}(x),\qquad x\in\mathbb{R}^d,
	\end{equation} for given constants  $c_V^{},C_V$.  (In other words, $V(x)=G_{k}(x)+\phi(x)$ with $c_V^{}\le \phi(x)\le C_V$.) Assume further that $T=T_v\times T_x$ with $T_v,T_x\subset \mathbb{R}^d$, with $T_v,T_x$ finite or `infinite' elements of the form \eqref{inf_prism}, as above.  Then, for any $W\in\mathcal{P}_p(T)$, and $k\ge2$, we have
	\begin{equation}\label{inv_est_final}
	\int_{F}W^2\ud \nu  \le C_{\rm inv}(T,p,k)\int_{T}W^2\ud \mu,
	\end{equation}
	with
\[
	C_{\rm inv}(T,p,k):=
	\begin{cases}
	\ex^{c_V^{}+C_V}	c_{\rm inv}(k,\delta_T) p^2h_T^{-1}, & \text{for $F= T_v\times F_x$, and } |T_x|<+\infty; \\
	\ex^{c_V^{}+C_V}	\tilde{c}_{\rm inv}(k,\delta_T) p^{2-1/(2k)}, & \text{for $F= T_v\times F_x$, and $T_x$ as \eqref{inf_prism}} ;\\
	 c_{\rm inv}(\delta_T) p^2h_T^{-1}, & \text{for $F= F_v\times T_x$,  and } |T_v|<+\infty; \\
	\tilde{c}_{\rm inv}(\delta_T) p^{\frac{3}{2}}, & \text{for $F= F_v\times T_x$, and $T_v$ as \eqref{inf_prism}}.
	\end{cases}
\]
For $k=1$, $c_{\rm inv}(k,\delta_T)$ and 	$\tilde{c}_{\rm inv}(k,\delta_T)$ can be replaced by the sharper $c_{\rm inv}(\delta_T)$ and $\tilde{c}_{\rm inv}(\delta_T)$, respectively. Moreover, for the case of Gaussian confining potential $V(x)=G_1(x)$, \eqref{inv_est_final} holds also for $(v,x)$-unstructured elements.
\end{lemma}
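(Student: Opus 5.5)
The plan is to exploit the product structure of the weight. With $E(v,x)=\frac12|v|^2+V(x)$, the measures factorise:
\[
\ud\mu=\ex^{-|v|^2/2}\ex^{-V(x)}\ud v\ud x,\qquad \ud\nu=\ex^{-E}\ud s .
\]
On a tensor-product element $T=T_v\times T_x$, every face is either $F=F_v\times T_x$ or $F=T_v\times F_x$. I treat the two cases separately, applying the one-factor inverse estimates of this section in the variable normal to the face. The other variable is left untouched, and I integrate over it at the end (a Fubini argument).

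\emph{Case $F=F_v\times T_x$.} Fix $x\in T_x$. Then $v\mapsto W(v,x)$ lies in $\mathcal{P}_p(T_v)$. If $|T_v|<\infty$, I apply Lemma \ref{lem:inv_est_Gaussian} on $T_v$ with face $F_v$; if $T_v$ is of the form \eqref{inf_prism}, I apply Lemma \ref{inf_strip_Gaussian}. This gives
\[
\int_{F_v}W^2(\bs,x)\ex^{-|\bs|^2/2}\ud\bs\le c\int_{T_v}W^2(v,x)\ex^{-|v|^2/2}\ud v,
\]
with $c=c_{\rm inv}(\delta_T)p^2h_T^{-1}$ or $c=\tilde c_{\rm inv}(\delta_T)p^{3/2}$, respectively. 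These constants are increasing in $\delta$, and $\delta_{T_v}\le\delta_T$, so the element-level constant can be used. Multiplying by $\ex^{-V(x)}$ and integrating over $T_x$ yields \eqref{inv_est_final} with no factor depending on $V$. This accounts for the absence of $\ex^{c_V+C_V}$ in the last two lines of the definition of $C_{\rm inv}$.

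\emph{Case $F=T_v\times F_x$.} Fix $v\in T_v$. The weight in $x$ is $\ex^{-V}$, which by \eqref{growth_V_second} satisfies
\[
\ex^{-C_V}\ex^{-G_k}\le \ex^{-V}\le \ex^{c_V}\ex^{-G_k}.
\]
I bound the face integral above by $\ex^{c_V}$ times the $\ex^{-G_k}$-weighted integral. I then apply Lemma \ref{lem:inv_est_powers} (finite $T_x$) or its `infinite'-element analogue (for $T_x$ as in \eqref{inf_prism}) to $x\mapsto W(v,x)$. Finally I bound the $\ex^{-G_k}$-weighted volume integral by $\ex^{C_V}$ times the $\ex^{-V}$-weighted one. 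This gives the factor $\ex^{c_V+C_V}$. Multiplying by $\ex^{-|v|^2/2}$ and integrating over $T_v$ completes this case. For $k=1$ the same argument works with Lemmas \ref{lem:inv_est_Gaussian} and \ref{inf_strip_Gaussian} in place of the power-weight lemmas, giving the sharper constants. The restriction $k\ge2$ in the main statement only serves to separate this Gaussian refinement.

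The main remaining point is the unstructured claim for $V=G_1$. There $E=\frac12(|v|^2+|x|^2)=\frac12|\bx|^2$ with $\bx=(v,x)\in\mathbb{R}^{2d}$, a pure Gaussian in $2d$ dimensions. So I would apply Lemmas \ref{lem:inv_est_Gaussian} and \ref{inf_strip_Gaussian} directly in $\mathbb{R}^{2d}$ to an arbitrary simplicial, box-type or infinite element. The only care needed is to check that the subface and oblique-prism construction in the proof of Lemma \ref{lem:inv_est_Gaussian} is dimension-independent, and that the constants then correspond to the $2d$-dimensional geometry. The other potential pitfall is the measurability and Fubini step on unbounded factors. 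This is harmless because polynomials are integrable against the weights, so both sides of each inequality are finite.
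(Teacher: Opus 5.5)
Your proposal is correct and follows essentially the paper's proof. You factorise $\ud\mu$ and $\ud\nu$ on $T=T_v\times T_x$ and apply Lemma~\ref{lem:inv_est_Gaussian} or Lemma~\ref{inf_strip_Gaussian} in $v$ for faces $F_v\times T_x$; for faces $T_v\times F_x$ you sandwich $\ex^{-V}$ between $\ex^{-C_V}\ex^{-G_k}$ and $\ex^{c_V}\ex^{-G_k}$ around the power-weight estimates in $x$, which gives exactly the factor $\ex^{c_V+C_V}$. Your treatment of $k=1$ and of the unstructured Gaussian case (reading $\ex^{-E}=\ex^{-|(v,x)|^2/2}$ as a Gaussian weight on $\mathbb{R}^{2d}$) supplies the natural justification for claims the paper states without separate proof.
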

\begin{proof}
First, let $F= T_v\times F_x\subset \partial T$, with $F_x\subset \partial T_x$ face of $T_x$. Then, we have
\begin{equation}\label{tensor_inv}
\int_{F}W^2\ud \nu =\int_{T_v}\ex^{-|v|^2/2}\int_{F_x}W^2(v,x)\ex^{-V(x)}\ud s_x \ud v,
\end{equation}
with $\ud s_x$ denoting the surface differential of $F_x$. Then, for every $v\in T_v$, we have
\begin{equation}\label{bound_tensor_inv}
\begin{aligned}
\int_{F_x}W^2(v,x)\ex^{-V(x)}\ud s_x\le&\  \ex^{c_V^{}}\int_{F_x}W^2(v,x)\ex^{-G_{k}(x)}\ud s_x\\
\le&\ \ex^{c_V^{}}C_{\rm inv}^x(T,p,k)\int_{T_x}W^2(v,x)\ex^{-G_{k}(x)}\ud x\\
\le&\ \ex^{c_V^{}+C_V}C_{\rm inv}^x(T,p,k)\int_{T_x}W^2(v,x)\ex^{-V(x)}\ud x,
\end{aligned}
\end{equation}
with $C_{\rm inv}^x(T,p,k):=	c_{\rm inv}(k,\delta_T) p^2h_T^{-1}$ if $|T_x|<+\infty$, or, for $T_x$  `infinite', we set $C_{\rm inv}^x(T,p,k):=	\tilde{c}_{\rm inv}(k,\delta_T) p^{2-1/(2k)}$. Using \eqref{bound_tensor_inv}  on  \eqref{tensor_inv} gives the required bound.

Now, let $F= F_v\times T_x\subset \partial T$, with $F_v\subset \partial T_v$ face of $T_v$. Then, we have
\begin{equation}\label{tensor_inv_v}
	\begin{aligned}
	\int_{F}W^2\ud \nu =&\ \int_{T_x}\ex^{-V(x)}\int_{F_v}W^2(v,x)\ex^{-|v|^2/2}\ud s_v \ud x\\
	\le&\  C_{\rm inv}^v(T,p)\int_{T_x}\ex^{-V(x)}\int_{T_v}W^2(v,x)\ex^{-|v|^2/2}\ud v \ud x,
\end{aligned}
\end{equation}
and the result follows with $C_{\rm inv}^v(T,p):=	c_{\rm inv}(\delta_T) p^2h_T^{-1} $ if $|T_x|<+\infty$, or $C_{\rm inv}^v(T,p):=\tilde{c}_{\rm inv}(\delta_T) p^{\frac{3}{2}} $ for $T_v$ `infinite' element of the form \eqref{inf_prism}.
\end{proof}

\begin{remark}
The growth condition \eqref{growth_V_second} for $V$ is related yet different to \eqref{eq:Vgrowth}. If $\phi \in C^2(\mathbb{R}^d)$ and has bounded gradient and Hessian, then  \eqref{growth_V_second}  implies \eqref{eq:Vgrowth}.
\end{remark}

\section{Numerical hypocoercivity} \label{sec: Numerical hypocoercivity}
Equipped with the above technical results, we now prove the (hypo)coercivity of the Galerkin method \eqref{eq:method_semi_discrete}. We start by considering an extension of Lemma \ref{lem:villani_control}, i.e., \cite[Lemma A.24]{villani}, for element-wise $H^1$-functions.
  \begin{lemma}\label{lem:villani_control_discrete}
 	Let $g\in [H^1(\mu,\mathcal{T})]^d$, with $H^1(\mu,\mathcal{T}):=\{v\in L_2(\mu): \!\int_T\!|\nvx^hv|^2\ud \mu_T\!<\!+\infty, T\in \mathcal{T} \}$, $\mu_T$ the restriction of $\mu$ to $T$, and $V$  satisfying \eqref{eq:Vgrowth}. Then,  we have
 	\begin{equation}\label{eq:villani_control_discrete}
 		\norm{\mathcal{H}(V) g}{L_2(\mu)}^2  \le C_1 \big(\norm{g}{L_2(\mu)}^2 + \norm{\nx^h g}{L_2(\mu)}^2\big)- 8C_0^2\int_{\Gamma} \av{g}^T\jump{g}_x \nabla V\ud \nu ,
 	\end{equation}
 	with $C_1:= \max\{2C_0^2(1+2\sqrt{d}+2dC_0^2), 16C_0^2\}$, for $C_0$ as in \eqref{eq:Vgrowth}. In the special case of a Gaussian confining potential $V(x)=|x|^2/2$, we have $C_1=16$.
 \end{lemma}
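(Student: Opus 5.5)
The plan is to reproduce Villani's proof of Lemma~\ref{lem:villani_control}, but element by element. Every integration by parts in $x$ will be done on a single $T\in\mathcal{T}$, and the resulting boundary terms will be collected on the skeleton $\Gamma$. The underlying idea is the same as in the continuous case. By \eqref{eq:Vgrowth} we have $|\mathcal{H}(V)g|^2\le 2C_0^2(1+|\nabla V|^2)|g|^2$, so it suffices to control the weighted quantity $\int |\nabla V|^2|g|^2\ud\mu$. This is done through the identity $\nabla V\,\ex^{-V}=-\nx \ex^{-V}$, since the Gaussian factor in $v$ plays no role in the $x$-integrations.

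First step. On each $T$, write
\[
\int_T |\nabla V|^2|g|^2\ud\mu=-\int_T (\nabla V|g|^2)\cdot\nx\ex^{-V}\,\ex^{-|v|^2/2}\ud v\ud x,
\]
and integrate by parts in $x$ on $T$. This produces three volume terms, $\int_T \Delta V|g|^2\ud\mu$ and $2\int_T g^T(\nx^h g)\nabla V\ud\mu$, plus the boundary contribution $-\int_{\partial T}|g|^2\nabla V\cdot n_x\ud\nu$. Summing over $T$, and using the continuity of $V$ and $\nabla V$ across interfaces, the boundary terms combine into $\int_\Gamma \jump{|g|^2}_x\nabla V\ud\nu$. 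The algebraic identity $\jump{|g|^2}_x=2\av{g}^T\jump{g}_x$ (the product rule for jumps, valid facewise since $n_x^-=-n_x^+$) then turns this into the skeleton term $\int_\Gamma \av{g}^T\jump{g}_x\nabla V\ud\nu$. Infinite elements should cause no difficulty, because the weight $\ex^{-V}$ decays and $g\in H^1(\mu,\mathcal{T})$. To be rigorous, one can truncate with a cutoff, or argue first for polynomials or smooth functions on each element and then pass to the limit by density.

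Second step. Bound the volume terms. Use $|\Delta V|\le\sqrt d\,|\mathcal{H}(V)|_F\le\sqrt d\,C_0(1+|\nabla V|)$, then Young's inequality, to write $\int|\Delta V||g|^2\ud\mu\le \sqrt d C_0\norm{g}{L_2(\mu)}^2+\tfrac14\int|\nabla V|^2|g|^2\ud\mu+dC_0^2\norm{g}{L_2(\mu)}^2$. Similarly, $2\int g^T(\nx^h g)\nabla V\ud\mu\le \tfrac14\int|\nabla V|^2|g|^2\ud\mu+4\norm{\nx^h g}{L_2(\mu)}^2$. Absorbing the two $\tfrac14$ terms into the left-hand side gives
\[
\tfrac12\int|\nabla V|^2|g|^2\ud\mu\le (\sqrt d C_0+dC_0^2)\norm{g}{L_2(\mu)}^2+4\norm{\nx^h g}{L_2(\mu)}^2-\int_\Gamma\av{g}^T\jump{g}_x\nabla V\ud\nu.
\]
The sign of the skeleton term still has to be checked against the orientation convention. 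Multiplying through by $4C_0^2$ and inserting into $\norm{\mathcal{H}(V)g}{L_2(\mu)}^2\le 2C_0^2\norm{g}{L_2(\mu)}^2+2C_0^2\int|\nabla V|^2|g|^2\ud\mu$ yields \eqref{eq:villani_control_discrete}. The skeleton term then carries coefficient $-8C_0^2$, the $\norm{g}{L_2(\mu)}^2$ term carries $2C_0^2(1+2\sqrt d C_0+2dC_0^2)$, and the $\norm{\nx^h g}{L_2(\mu)}^2$ term carries $16C_0^2$. Taking the maximum gives $C_1$; the Young weights may need adjusting to match the stated constant exactly.

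Gaussian case. For $V=|x|^2/2$ we have $\mathcal{H}(V)=I$, and the left-hand side is simply $\norm{g}{L_2(\mu)}^2$. One checks that $C_0=1$ works in \eqref{eq:Vgrowth}, and the claim $C_1=16$ follows from the general formula, or more directly from the fact that $\Delta V=d$ is constant.

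Main obstacle. The delicate part is the bookkeeping of the interface terms: signs, the orientation of $n_x^\pm$, the tensor-valued jump $\jump{g}_x$ contracted with $\nabla V$, and faces where $n_x=0$, which contribute nothing. Justifying the integrations by parts on the unbounded elements also needs some care. Everything else is Villani's argument verbatim.
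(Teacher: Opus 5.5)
Your route is the paper's, step for step. You integrate by parts elementwise via $\nabla V\ex^{-V}=-\nx\ex^{-V}$, apply Young's inequality with weights $\tfrac14,\tfrac14$, and finish with $|\mathcal{H}(V)g|^2\le 2C_0^2(1+|\nabla V|^2)|g|^2$. Once the interface term is corrected, the Young step reproduces exactly the paper's bound \eqref{magic_growth}.

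The argument as written goes wrong at the interface term you flagged. By the definition of the jump, $\sum_{T}\int_{\partial T}|g|^2\,\nabla V\cdot n_x\ud\nu=\int_\Gamma\jump{|g|^2}_x\cdot\nabla V\ud\nu$. The boundary contribution is therefore $-\int_\Gamma\jump{|g|^2}_x\cdot\nabla V\ud\nu=-2\int_\Gamma\av{g}^T\jump{g}_x\nabla V\ud\nu$. In your first step you lose the minus sign. In the second step you lose the factor $2$ from your own (correct) identity $\jump{|g|^2}_x=2\av{g}^T\jump{g}_x$. As a result, your displayed intermediate inequality, multiplied by $4C_0^2$, gives a skeleton coefficient of $-4C_0^2$, not the $-8C_0^2$ you assert. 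The correct intermediate bound is
\[
\tfrac12\int|\nabla V|^2|g|^2\ud\mu\le(\sqrt d\,C_0+dC_0^2)\norm{g}{L_2(\mu)}^2+4\norm{\nx^h g}{L_2(\mu)}^2-2\int_\Gamma\av{g}^T\jump{g}_x\nabla V\ud\nu,
\]
and $-8C_0^2$ follows from it. This matters: the skeleton term has no sign, so its coefficient cannot be adjusted after the fact. It is also exactly the term the $\tau$-part of $s_h$ must cancel in Lemma \ref{lem:hypoco_discrete}.

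Two smaller points. First, your coefficient $2C_0^2(1+2\sqrt d\,C_0+2dC_0^2)$ for $\norm{g}{L_2(\mu)}^2$ is exactly what the paper's own proof yields. The statement's $2\sqrt d$, without $C_0$, is covered by this argument only when $C_0\le 1$, so you do not need to re-tune any Young weights to match the paper's proof. Second, for $V=|x|^2/2$ one has $|\mathcal{H}(V)|_F=|I|_F=\sqrt d$, so $C_0=1$ is admissible in \eqref{eq:Vgrowth} only for $d=1$. That is the setting of the experiments, and there indeed $C_1=\max\{10,16\}=16$.
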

 \begin{proof}
 	Observing that $-\nabla \ex^{-V}=\nabla V  \ex^{-V}$, we have
 	\[
 	\begin{aligned}
 		\norm{|\nabla V| g}{L_2(\mu)}^2=
 		&
 		\int (\nabla V\ex^{-E})\cdot( \nabla V|g|^2) \ud v\ud x\\
 		= & \int \Delta  V |g|^2\ud \mu+2 \int \nabla V\cdot (\nx^h  g)^T g\ud \mu
 		-\sum_{T\in\mathcal{T}}\int_{\partial T} (\nabla V\cdot n_x)|g|^2\ud \nu
 	\end{aligned}
 	\]
 	Since,  $|\Delta V|^2 \leq d|\mathcal{H}( V)|^2 \leq d C_0^2(1+|\nabla V|)^2 $, by \eqref{eq:Vgrowth}, we infer
 	\[
 	\begin{aligned}
 		\norm{|\nabla V| g}{L_2(\mu)}^2
 		\le &\ 	 \sqrt{d}C_0 \norm{g}{L_2(\mu)}^2 +\norm{|\nabla V|g}{L_2(\mu)}\big( \sqrt{d}C_0\norm{ g}{L_2(\mu)} +2\norm{\nx^h  g}{L_2(\mu)}\big)\\
 		&-2\int_{\Gamma} \av{g}^T\jump{g}_x \nabla V\ud \nu,
 	\end{aligned}
 	\]
 	with $\nx g$ denoting the Jacobian of $g$ with respect to $x$, noting the identity
 	\[
 	-\sum_{T\in\mathcal{T}}\int_{\partial T} (\nabla V\cdot n_x)|g|^2\ud \nu = 
 	-\sum_{T\in\mathcal{T}}\int_{\partial T} g^T(g\otimes n_x) \nabla V\ud \nu = -2\int_{\Gamma} \av{g}^T\jump{g}_x \nabla V\ud \nu,
 	\]
 	and using \eqref{eq:Vgrowth}, along with the Cauchy-Schwarz inequality, respectively. Applying Young's inequality on  $\norm{|\nabla V|g}{L_2(\mu)}\big( \sqrt{d} C_0\norm{ g}{L_2(\mu)} +2\norm{\nx^h  g}{L_2(\mu)}\big)$, gives
 	\begin{equation}\label{magic_growth}
 			\norm{|\nabla V| g}{L_2(\mu)}^2  \le (2\sqrt{d}C_0 + 2dC_0^2) \norm{g}{L_2(\mu)}^2 +8 \norm{\nx^h g}{L_2(\mu)}^2-4\int_{\Gamma} \av{g}^T\jump{g}_x \nabla V\ud \nu.
 	\end{equation}
 	Finally, we resort once more into \eqref{eq:Vgrowth}, which implies $	\norm{\mathcal{H}( V) g}{L_2(\mu)}^2\le 2C_0^2\norm{g}{L_2(\mu)}^2+2C_0^2\norm{|\nabla V| g}{L_2(\mu)}^2$ and combine the two estimates.
 \end{proof}

\begin{remark}
The bound \eqref{eq:villani_control_discrete} required for $g=\nv U$ below. If the mesh is of $(v,x)$-tensor-product type, then $\jump{\nv U}_x=0$, and the last term on the right-hand side of \eqref{eq:villani_control_discrete} vanishes.
\end{remark}

\begin{lemma}\label{lem:hypoco_discrete}  Assume that \eqref{eq:Vgrowth} and \eqref{growth_V_second} hold. Define
	$\sigma:\Gamma\to\mathbb{R} $ by
		$$
		\sigma|_F:=  2 (c_{hc,2}^{-1}+c_{hc,3}^{-1}) \max\big\{m_{T}C_{\rm inv}(T,p,k),m_{T'}C_{\rm inv}(T',p,k)\big\},
		$$
 with $C_{\rm inv}(T,p,k)$ as in Lemma \ref{lem:gen_in_est_final}, for each element face $F\subset\partial T\cap \partial T'$, shared by two adjacent elements $T,T'\in\mathcal{T}$, and $m_{T}$ denoting the number of $(2d-1)$-dimensional faces of $T$. Select $\tau=25\alpha^3C_0/(144C_1)$ for $(v,x)$-unstructured meshes, or $\tau=0$ for  $(v,x)$-tensor-product ones.  Then,
	there exists $\mathcal{A}$ with $\alpha\gamma-\beta^2\ge 0$, and a $\tau$ depending only $\mathcal{A}$, $C_1$ and $C_2$  from Lemma \ref{lem:villani_control}, for which the estimate
	\[
	\begin{aligned}
		a_h(W,W)+s_h(W,W)\!\ge\!
		&\ c_{hc,1}\big( \norm{\nv W}{L_2(\mu)}^2 \!+\!\beta \norm{\nx W}{L_2(\mu)}^2\big)
		+c_{hc,2}\frac{\alpha}{2} \norm{\nv^h\nv^T W}{L_2(\mu)}^2 \\
		&	+ c_{hc,3}\frac{\gamma}{2}\norm{\nx^h\nv^T W}{L_2(\mu)}^2
		+ \frac{1}{2}\sum_{j=1}^d\int_{\Gamma}\sigma |\sqrt{\mathcal{C}}\jump{\nvx W}_{v_j}|^2\ud \nu,
	\end{aligned}
	\]
	holds for the same positive constants $c_{hc,i}$, $i=1,2,3$, as in Lemma \ref{lem:hypoco}, depending only on $C_1$  from Lemma \ref{lem:villani_control_discrete}.
\end{lemma}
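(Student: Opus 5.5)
The plan is to mirror the proof of Lemma \ref{lem:hypoco} term by term, and to check that every extra term created by non-conformity is either cancelled by $s_h$ or absorbed by the penalty. We set $U=W\in V_h$ and treat $a_h(W,W)+s_h(W,W)$ in four groups.

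\emph{Transport terms.} Since $W$ is continuous and the measure is $\ud\mu$, we still have $((\mathcal{V}\cdot\nvx)W,W)_{L_2(\mu)}=0$. For the second term, $\nvx W$ is only piecewise smooth. Integrating $\frac12(\mathcal{V}\cdot\nvx^h)|\sqrt{\mathcal{A}}\nvx W|^2$ element-wise, and using $\nvx\cdot(\mathcal{V}\ex^{-E})=0$, leaves only the boundary contributions
\[
\tfrac12\sum_T\int_{\partial T}(\mathcal{V}\cdot\mbf{n})\,|\sqrt{\mathcal{A}}\nvx W|^2\ud\nu .
\]
By the identity $a^+\!\cdot\!a^+-a^-\!\cdot\!a^-=2\av{a}\cdot(a^+-a^-)$, this equals $\int_\Gamma\av{\nvx W}^T\mathcal{A}\jump{\nvx W}\mathcal{V}\ud\nu$. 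The first term of $s_h$ cancels it exactly.

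\emph{Hessian terms.} Next we bound the $\mathcal{H}(V)$ terms in $(\mathcal{B}\nvx W,\nvx W)_{L_2(\mu)}$, exactly as in \eqref{eq:HV_one}--\eqref{eq:HV_two}, but now with Lemma \ref{lem:villani_control_discrete} applied to $g=\nv W$ in place of Lemma \ref{lem:villani_control}. This produces the same volume bounds with $\nx^h\nv^TW$, plus an extra face term proportional to $-\int_\Gamma\av{\nv W}^T\jump{\nv W}_x\nabla V\ud\nu$. With the parameter choices of Lemma \ref{lem:hypoco} ($\epsilon_1=(C_1\alpha)^{-1}$, $\epsilon_2=(2C_1\alpha)^{-1}$, $\beta=\alpha^2/2$, $\gamma=2\alpha^3/3$), the coefficient of this face term is a fixed multiple of $\alpha^3C_0^2/C_1$. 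The value $\tau=25\alpha^3C_0/(144C_1)$ in $s_h$ is designed to cancel it, possibly after adjusting by a harmless constant factor. On $(v,x)$-tensor meshes $\jump{\nv W}_x=0$, so $\tau=0$ suffices.

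\emph{Diffusion terms.} The remaining volume terms are treated exactly as in Lemma \ref{lem:hypoco}, with broken derivatives. This gives the bracketed coefficients with $c_{hc,2}\alpha\norm{\nv^h\nv^TW}{L_2(\mu)}^2$ and $c_{hc,3}\gamma\norm{\nx^h\nv^TW}{L_2(\mu)}^2$, together with the penalty contribution $\sum_j\int_\Gamma\sigma|\sqrt{\mathcal{C}}\jump{\nvx W}_{v_j}|^2\ud\nu$.

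\emph{Main obstacle: the symmetric consistency terms.} What remains is $-2\sum_j\int_\Gamma\av{\nvx W_{v_j}}^T\mathcal{A}\jump{\nvx W}_{v_j}\ud\nu$. First we write $\mathcal{A}=D^{1/2}(D^{-1/2}\mathcal{A})$ with $D={\rm diag}(\alpha I,\gamma I)$, so that $|D^{-1/2}\mathcal{A}\,\cdot|^2=|\sqrt{\mathcal{C}}\,\cdot|^2$; this is exactly why $\mathcal{C}=D^{-1}\mathcal{A}^2$. Cauchy--Schwarz and Young with weight $\sigma$ then bound the term by
\[
\tfrac12\int_\Gamma\sigma|\sqrt{\mathcal{C}}\jump{\nvx W}_{v_j}|^2\ud\nu+2\int_\Gamma\sigma^{-1}|D^{1/2}\av{\nvx W_{v_j}}|^2\ud\nu .
\]
Next we split the average into its two traces and apply Lemma \ref{lem:gen_in_est_final} face by face to the polynomials $\partial_{v_i}W_{v_j}$ and $\partial_{x_i}W_{v_j}$. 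Summing over faces of each element costs the factor $m_T$. This gives
\[
\int_\Gamma\sigma^{-1}|D^{1/2}\av{\nvx W_{v_j}}|^2\ud\nu\le\sum_T\frac{m_TC_{\rm inv}(T,p,k)}{2\min\sigma}\big(\alpha\norm{\nv W_{v_j}}{L_2(\mu_T)}^2+\gamma\norm{\nx W_{v_j}}{L_2(\mu_T)}^2\big).
\]
With the stated $\sigma$, the right-hand side is at most $\tfrac12 c_{hc,2}\alpha\norm{\nv^h\nv^TW}{L_2(\mu)}^2+\tfrac12 c_{hc,3}\gamma\norm{\nx^h\nv^TW}{L_2(\mu)}^2$, which we absorb into the diffusion terms. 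This halves those coefficients and the penalty coefficient, as in the statement.

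The delicate bookkeeping is twofold. The Young weights must be chosen so that the Villani-type face term and the consistency terms are absorbed simultaneously, without degrading $c_{hc,1}$. And the tensor-type inverse constants of Lemma \ref{lem:gen_in_est_final}, with their case distinctions for infinite elements, must be valid for both $v$- and $x$-faces. I expect the exact cancellation of the $\tau$-term to be where constants need rechecking.
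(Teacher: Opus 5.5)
Your proposal follows the paper's proof essentially step for step: the transport face term is cancelled by the first term of $s_h$, the face term from Lemma~\ref{lem:villani_control_discrete} is cancelled by the $\tau$-term, and the consistency terms are absorbed by a $\sigma$-weighted Young inequality plus Lemma~\ref{lem:gen_in_est_final}. Your bookkeeping $|\av{y}|^2\le\tfrac12(|y^+|^2+|y^-|^2)$, $2ab\le 2a^2+\tfrac12 b^2$ yields exactly the halved coefficients, and your doubt about $\tau$ is justified: with $\epsilon_1=(C_1\alpha)^{-1}$, $\epsilon_2=(2C_1\alpha)^{-1}$ the face coefficient is $8C_0^2\cdot\frac{25\alpha^3}{72C_1}=\frac{25\alpha^3C_0^2}{9C_1}$, and $\tau$ must equal it exactly because that term has no sign.

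The one step to repair, which the paper shares, is the identity $|D^{-1/2}\mathcal{A}y|^2=|\sqrt{\mathcal{C}}y|^2$ with $D={\rm diag}(\alpha I,\gamma I)$. In fact $|D^{-1/2}\mathcal{A}y|^2=y^T\mathcal{A}D^{-1}\mathcal{A}y$, whereas $D^{-1}\mathcal{A}^2$ is not symmetric for $\alpha\neq\gamma$; for $\beta=\alpha^2/2$, $\gamma=2\alpha^3/3$ with small $\alpha$ its quadratic form is even indefinite. The penalty matrix should therefore be $\mathcal{C}=\mathcal{A}D^{-1}\mathcal{A}$ for your absorption step to be valid.
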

\begin{proof}
We consider each term of $a_h(W,W)$ separately.
As before, we have
$
((\mathcal{V}\cdot \nvx)  W,W)_{L_2(\mu)}=0.
$
The (lack of higher) regularity of functions in $V_h$ requires partially different treatment compared to the proof of Lemma \ref{lem:hypoco} for the terms involving higher order derivatives. This is because additional terms now appear on the mesh skeleton $\Gamma$, which will eventually be counteracted by the choice of the stabilisation $s_h$. Indeed, we have
\[
\begin{aligned}
 (( \mathcal{V}\cdot \nvx^h )\nvx  W, \mathcal{A}\nvx  W)_{L_2(\mu)}
=&\	\sum_{T\in\mathcal{T}}\int_{\partial T}\frac{\mathcal{V}\cdot \mbf{n}}{2}|\sqrt{\mathcal{A}}\nvx  W|^2\ud \nu\\
= &\	\int_{\Gamma}\av{\nvx  W}^T\mathcal{A}\jump{\nvx  W}\mathcal{V}\ud \nu.
\end{aligned}
\]
In the following we will use Young's inequality exactly as in the proof of Lemma \ref{lem:hypoco} and we use the choices  $\beta=\alpha^2/2$ and $\gamma=2\alpha^3/3$.
The corresponding bounds to \eqref{eq:HV_one} and \eqref{eq:HV_two}, using \eqref{eq:villani_control_discrete} (instead of \eqref{eq:villani_control},)
produce the additional term
\[
+\frac{25\alpha^3C_0}{144C_1}\int_{\Gamma} \av{\nv  W}^T\jump{\nv  W}_x \nabla V\ud \nu,
\]
 whenever $\jump{\nv  W}_x \neq 0$.
 From the above, we conclude
\begin{equation}\label{eq:hypoco_discrete_one}
	\begin{aligned}
		a_h(W,W)\ge
		&\ c_{hc,1}\big( \norm{\nv W}{L_2(\mu)}^2 +\beta \norm{\nx W}{L_2(\mu)}^2\big)	+c_{hc,2}\alpha \norm{\nv^h\nv^T W}{L_2(\mu)}^2 \\
		&\ + c_{hc,3}\gamma\norm{\nx^h\nv^T W}{L_2(\mu)}^2+	\int_{\Gamma}\av{\nvx  W}^T\mathcal{A}\jump{\nvx  W}\mathcal{V}\ud \nu\\
		&\ + \frac{25\alpha^3C_0}{144C_1}\int_{\Gamma} \av{\nv  W}^T\jump{\nv  W}_x \nabla V\ud \nu,
	\end{aligned}
\end{equation}
with $c_{hc,1}=\min\big\{\frac{2}{5}+\frac{4-5C_1}{10}\alpha-\frac{3}{20}\alpha^2-\frac{25}{72}\alpha^3, \frac{1}{6}-\alpha C_1   \big\}$, $c_{hc,2}=1/6$, and $c_{hc,3}=7/240$. Select $\alpha>0$ small enough, so that $c_{hc,1}>0$.

Set $\tau= 25\alpha^3C_0/(144C_1)$, whenever $\jump{\nv  W}_x \neq 0$, or $\tau=0$ otherwise. From the definition of the stabilisation, we have
\[
\begin{aligned}
s_h(W,W)=&\ -	\int_{\Gamma}\av{\nvx  W}^T\mathcal{A}\jump{\nvx  W}\mathcal{V}\ud \nu-\frac{25\alpha^3C_0}{144C_1}\int_{\Gamma} \av{\nv  W}^T\jump{\nv  W}_x \nabla V\ud \nu\\
&-2\sum_{j=1}^d\int_{\Gamma} \av{\nvx^T W_{v_j}}\mathcal{A}\jump{\nvx  W}_{v_j}\ud \nu
+\sum_{j=1}^d\int_{\Gamma}\sigma|\sqrt{\mathcal{C}}\jump{\nvx  W}_{v_j}|^2\ud \nu,
\end{aligned}
\]
noting that, crucially, the first two terms on the right-hand side cancel out with the last two terms on the right-hand side of \eqref{eq:hypoco_discrete_one}.

So it remains to bound the last two terms appearing in the stabilisation. To that end,
recall the definition of matrix $\mathcal{C}$ and observe the identity
\[
\int_{\Gamma} \av{\nvx W_{v_j}}^T\mathcal{A}\jump{\nvx  W}_{v_j}\ud \nu
=
\int_{\Gamma} \av{{\rm diag}(\sqrt{\alpha} I,\sqrt{\gamma} I)\nvx W_{v_j}}^T\sqrt{\mathcal{C}}\jump{\nvx  W}_{v_j}\ud \nu,
\]
for each $j=1,\dots,d$. The right-hand side of the last identity can further bounded, using the trace inverse estimate from  Lemma \ref{lem:gen_in_est_final}, by:
\[
\begin{aligned}
&\Big(\int_{\Gamma} \sigma^{-1}|\av{{\rm diag}(\sqrt{\alpha} I,\sqrt{\gamma} I)\nvx W_{v_j}}^T|^2\ud \nu\Big)^{1/2} \Big(\int_{\Gamma}\sigma |\sqrt{\mathcal{C}}\jump{\nvx  W}_{v_j}|^2\ud \nu\Big)^{1/2}\\
\le& \Big(\frac{c_{hc,2}}{2}\alpha \norm{\nv^h W_{v_j}}{L_2(\mu)}^2  + \frac{c_{hc,3}}{2}\gamma\norm{\nx^h W_{v_j}}{L_2(\mu)}^2\Big)^{1/2} \Big(\int_{\Gamma}\sigma |\sqrt{\mathcal{C}}\jump{\nvx  W}_{v_j}|^2\ud \nu\Big)^{1/2},
\end{aligned}
\]
due to the choice of $\sigma$. Adding now all contributions for $j=1,\dots,d$, we conclude
\[
\begin{aligned}
\sum_{j=1}^d\int_{\Gamma} \av{\nvx W_{v_j}}^T\mathcal{A}\jump{\nvx  W}_{v_j}\ud \nu
\le&\   c_{hc,2}\frac{\alpha }{4}\norm{\nv^h\nv^T W}{L_2(\mu)}^2  +c_{hc,3} \frac{\gamma}{4}\norm{\nx^h\nv^T W}{L_2(\mu)}^2
\\
&+\frac{1}{2} \sum_{j=1}^d\int_{\Gamma}\sigma |\sqrt{\mathcal{C}}\jump{\nvx W}_{v_j}|^2\ud \nu.
\end{aligned}
\]
Combining the above already gives the result.
\end{proof}

	For all $w$ in $H^1(\mu)$, we define the `$\mathcal{A}$-norm' by
	\begin{equation}\label{def:A-norm}
		\Anorm{w}:=\sqrt{\tltwo{w}{w}} = \big(\norm{w}{L_2(\mu)}^2
		+ \norm{\sqrt{\mathcal{A}} \nvx  w}{L_2(\mu)}^2\big)^{1/2}.
	\end{equation}
	Let $\Pi$ denote the $\mathcal{A}$-orthogonal projection $\dip{\Pi u_0}{W} = \dip{u_0}{W}$ for all $W\in V_h$.

 We will also use the spectral gap/Poincar\'e inequality:
 \begin{equation}\label{PF}
  	\norm{ w-\bar{w}}{L_2(\mu)}^2\le C_{PF}	\norm{\nvx w}{L_2(\mu)}^2,
 \end{equation}
  for all $w\in H^1(\mu)$, with $\bar{w}:=\int w\ud \mu$. As discussed in \cite{villani}, (see, in particular, \cite[(7.4)]{villani} and \cite[Theorem A.2]{villani}),) \eqref{PF} holds if $V$ satisfies \eqref{eq:Vgrowth} and if $|\nabla V(x)|\to \infty$ as $|x|\to\infty$, which is the case here. Note that \eqref{PF} holds with $C_{PF}=1$ for Gaussian $V$.

We now prove the decay of the numerical method as $t\to \infty$ in this norm.
\begin{theorem}[Numerical hypocoercivity] \label{theorem: semi discrete}
	Set  $U_0 : = \Pi u_0$. With the assumptions of Lemma \ref{lem:hypoco_discrete}, we have
	\begin{equation}\label{eq:num_hypo}
\Anorm{U(t_f)-\bar{u}}^2
	\le
	\ex^{-\kappa t_f}
\Anorm{U_0-\bar{u}}^2,
	\end{equation}
	with $\kappa = 2c_{hc,1}\min\{  \lambda_{\rm min}C_{PF}^{-1},1\}>0$, and $ \lambda_{\rm min} := \min\{ 1-2 \alpha ,\frac{\alpha^2}{2}(1 -\frac{11\alpha}{6}) \}$.
	\end{theorem}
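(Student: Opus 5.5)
Set $e:=U-\bar u$. The plan is an energy argument in the $\mathcal{A}$-norm, combined with the discrete hypocoercivity of Lemma \ref{lem:hypoco_discrete} and the Poincar\'e inequality \eqref{PF}.

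First, $e$ satisfies the scheme itself. Constants lie in $V_h$ and have vanishing gradients. Hence $\dip{\partial_t\bar u}{W}=0$, and $a_h(\bar u,W)=s_h(\bar u,W)=0$ for all $W\in V_h$, because every term of $a_h$ and $s_h$ involves $\nvx$ of its first argument. (For the first term, $(\mathcal{V}\cdot\nvx)\bar u=0$.) So \eqref{eq:method_semi_discrete} also holds with $U$ replaced by $e$. Testing with $W=e\in V_h$ gives
\[
\frac12\frac{\ud}{\ud t}\Anorm{e}^2 + a_h(e,e)+s_h(e,e)=0 .
\]
Lemma \ref{lem:hypoco_discrete} then yields
\[
\frac12\frac{\ud}{\ud t}\Anorm{e}^2 + c_{hc,1}\big(\norm{\nv e}{L_2(\mu)}^2+\beta\norm{\nx e}{L_2(\mu)}^2\big)
+c_{hc,2}\frac{\alpha}{2}\norm{\nv^h\nv^T e}{L_2(\mu)}^2+c_{hc,3}\frac{\gamma}{2}\norm{\nx^h\nv^T e}{L_2(\mu)}^2\le 0 .
\]
We discard the nonnegative penalty term. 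By Remark \ref{rem:discrete_cons}, $\int e\ud\mu=0$, and the choice $U_0=\Pi u_0$ preserves the mean since $1\in V_h$. Therefore \eqref{PF} applies to $e$ with $\bar e=0$.

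The main step is to bound $\Anorm{e}^2$ from above by the dissipation. We use $\beta=\alpha^2/2$ and $\gamma=2\alpha^3/3$ and write
\[
\norm{\sqrt{\mathcal{A}}\nvx e}{L_2(\mu)}^2=\alpha\norm{\nv e}{}^2+2\beta(\nv e,\nx e)+\gamma\norm{\nx e}{}^2 .
\]
Young's inequality on the cross term gives $\norm{\sqrt{\mathcal{A}}\nvx e}{}^2\le(\alpha+\beta)\norm{\nv e}{}^2+(\beta+\gamma)\norm{\nx e}{}^2$. This is small compared with $\norm{\nv e}{}^2+\beta\norm{\nx e}{}^2$ once $\alpha$ is small, since $\gamma/\beta=4\alpha/3$.

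I would arrange the constants as follows. The quantity $\lambda_{\min}$ should arise from rewriting the dissipation $\norm{\nv e}{}^2+\beta\norm{\nx e}{}^2$ as
\[
\lambda_{\min}\norm{\nvx e}{}^2+\norm{\sqrt{\mathcal A}\nvx e}{}^2
\]
up to nonnegative remainders. The bound $1-\alpha-\beta\ge 1-2\alpha$ covers the $v$-part. For the $x$-part, $\beta-\beta-\gamma$ is negative, so more care is needed. A weighted Young split with parameter $\alpha$ should produce the stated $\frac{\alpha^2}{2}(1-\frac{11\alpha}{6})$: take $2\beta|(\nv e,\nx e)|\le \alpha\beta\norm{\nv e}{}^2+\alpha^{-1}\beta\norm{\nx e}{}^2$ and redistribute. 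I expect this bookkeeping, namely matching $\lambda_{\min}$ exactly, to be the main obstacle. If the exact constant does not come out, I would accept any positive $\lambda_{\min}$ valid for small $\alpha$.

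With the splitting in hand, \eqref{PF} gives
\[
\lambda_{\min}\norm{\nvx e}{}^2\ge \lambda_{\min}C_{PF}^{-1}\norm{e}{L_2(\mu)}^2 .
\]
Hence the dissipation is at least $c_{hc,1}\min\{\lambda_{\min}C_{PF}^{-1},1\}\Anorm{e}^2$. This leads to
\[
\frac{\ud}{\ud t}\Anorm{e}^2+\kappa\Anorm{e}^2\le0 .
\]
Gr\"onwall's inequality then gives \eqref{eq:num_hypo}, since $e(0)=U_0-\bar u$.
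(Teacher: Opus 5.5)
Your route is the paper's. You use the energy identity for $e=U-\bar u$ and Lemma~\ref{lem:hypoco_discrete}, then split $\|\nv e\|^2+\beta\|\nx e\|^2=\|\sqrt{\mathcal D-\mathcal A}\,\nvx e\|^2+\|\sqrt{\mathcal A}\,\nvx e\|^2$ with $\mathcal D={\rm diag}(I,\beta I)$, and finish with \eqref{PF} and Gr\"onwall. All norms and inner products here are in $L_2(\mu)$. The preliminary steps ($e$ solves the scheme, $\Pi$ preserves the mean) and the final Gr\"onwall step are fine.

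The gap is the only step with real content. You never establish
\[
(1-\alpha)\|\nv e\|^2-2\beta(\nv e,\nx e)+(\beta-\gamma)\|\nx e\|^2\ge\lambda_{\rm min}\|\nvx e\|^2 ,
\]
and the weighting you suggest does not give it. With $2\beta|(\nv e,\nx e)|\le\alpha\beta\|\nv e\|^2+\alpha^{-1}\beta\|\nx e\|^2$, the coefficient of $\|\nx e\|^2$ becomes $\beta-\gamma-\beta/\alpha=\frac{\alpha^2}{2}-\frac{2\alpha^3}{3}-\frac{\alpha}{2}$. This is negative for every $\alpha>0$, just as your symmetric Young bound leaves $-\gamma$. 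Your fallback of accepting ``any positive $\lambda_{\rm min}$'' does not rescue this. The existence of such a constant is exactly what has to be proved, and $\kappa$ in the statement is defined through the specific $\lambda_{\rm min}$.

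The weight must go the other way round. The $v$-coefficient $1-\alpha$ has room of order one, whereas the $x$-coefficient $\beta-\gamma$ has room only of order $\beta=\alpha^2/2$. So split the cross term as $2\beta|(\nv e,\nx e)|\le\alpha\|\nv e\|^2+\frac{\beta^2}{\alpha}\|\nx e\|^2$. This yields the lower bound $(1-2\alpha)\|\nv e\|^2+(\beta-\gamma-\beta^2/\alpha)\|\nx e\|^2$. With $\beta=\alpha^2/2$ and $\gamma=2\alpha^3/3$,
\[
\beta-\gamma-\frac{\beta^2}{\alpha}=\frac{\alpha^2}{2}-\frac{2\alpha^3}{3}-\frac{\alpha^3}{4}=\frac{\alpha^2}{2}\Big(1-\frac{11\alpha}{6}\Big),
\]
which is exactly the paper's $\lambda_{\rm min}$, positive for $\alpha<1/2$. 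With this one line inserted, the rest of your argument goes through as written and coincides with the paper's proof.
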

\begin{proof}
Let $U\in V_h$ be the solution of \eqref{eq:method_semi_discrete}. From Lemma \ref{lem:hypoco_discrete}, we have
\[
	a_h(U,U)+s_h(U,U)\ge c_{hc,1}\big( \norm{\nv U}{L_2(\mu)}^2 +\beta \norm{\nx U}{L_2(\mu)}^2\big) = c_{hc,1}\norm{\sqrt{\mathcal{D}} \nvx  U}{L_2(\mu)}^2,
\]
for the matrix $\mathcal{D}:={\rm diag}( I, \beta I) $. We consider the decomposition
$$
\norm{\sqrt{\mathcal{D}} \nvx  U}{L_2(\mu)}^2 = \norm{\sqrt{\mathcal{D-A}} \nvx  U}{L_2(\mu)}^2 + \norm{\sqrt{\mathcal{A}} \nvx  U}{L_2(\mu)}^2.
$$
Now,
\[
\begin{aligned}
	 \norm{\sqrt{\mathcal{D-A}} \nvx  U}{L_2(\mu)}^2 \!=& (1-\alpha)\norm{\nv U}{L_2(\mu)}^2\!
	 -\!2\beta (\nv U,\nx U)_{L_2(\mu)}
	 \! +\! (\beta-\gamma)\norm{\nx U}{L_2(\mu)}^2\\
	 \geq & \	(1-2 \alpha)\norm{\nv U}{L_2(\mu)}^2
		  + (\beta-\gamma -\beta^2/\alpha)\norm{\nx U}{L_2(\mu)}^2 \\
		  \ge&\  \lambda_{\rm min} \norm{\nvx  U}{L_2(\mu)}^2,
\end{aligned}
\]
by Cauchy-Schwarz and Young's inequalities, and substituting $\beta=\alpha^2/2$, $\gamma=2\alpha^3/3$,
with $ \lambda_{\rm min}= \min\{ 1-2 \alpha ,\frac{\alpha^2}{2}(1 -\frac{11\alpha}{6}) \}$. Thus, using \eqref{PF}, and recalling \eqref{eq:mass_cons}, gives
\begin{align}\label{Poncare inequality}
	a_h(U,U)+s_h(U,U)\ge&\ c_{hc,1}\big( \lambda_{\rm min}\norm{\nvx  U}{L_2(\mu)}^2 +  \norm{\sqrt{\mathcal{A}} \nvx  U}{L_2(\mu)}^2\big) \nonumber \\
\ge	&	\ c_{hc}\big( \lambda_{\rm min}C_{PF}^{-1} \norm{ U - \bar{u}}{L_2(\mu)}^2 +  \norm{\sqrt{\mathcal{A}} \nvx  U}{L_2(\mu)}^2\big)
\ge		 \frac{\kappa}{2} \Anorm{U-\bar{u}}^2.
\end{align}
Recalling Remark \ref{rem:discrete_cons},  for $\eta\in V_h$, we observe the identity $a_h(U,\eta)+s_h(U,\eta) = a_h(U-\bar{u},\eta)+s_h(U-\bar{u},\eta)$. Thus, setting $\eta = U-\bar{u}$ for brevity, \eqref{eq:method_semi_discrete} implies
\begin{align*}
 0 =& \frac{1}{2}\frac{d}{dt}\Anorm{ \eta}^2+a_h(\eta,\eta)+s_h(\eta,\eta)
\geq
\frac{1}{2}\frac{d}{dt}\Anorm{\eta}^2
+ \frac{\kappa}{2} \Anorm{\eta}^2.
\end{align*}
The result follows by Gr\"onwall's Lemma,
and the identity $U_0 = \Pi u_0=\bar{u}$.
\end{proof}

\section{A fully discrete scheme} \label{sec: A fully discrete scheme}
We now further equip the above with a variable order ($hp$-version) discontinuous Galerkin (dG) time-stepping for the discretization of the temporal derivatives, thereby arriving at a fully discrete scheme. $hp$-dG timestepping is preferred to effectively capture possible singularities in the neighbourhood of the initial time \cite{schotzau_schwab}.

To that end, we consider a subdivision $\{I_n \}_{n=1}^{N_t}$ of the time interval $(0,t_f]$, subordinate to the time nodes $0=: t_0 <t_1<\dots <t_{N_t}:=t_f$, with  $I_n = (t_{n-1},t_n]$.
  Let  also $\tmesh:=t_{n} - t_{n-1}$. To each $I_n$ we associate the local polynomial degree $q_n\in \mathbb{N}\cup\{0\}$ of the temporal basis functions, and we collect them in the vector $\textbf{q}=(q_1,\dots,q_N)$. On each $I_n$, we define the local \emph{space-time finite element space} by
\[
\stnfes{n}:=\{ u\in L_2(I_n;H^1(\mu)): u|_{I_n}\in \mathcal{P}_{q_n}(I_n)\times V_h, \ n=1,\dots,N_t \},
\]
with $\mathcal{P}_{q}(I)$ denoting the space of univariate polynomials of degree at most $q$ on the interval $I$. The global  space-time finite element space is then given by $\stfes = \bigoplus_{n=1}^{N_t} \stnfes{n}$. Functions in $\stfes$ are allowed to be discontinuous at the time nodes $\{t_n\}_{n=1}^{N_t-1}$. Defining
$
u_n^\pm := \lim_{s\rightarrow 0^+} u(t_n\pm s)$, $  n=0,1,\dots, N_t$,
the \emph{time-jump} across $t_{n}$ is given by
$\ujump{u}_{n}  := u_{n}^+-u_{n}^-$, for $n=1,\dots, N_t$ and $\ujump{u}_{0}  := u_{n}^+$. The fully discrete scheme then reads: find $U\in \stfes$, such that
\begin{equation}\label{eq:method_full_discrete}
B(U,W)=\tltwo{u_{0}}{W^+_{0}} , \quad\text{ for all }W\in \stfes,
\end{equation}
with  $u_0 = u(0,v,x)$ and $B:\stfes \times \stfes \to \mathbb{R}$ given by
\[
B(U,W) := \sum_{n=1}^{N_t}\! \bigg(\int_{I_n} \Big( \tltwo{U_t}{W}  + a_h(U,W) +s_h(U,W)\Big)  \ud  t +
\tltwo{\ujump{U}_{n-1}}{W^+_{n-1}}\bigg).
\]

Of course, in practice, the method solves for each time interval separately: for $n=1,\dots,N_t$, $U|_{I_n}\in \stnfes{n}$ is computed by
\begin{equation}\label{local-bilinear}
 \int_{I_n}  \Big( \tltwo{U_t}{W}  + a_h(U,W) +s_h(U,W)\Big)  \ud  t
+
 \tltwo{U^+_{n-1}}{W^+_{n-1}}
 =  \tltwo{U^-_{n-1}}{W^+_{n-1}},
\end{equation}
for all $W\in \stnfes{n}$,
with $U_{n-1}^-$  the initial datum at time step $I_n$; for $n=1$  set $U_0^- = u_0$.

We now present a result, corresponding to Theorem \ref{theorem: semi discrete} for the fully discrete scheme \eqref{eq:method_full_discrete}. They key underlying mathematical challenge to be addressed is that  \eqref{eq:method_full_discrete} is defined in integral form and, thus, Gr\"onwall's Lemma with negative exponents does not apply.

The lack of point-wise error control in the $t$ variable of $U$ solving \eqref{eq:method_full_discrete} is counteracted by an inverse inequality.

\begin{theorem}[Numerical hypocoercivity via inverse inequality]
	With the assumptions and notation of Lemma \ref{lem:hypoco_discrete}, the solution $U$ to \eqref{eq:method_full_discrete} satisfies
	\begin{equation}\label{eq:num_hypo fully discrete}
	\Anorm{U(t_f)-\bar{u}}^2
	\le
\prod_{n=1}^{N_t}\Big(1+ \frac{\kappa \tau_n}{(q_n +1)^2}\Big)^{-1}
	\Anorm{U_0-\bar{u}}^2.
	\end{equation}
	\end{theorem}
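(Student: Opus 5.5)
We work one time slab at a time and then iterate. Set $\eta:=U-\bar{u}$. Since $V_h$ contains constants, testing \eqref{local-bilinear} with $W=1$ (constant in time) shows that the mass $\int U\ud\mu$ is preserved across every slab, exactly as in Remark \ref{rem:discrete_cons}. Because $a_h$ and $s_h$ vanish when either argument is constant, $\eta$ satisfies the same local problem \eqref{local-bilinear} with data $\eta_{n-1}^-$. The goal is the one-step bound
\[
\Big(1+\frac{\kappa\tau_n}{(q_n+1)^2}\Big)\Anorm{\eta_n^-}^2\le \Anorm{\eta_{n-1}^-}^2 .
\]
Taking the product over $n=1,\dots,N_t$ then gives \eqref{eq:num_hypo fully discrete}, with $U_0$ read as the initial datum $\Pi u_0$, or $u_0$ itself, whose mean is also $\bar{u}$.

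For the one-step bound, test \eqref{local-bilinear} with $W=\eta|_{I_n}$. Since $\tltwo{\cdot}{\cdot}$ is time-independent, we have $\int_{I_n}\tltwo{\eta_t}{\eta}\ud t=\tfrac12\Anorm{\eta_n^-}^2-\tfrac12\Anorm{\eta_{n-1}^+}^2$. The standard dG identity then gives
\[
\tfrac12\Anorm{\eta_n^-}^2+\tfrac12\Anorm{\eta_{n-1}^+-\eta_{n-1}^-}^2+\int_{I_n}\big(a_h(\eta,\eta)+s_h(\eta,\eta)\big)\ud t=\tfrac12\Anorm{\eta_{n-1}^-}^2 .
\]
Lemma \ref{lem:hypoco_discrete} combined with the estimate \eqref{Poncare inequality} from the proof of Theorem \ref{theorem: semi discrete} holds pointwise in $t$, since $\eta(t)$ has zero mean. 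It therefore yields
\[
\int_{I_n}\big(a_h+s_h\big)(\eta,\eta)\ud t\ge \frac{\kappa}{2}\int_{I_n}\Anorm{\eta}^2\ud t .
\]

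The main obstacle is to bound $\int_{I_n}\Anorm{\eta}^2\ud t$ from below by a multiple of $\Anorm{\eta_n^-}^2$. Here we use the one-dimensional polynomial trace inverse inequality (cf.~\cite{warburton2003constants}), $|s(t_n)|^2\le (q+1)^2\tau_n^{-1}\int_{I_n}|s|^2\ud t$ for $s\in\mathcal{P}_q(I_n)$. We apply it in a Hilbert-valued form: expand $\eta|_{I_n}$ in a $\tltwo{\cdot}{\cdot}$-orthonormal basis of $V_h$, so that each coefficient is a polynomial of degree $q_n$ in $t$, and sum over the coefficients. This gives
\[
\int_{I_n}\Anorm{\eta}^2\ud t\ge \frac{\tau_n}{(q_n+1)^2}\Anorm{\eta_n^-}^2 .
\]
Inserting this and dropping the nonnegative jump term, we obtain
\[
\Big(1+\frac{\kappa\tau_n}{(q_n+1)^2}\Big)\Anorm{\eta_n^-}^2\le\Anorm{\eta_{n-1}^-}^2,
\]
which is the one-step bound. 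Care is needed on two points: that the endpoint inverse constant is exactly $(q+1)^2/\tau_n$, and that the zero-mean property holds for every $t\in I_n$ and not only at the nodes. The latter follows because the slab-wise mass test with $W=\varphi(t)\cdot 1$, for all $\varphi\in\mathcal{P}_{q_n}(I_n)$, forces $\int U(t)\ud\mu$ to be constant in $t$ on each slab.
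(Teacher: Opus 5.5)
Your proposal is correct and follows the paper's proof essentially step for step. It uses the same ingredients in the same order: the dG energy identity for $\eta=U-\bar{u}$ on each slab, the coercivity-plus-Poincar\'e lower bound $a_h(\eta,\eta)+s_h(\eta,\eta)\ge\frac{\kappa}{2}\Anorm{\eta}^2$ from the semidiscrete proof, the $hp$ trace inverse inequality $\tau_n\Anorm{\eta_n^-}^2\le (q_n+1)^2\int_{I_n}\Anorm{\eta}^2\ud t$, and iteration over the slabs. The extra points you supply are left implicit in the paper, and you handle them correctly: the Hilbert-valued inverse inequality via a $\tltwo{\cdot}{\cdot}$-orthonormal basis of $V_h$, and the slab-wise dG argument that the mean of $U(t)$ is constant for every $t\in I_n$, not only at the nodes.
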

\begin{proof}
Set $\eta=U-\bar{u}$. Since $a_h(U,\eta)+s_h(U,\eta) = a_h(\eta,\eta)+s_h(\eta,\eta)$, \eqref{local-bilinear} implies
\begin{equation*}
\begin{split}
 \int_{I_n}  \Big(  \frac{1}{2} \frac{d}{dt}\Anorm{\eta}^2+ a_h(\eta,\eta) +s_h(\eta,\eta)\Big)  \ud  t
+ \tltwo{\ujump{\eta}_{n-1}  }{\eta^+_{n-1}}
 =  0.
\end{split}
\end{equation*}
Hence,
\begin{equation*}
\frac{1}{2} \Anorm{\eta_n^-}^2+ \frac{1}{2} \Anorm{\eta_{n-1}^+}^2- \tltwo{\eta^-_{n-1}  }{\eta^+_{n-1} }
+ \int_{I_n}  \big( a_h(\eta,\eta) +s_h(\eta,\eta)\big)  \ud  t =0.
\end{equation*}
Since $ \frac{1}{2} \Anorm{\eta_{n-1}^+}^2- \tltwo{\eta^-_{n-1}  }{\eta^+_{n-1} } = \frac{1}{2} \Anorm{\ujump{\eta}_{n-1}}^2- \frac{1}{2} \Anorm{\eta_{n-1}^-}^2$, the above gives
\begin{equation*}
\Anorm{\eta_n^-}^2 +  \Anorm{\ujump{\eta}_{n-1}}^2\\
+ 2 \int_{I_n}  \big( a_h(\eta,\eta) +s_h(\eta,\eta)\big)  \ud  t
= \Anorm{\eta_{n-1}^-}^2.
\end{equation*}
Following the proof in Theorem \ref{theorem: semi discrete}, we deduce
\begin{equation}\label{relation 1}
\begin{split}
 \Anorm{\eta_n^-}^2 +\Anorm{\ujump{\eta}_{n-1}}^2
+  \kappa \int_{I_n}  \Anorm{U-\bar{u}}^2 \ud  t
\leq \Anorm{\eta_{n-1}^-}^2.
\end{split}
\end{equation}
Employing the classical $hp$-version trace inverse inequality in the $t$ variable, we have
$
\tau_n \Anorm{\eta_n^-}^2  \leq (q_n+1)^2 \int_{I_n}  \Anorm{\eta}^2 \ud  t,
$
which upon substitution into \eqref{relation 1} gives
$$
\Big(1+\frac{\kappa\tau_n}{(q_n+1)^2}\Big) \Anorm{\eta_n^-}^2  \leq    \Anorm{\eta_{n-1}^-}^2.
$$
The result follows by repeated application of the last bound for $n=1,\dots, N_t$.
\end{proof}
\begin{remark}[Asymptotic bound]
Assuming constant $\tau_n=\tau$ and $q_n=q$ for all  $n=1,\dots,N_t$, we deduce
\[
	\Anorm{U(t_f)-\bar{u}}^2
\leq
\Big(1+ \frac{\kappa \tau}{(q +1)^2}\Big)^{-t_f/\tau}	\Anorm{U_0-\bar{u}}^2
\rightarrow
e^{- \kappa  t_f/(q +1)^2}	\Anorm{U_0-\bar{u}}^2,
\]
as $\tau\rightarrow0$. Comparing with \eqref{eq:num_hypo}, the exponent coincides with that of the semidiscrete setting for $q=0$. The inverse estimate used results into a reduction of the exponent for higher order. Nevertheless, the exponential convergence to equilibrium is established for all $q$.
\end{remark}

\section{Numerical examples}\label{sec: Numerical examples}
We consider three numerical experiments to investigate the practical performance of the proposed methods. In the first two examples, we demonstrate the exponential decay of the $A$-norm for the (homogeneous) problem \eqref{eq:num_hypo fully discrete}. In the third example, we study the convergence rate of the error measured in the $A$-norm.

	Throughout this section, we choose
	$
	E(v,x):=\frac{1}{2}|v|^2 +\frac{1}{2}|x|^2
	$.
	For this choice, we have $C_1 = 16$, and we set $\alpha = 1/192$ throughout. Consequently,
	$
	c_{hc,1} = 1/12$.
In all three examples, we consider the domain $\Omega:=(-5,5)^2$, as the  partitioned into a sequence of uniform triangular `$(v,x)$-unstructured' meshes consisting of $32$, $128$, $512$, $2048$, and $8192$ elements. This is extended to the whole of the domain notionally by `infinite' elements as per Figure \ref{fig:meshes} (left panel). However,  for this choice of domain we have $\ex^{-E(v,x)}\le 3.73\times 10^{-6}$, reaching  $\ex^{-E(v,x)}\approx1.39\times 10^{-11}$ at the corner points $(\pm5, \pm5)$,
and so the contributions of the `infinite' elements are negligible.
The temporal polynomial degree $q$ and the spatial polynomial degree $p$ are chosen to be equal throughout.

\subsection{Example 1. Convergence to equilibrium for smooth initial condition}
For the first experiment, we set $t_f = 40$ and choose $\tau = 4\sqrt{2}\,h$.
The initial condition is chosen as
$
u_0(v,x) = \sin(\pi v)\sin(\pi x)$.

We plot the quantity $\Anorm{U(t_n^-)-\bar{u}}$ at the right endpoints of the time intervals, together with its value at the initial time.
 The corresponding results are shown in Figure~\ref{fig:convergenceToEquilibrium_sin} (left panel): the numerical solution, measured in the $\Anorm{U(t_n^-)-\bar{u}}$-norm (vertical axis) is drawn against $t$ (horizontal axis). We observe exponential convergence to the equilibrium state.
\begin{figure}\centering
	\includegraphics[width = .385\linewidth]{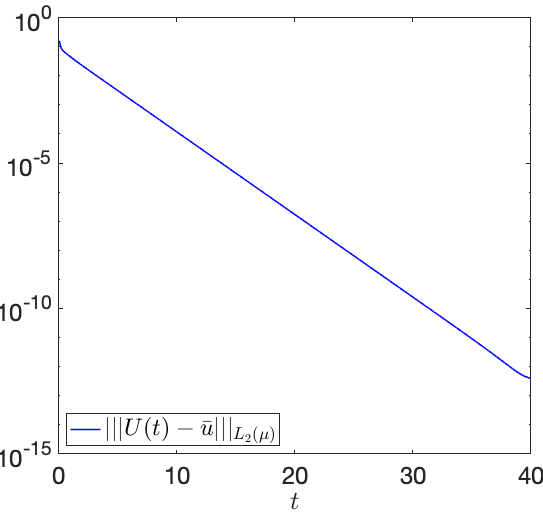}\hspace{1cm}
		\includegraphics[width = .4\linewidth]{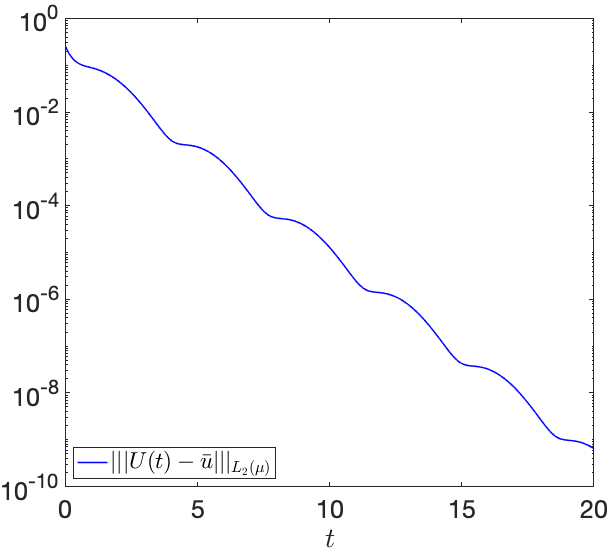}
	\caption{Convergence to equilibrium for $p=2$, $q=2$ with $8192$ triangular elements: Example 1 (left panel) and Example 2 (right panel)}\label{fig:convergenceToEquilibrium_sin}
\end{figure}

\subsection{Convergence to equilibrium for non-smooth initial condition}
For the second experiment, we set $t_f = 20$ and choose $\tau = 2\sqrt{2}\,h$. The initial condition is chosen as
$u_0(v,x) = 1 - \tanh\!\big(\sqrt{v^2+x^2}\big)$. The corresponding results are shown in Figure~\ref{fig:convergenceToEquilibrium_sin} (right panel). We observe that the numerical solution converges exponentially fast to the equilibrium state in the $\Anorm{\cdot}$-norm.

In addition, Figure~\ref{fig:numerical lolution at different time steps} displays the numerical solution at $t=0$, $1$, $5$, and $10$. As time evolves, the solution becomes increasingly uniform and converges towards a constant equilibrium state, in agreement with the exponential decay observed in the $\Anorm{\cdot}$-norm.
\begin{figure}\centering
	\begin{tabular}{cc}
		\includegraphics[width = .45\linewidth]{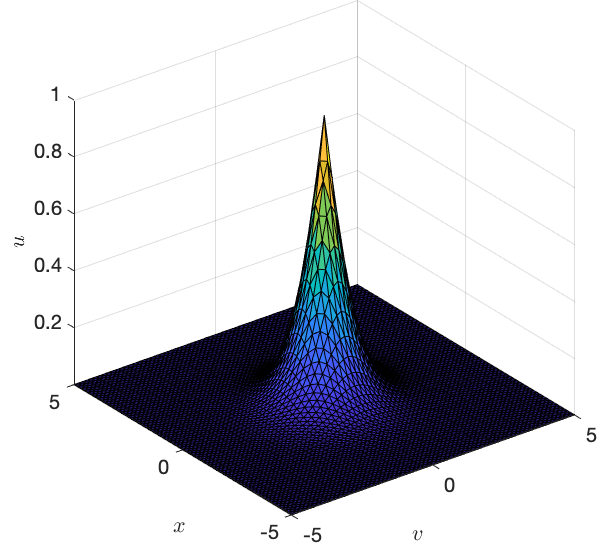}&
		\includegraphics[width = .45\linewidth]{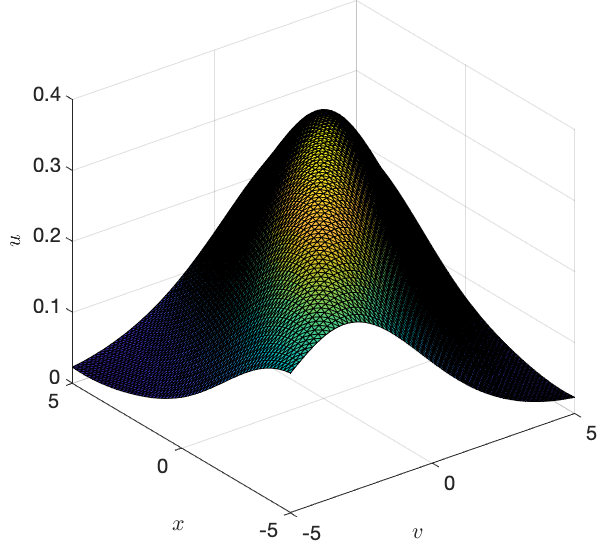}\\
		$t=0$ &
		$t=1$\\
		\includegraphics[width = .45\linewidth]{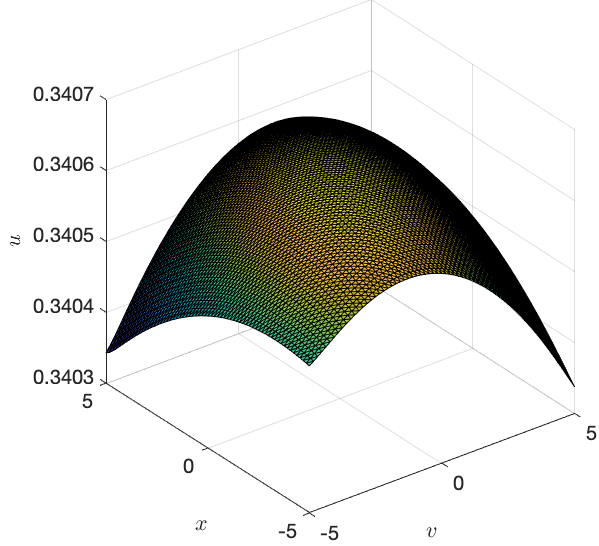}&
		\includegraphics[width = .45\linewidth]{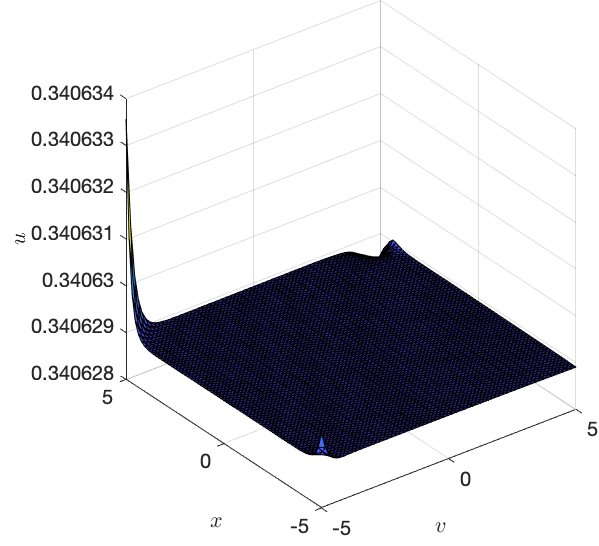}\\
		$t=5$ &
		$t=10$\\
	\end{tabular}
	\caption{Example 2. Numerical solution for $8192$ triangular elements with $p=q=2$.}\label{fig:numerical lolution at different time steps}
\end{figure}

\subsection{Convergence rate study}
For the third experiment, we study the convergence rate behaviour of the method \eqref{eq:method_full_discrete}. We set $t_f = 1$ and choose $\tau = \sqrt{2}\,h/20$. We measure the error in the $L^\infty(0,t_f;\Anorm{\cdot})$-norm. We consider the manufactured solution
$u(v,x,t) = \sin(\pi v)\sin(\pi x)\sin(\pi t)$.

The results are presented in Figure~\ref{fig:error_smooth}. We observe that the $L^\infty(0,t_f;\Anorm{\cdot})$-norm error converges at the rate $\mathcal{O}(h^{p-1})$ for $p=q=2,3,4$. Establishing rigorous error analysis proving these convergence rate remains an interesting open problem and will be investigated elsewhere.
\begin{figure}\centering
	\includegraphics[width = .5\linewidth]{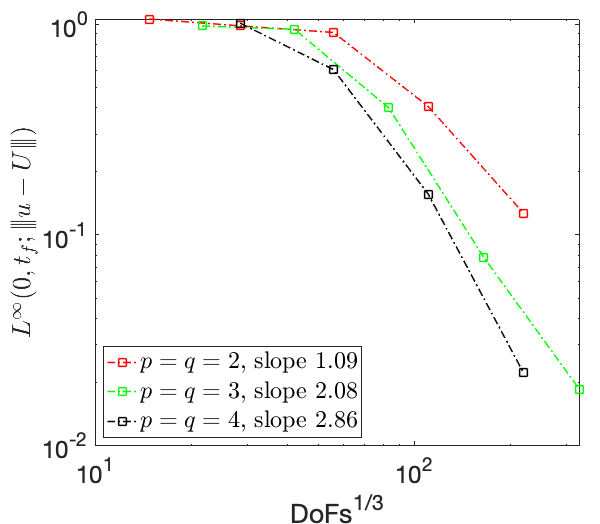}
	\caption{Convergence rate for a sequence of meshes with $p=q=2,3,4$.}\label{fig:error_smooth}
\end{figure}

\section{Concluding remarks}
The above class of hypocoercivity-preserving fully discrete Galerkin methods for the (inhomogeneous) kinetic Fokker--Planck (kFP) equations allows for extremely general unstructured mesh scenarios, thereby facilitating mesh and order adaptivity frameworks in the context of kinetic equations. The above proof of exponential convergence to the equilibrium state is oblivious the specificities of the meshes used and, in general, of the Galerkin spaces, as long as the produce a globally continuous, element-wise polynomial approximation and contain the global constant function to allow for mass conservation. An interesting direction of further research is to the proof of long-time robust error bounds for the proposed method. This will be considered elsewhere.

	\section*{Acknowledgements}
EHG wishes to acknowledge the financial support of EPSRC (grant number EP/W005840/2).
	\bibliographystyle{siam}
	\bibliography{bib.bib}
\end{document}